\begin{document}
\newtheorem{thm}{Theorem}[section]
\newtheorem{lemma}[thm]{Lemma}
\newtheorem{proposition}[thm]{Proposition}
\newtheorem{corollary}[thm]{Corollary}
\newtheorem{conjecture}[thm]{Conjecture}

\theoremstyle{definition}
\newtheorem{defn}[thm]{Definition}

\newtheorem{remark}[thm]{Remark}
\newtheorem{remarks}[thm]{Remarks}
\newtheorem*{notation}{Main notation}

\newcommand{\arctg}{\operatorname{arctg}}
\newcommand{\sech}{\operatorname{sech}}
\newcommand{\ve}{\varepsilon}

\newcommand{\todoin}{\todo[inline]}

\numberwithin{equation}{section}

\title[Measure of instability of twist maps]{A new measure of instability and topological entropy of area-preserving twist diffeomorphisms}

\author{Sini\v{s}a Slijep\v{c}evi\'{c}, Zagreb}

\date{March 3rd, 2017}

\begin{abstract}
We introduce a new measure of instability of area-preserving twist diffeomorphisms, which generalizes the notions of angle of splitting of separatrices, and flux through a gap of a Cantori. As an example of application, we establish a sharp $>0$ lower bound on the topological entropy in a neighbourhood of a hyperbolic, unique action-minimizing fixed point, assuming only no topological obstruction to diffusion, i.e. no homotopically non-trivial invariant circle consisting of orbits with the rotation number 0. The proof is based on a new method of precise construction of positive entropy invariant measures, applicable to more general Lagrangian systems, also in higher degrees of freedom. 
 
\end{abstract}
 
\maketitle


\section{Introduction}

The study of instabilities of Hamiltonian dynamical systems is one of the central themes of the theory of dynamical systems (see \cite{Bernard:10,Bernard:11,Cheng:10,Gelfreich:14,Kaloshin:15,Llave:06} for an overview and further references). In \cite{Slijepcevic:16}, we proposed a novel approach to Arnold's diffusion and construction of positive entropy invariant measures, by using recently developed techniques for study of dissipative partial differential equations \cite{Gallay:01,Gallay:12,Gallay:14,Gallay:15,Slijepcevic:13}. The goal of this paper is to apply these ideas to the simplest non-trivial example: a neighbourhood of a hyperbolic fixed point of an area-preserving twist diffeomorphism of the cylinder. We in particular introduce a new measure of instability, and show how it relates to two established notions: the flux through a gap in a Cantori \cite{MacKay:88}, and the angle of splitting of separatrices (\cite{Gelfreich:99} and references therein).

Recall \cite{Katok:95,MacKay:93} that we can define an area-preserving twist diffeomorphism as a map $f: (x,p) \mapsto (x',p')$, $f: \mathbb{S}^1 \times \mathbb{R} \rightarrow \mathbb{S}^1 \times \mathbb{R}$, and $(x',p')$ is uniquely defined by the following implicit equations:
\begin{eqnarray}
p &=& -V_1(x,x'), \nonumber \\
p' &=& V_2(x,x'). \nonumber
\end{eqnarray}
Here indices denote partial derivatives, and $V:\mathbb{R}^2 \rightarrow \mathbb{R}$ is the {\it generating function} satisfying for all $(x,x') \in \mathbb{R}^2$
\begin{itemize}
	\item[(A1)] $V$ is $C^2$, $V(x+1,x'+1)=V(x,x')$.
	
	\vspace{1ex}
	
	\item[(A2)] The twist condition: there exists $\delta >0$ so that $V_{12}(x,x')\leq -\delta$.
	
	\vspace{1ex}
	
	\item[(A3)] The second derivatives $V_{11}(x,x')$, $V_{12}(x,x')$, $V_{22}(x,x')$ are uniformly bounded and uniformly continuous on $(x,x')\in \mathbb{R}^2$.
\end{itemize}
(The condition (A3) appears to be somewhat more restrictive than the usual condition of convergence at infinity of $\lim_{|y|\rightarrow \infty}V(x,x+y)=\infty$, uniformly in $x$ \cite{Katok:95,MacKay:93}; however it is chosen purely for convenience and not an essential restriction, see Section 9.1.)
A typical example is the Standard (or Chirikov-Taylor) map given with $V(x,x')=(x'-x)^2/2 - k\cos(2 \pi x)/2\pi$, thus
\begin{eqnarray}
x' & = & x+p, \nonumber \\
p' & = & x+p + k\sin(2\pi x). \nonumber
\end{eqnarray}

We will require the following assumptions:
\begin{itemize}
	\item[(N1)] $y_0$ is the unique (mod 1) minimum of $x \mapsto V(x,x)$, and $(y_0,p_0)$, $p_0=-V_1(y_0,y_0)$ is a hyperbolic equilibrium of $f$.
	
	\vspace{1ex}
	\item[(N2)] There exists no homotopically non-trivial invariant circle consisting of orbits with the rotation number $0$. 
	
\end{itemize}
The definition of the rotation number is recalled in Section 2. The assumption (N1) is actually not needed, and chosen for the simplicity of the presentation (Section 9.2). The assumption (N2) is necessary, as shown by the time-one map of the mathematical pendulum, for which the results below do not hold. (N2) can be interpreted as the condition of no topological obstruction to diffusion and transport. (N1) and (N2) hold for the Standard map whenever $k \neq 0$ (Section 9.3).

Prior to stating the main results, we introduce the new notion of instability. Let $(z_0,p)$ be minimizing, homoclinic and $(\tilde{z}_0,\tilde{p})$ be the minimax homoclinic of $(y_0,p_0)$  (see Section \ref{s:preliminaries} for a precise definition of this and the forthcoming terms). Denote by $F$ a lift of $f$ on $\mathbb{R}^2$, by $\pi_1 : \mathbb{R}^2 \rightarrow \mathbb{R}$ the projection to the first coordinate, and let $\boldsymbol{z}=(z_k)_{k \in \mathbb{Z}}$, $\tilde{\boldsymbol{z}}=(\tilde{z}_k)_{k}$, $z_k=\pi_1(F^k(z_0,p))$, $\tilde{z}_k=\pi_1(F^k(\tilde{z}_0,\tilde{p}))$ be the {\it configurations} in $\mathbb{R}^{\mathbb{Z}}$ corresponding to the first coordinates of the orbits of $(z,p)$, $(\tilde{z},\tilde{p})$. We can always without loss of generality choose $F$, $(z_0,p)$ and $(\tilde{z}_0,\tilde{p})$ (see Lemma \ref{l:ordered} and the following paragraph) so that 
\begin{equation}
0 < \tilde{z}_0 < z_0 < \tilde{z}_1 < 1.
\end{equation}
Let $\Delta_0$ be the {\it Peierls barrier} (see Section 9.4), defined as
\begin{align*}
\Delta_0  =  \sum_{k=-\infty}^{\infty}(V(\tilde{z}_k,\tilde{z}_{k+1})-V(z_k,z_{k+1})).
\end{align*}
We recall in Section \ref{s:preliminaries} that (N1), (N2) imply $\Delta_0 > 0$, while always $\Delta_0 < \infty$ and the sum is absolutely convergent. We note that $\Delta_0$ is an analogue of the flux through a gap in a Cantori, introduced in \cite{MacKay:88} as a measure of instability and transport (see Section 9.5). We now introduce the new measures of instability:
\begin{equation}
\Delta_1 = \sup_{0 \leq e \leq \Delta_0/2} \inf_{\boldsymbol{h} \in \mathcal{N}(e)} ||\nabla E(\boldsymbol{h})||_{l^2(\mathbb{Z})}^2, \hspace{5ex} 
\tilde{\Delta}_1 = \sup_{0 \leq e \leq \Delta_0/2} \inf_{\boldsymbol{h} \in \tilde{\mathcal{N}}(e)} ||\nabla E(\boldsymbol{h})||_{l^2(\mathbb{Z})}^2, \label{d:Delta1}
 \end{equation}
where $\boldsymbol{h} \in l^2(\mathbb{Z})$, 
\begin{align}
 E(\boldsymbol{h}) & =\sum_{k=-\infty}^{\infty} (V(z_k+h_k,z_{k+1}+h_{k+1})-V(z_k,z_{k+1})), \label{d:e} \\
 \nabla E(\boldsymbol{h})_k & = V_2(z_{k-1}+h_{k-1},z_k+h_k)+V_1(z_k+h_k,z_{k+1}+h_{k+1}), \label{d:nablae}
\end{align}
and $\mathcal{N}(e)\subseteq \tilde{\mathcal{N}}(e)$ are suitable subsets of $\lbrace \boldsymbol{h} \in l^2(\mathbb{Z}), E(\boldsymbol{h})=e \rbrace$ defined in Section \ref{s:delta1}.

For example, in the case of the Standard map, $\Delta_1$ can be estimated for small $k$ (Section 4), and calculated by elementary methods in the {\it anti-integrable limit}, i.e. for large $k$, in which case it is $\Delta_1=k^2 + O(k)$ (Section 9.6).

Importantly, we show in Section \ref{s:delta1} that $\Delta_1 > \tilde{\Delta}_1 >0$ whenever (N1) and (N2) hold (hyperbolicity of $(y_0,p_0)$ is actually not required), and in Section \ref{s:interpretation} that, whenever stable and unstable manifolds intersect transversally, then $\Delta_1$ is proportional to the angle of splitting of separatrices. We will see that $\Delta_1$ quantifies instability, as stated in Theorems below.

Let $\lambda > 1$ be an eigenvalue of $Df(y_0,p_0)$ (there is always one by (N1); the other eigenvalue is then $1/\lambda$ because of the area-preserving property of $f$). Then by (N1), there exists a constant $ \kappa_1 > 0$ such that for every $j \geq 0$,
\begin{equation}
 |\tilde{z}_{-j}-y_0| \leq \kappa_1 \lambda^{-(j+1)}, \hspace{5ex} |\tilde{z}_j-y_0-1| \leq \kappa_1 \lambda^{-(j+1)}. \label{r:condhyp}
\end{equation}
Then we have:
\begin{thm} \label{t:main1}
	Assume (N1) and (N2) hold. Then there exists an ergodic, positive entropy invariant measure $\mu$ with metric entropy $h_{\mu}(f) \geq \log 2 / (2N)$, where $N$ is the smallest positive integer satisfying
	\begin{equation}
	N \geq  \frac{\log (4 \kappa_1 \kappa_2) - \log \Delta_1}{\log \lambda}. \label{r:Ncond}
	\end{equation} 
\end{thm}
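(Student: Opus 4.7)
The plan is to realize a full Bernoulli $2$-shift as a subsystem of $f^{2N}$ via an injective, shift-equivariant map $\boldsymbol{\omega} \mapsto \boldsymbol{x}^{\boldsymbol{\omega}}$ from $\{0,1\}^{\mathbb{Z}}$ into the space of $f^{2N}$-orbits. Pushing forward the uniform Bernoulli measure yields an $f^{2N}$-invariant Borel probability of metric entropy $\log 2$; averaging its $f$-translates produces an $f$-invariant measure of entropy $\log 2/(2N)$, and an ergodic component (retaining entropy at least $\log 2/(2N)$ by the ergodic decomposition of entropy) provides the $\mu$ required by the theorem.

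To build the embedding, I would assign to each bi-infinite word $\boldsymbol{\omega}$ a candidate configuration $\boldsymbol{x}^{\boldsymbol{\omega}} = \boldsymbol{z} + \boldsymbol{h}^{\boldsymbol{\omega}}$ with $\boldsymbol{h}^{\boldsymbol{\omega}} \in l^2(\mathbb{Z})$, defined block-by-block on intervals of length $2N$. On $[2Nj, 2N(j+1))$ I would take $\boldsymbol{h}^{\boldsymbol{\omega}}$ equal to $\boldsymbol{0}$ when $\omega_j = 0$, so that the candidate follows the minimizing homoclinic $\boldsymbol{z}$; and equal to a translated copy of $\tilde{\boldsymbol{z}} - \boldsymbol{z}$ when $\omega_j = 1$, so that the candidate follows the minimax homoclinic $\tilde{\boldsymbol{z}}$. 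By (1.3) both homoclinics lie within $\kappa_1 \lambda^{-(N+1)}$ of the fixed point at the block endpoints, so the splicing introduces only an $l^2$-error of the same order per block; the uniform bounds (A3) then ensure $\boldsymbol{h}^{\boldsymbol{\omega}} \in l^2(\mathbb{Z})$ and $E(\boldsymbol{x}^{\boldsymbol{\omega}}) < \infty$.

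The key step is to deform each candidate into a genuine critical point of $E$ -- hence, by (1.2), an orbit of $F$ -- without changing its block coding. I would run the $l^2$ gradient semiflow $\dot{\boldsymbol{h}} = -\nabla E(\boldsymbol{h})$ starting at $\boldsymbol{h}^{\boldsymbol{\omega}}$. The positivity of $\Delta_1$ provides a quantitative mountain-pass barrier between the minimizing and minimax wells: any configuration in the separating set $\mathcal{N}(e)$ for $e \in [0, \Delta_0/2]$ has $||\nabla E||^2 \geq \Delta_1$, so the flow cannot cross between basins. The choice of $N$ in (1.4), equivalent to $\lambda^{-N} \leq \Delta_1/(4\kappa_1\kappa_2)$ (with $\kappa_2$ playing the role of a local Lipschitz/$C^2$ constant of $\nabla E$ arising from (A3)), is precisely what keeps the splicing error of order $\kappa_1\lambda^{-N}$ below the barrier $\sqrt{\Delta_1}$. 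The $\omega$-limit of the semiflow is then a critical configuration retaining the coding of $\boldsymbol{\omega}$, and distinct words produce distinct orbits.

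The principal obstacle will be controlling all blocks simultaneously: the gradient flow couples every coordinate of $\boldsymbol{h}$ through the nearest-neighbour structure of $E$, so in principle a rearrangement near block $j$ could trigger a coding flip in some distant block $j'$. I expect this to be handled by the hyperbolic decoupling in (1.3) -- the $l^2$-influence between distant blocks decays geometrically in $|j-j'|$ -- combined with a per-block application of the $\Delta_1$ barrier; a diagonal argument on finite truncations $\boldsymbol{\omega}|_{[-M,M]}$, using the uniform continuity from (A3) to extract subsequential $l^2$-limits, should then yield the full infinite-block configurations needed for the embedding.
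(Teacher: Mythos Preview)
Your coding scheme has a structural flaw: you assign the symbol $\omega_j=1$ to the minimax homoclinic $\tilde{\boldsymbol{z}}$, but $\tilde{\boldsymbol{z}}-\boldsymbol{z}$ is a \emph{saddle} of $E$ at level $\Delta_0$, not a local minimum. There is no ``minimax well'', and the gradient flow started near $\tilde{\boldsymbol{z}}-\boldsymbol{z}$ (after your splicing perturbation) will generically drift off along the unstable direction, destroying the code. In the paper the two symbols are the fixed point $y_0$ and the \emph{minimizing} homoclinic $\boldsymbol{z}$ (both action minimizers), while $\tilde{\boldsymbol{z}}$ is used only as an \emph{order barrier}: shifted copies of $\tilde{\boldsymbol{z}}$ form the upper and lower boundaries of the box $\mathcal{A}_{\boldsymbol{\omega}}$. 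Relatedly, configurations encoding a positive density of $1$'s are not of the form $\boldsymbol{z}+\boldsymbol{h}$ with $\boldsymbol{h}\in l^2(\mathbb{Z})$: each homoclinic excursion advances the configuration by $+1$, so the genuine $\boldsymbol{x}^{\boldsymbol{\omega}}$ is an unbounded integer staircase in $\mathcal{X}_1$, and $E$ is only invoked \emph{locally}, after truncation $\boldsymbol{x}^{\tau,k}$ at each $1$-block.

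There is a second gap in the convergence step. For the formally gradient semiflow on $\mathcal{X}$ with the product topology, the $\omega$-limit set of an individual orbit need not consist of equilibria; this is precisely the extended-gradient pathology the paper is designed to circumvent. The paper does not follow single configurations to equilibrium. Instead it pushes the Bernoulli measure $\hat{\mu}_{\Omega}$ forward under the induced semiflow $\hat{\xi}^*$ on $\hat{S}^{2N}$-invariant \emph{measures}, where $L(\mu)=\int V(x_0,x_1)\,d\mu$ is a genuine Lyapunov function and the LaSalle principle forces any weak$^*$ limit $\nu$ to be supported on $\hat{\mathcal{E}}$. The quantities $\Delta_1$ and $N$ enter only through the local action--flux balance (Lemma \ref{l:balance}): condition (\ref{r:Ncond}) makes the boundary flux $|F(t)|\le \kappa_1\kappa_2\lambda^{-N}$ smaller than the dissipation $\Delta_1/2$, so that $\mathcal{B}_{\boldsymbol{\omega}}=\{E(\boldsymbol{x}^{\tau,k})\le e_0 \text{ for all } k\}$ is $\mathcal{A}_{\boldsymbol{\omega}}$-relatively $\xi$-invariant; the order-preserving property then gives (B2), and the nesting Lemma \ref{l:nesting} makes $\mathcal{A}_{\boldsymbol{\omega}}\cap\mathcal{B}_{\boldsymbol{\omega}}$ fully $\xi$-invariant, which is what preserves the coding along the flow on measures. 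Your proposed diagonal argument on finite truncations does not supply either the equilibrium property of the limit or the block-coding preservation.
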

Here $\kappa_2$ is a constant depending only on the first and second derivatives of $V$, explicitly given by (\ref{r:balance2}).
From the variational principle for topological and metric entropy \cite[Theorem 4.5.3]{Katok:95}, we deduce:

\begin{corollary}
	If (N1) and (N2) hold, the topological entropy $h_{\text{top}}(f) \geq \log 2 / (2N)$, where $N$ is as in Theorem \ref{t:main1}.
\end{corollary}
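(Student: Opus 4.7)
The plan is to apply the variational principle directly to the ergodic measure $\mu$ produced by Theorem \ref{t:main1}. The only care required is that the ambient phase space $\mathbb{S}^1 \times \mathbb{R}$ is non-compact, whereas the statement of the variational principle cited from \cite{Katok:95} (Theorem 4.5.3) is for homeomorphisms of compact metric spaces. My strategy is therefore to pass to a compact $f$-invariant subset carrying $\mu$ and apply the principle there; then use monotonicity of topological entropy under restriction to closed invariant subsets.

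First, I would verify that $\mathrm{supp}(\mu)$ is compact. The measure $\mu$ from Theorem \ref{t:main1} is built out of configurations of the form $(z_k+h_k)_{k\in\mathbb{Z}}$ with $\boldsymbol{h}\in l^2(\mathbb{Z})$ and $E(\boldsymbol{h})$ bounded, where $(z_k)$ is the primary minimizing homoclinic of $(y_0,p_0)$. Since $(z_k)$ is bounded (homoclinic) and the $l^2$-perturbation $\boldsymbol{h}$ is bounded (indeed $h_k\to 0$ as $|k|\to\infty$), the corresponding orbits in phase space lie in a bounded strip $\mathbb{S}^1\times[-M,M]$ for some $M$ determined by the parameters of the construction. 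Hence $\mathrm{supp}(\mu)$ is closed, $f$-invariant, and bounded in $\mathbb{S}^1\times\mathbb{R}$, and therefore compact. Setting $K=\mathrm{supp}(\mu)$ makes $f|_K$ a homeomorphism of a compact metric space, and Theorem 4.5.3 of \cite{Katok:95} yields
\begin{equation*}
h_{\text{top}}(f|_K) \;=\; \sup_\nu h_\nu(f|_K) \;\geq\; h_\mu(f|_K) \;=\; h_\mu(f) \;\geq\; \frac{\log 2}{2N},
\end{equation*}
where the supremum is over $f|_K$-invariant Borel probability measures. Since topological entropy is monotone under restriction to closed invariant subsets, $h_{\text{top}}(f)\geq h_{\text{top}}(f|_K)\geq \log 2/(2N)$, giving the stated bound.

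The only genuine obstacle is the compactness check for $\mathrm{supp}(\mu)$, which is not part of Theorem \ref{t:main1} as formally stated; however, it should be immediate from inspection of the construction, since the whole analysis is localised in a neighbourhood of the hyperbolic equilibrium and its primary homoclinic loop. If one prefers to avoid any extra verification, an equivalent route is to invoke a version of the variational principle for Borel measurable transformations of Polish spaces (with topological entropy defined via compact invariant subsets), where the inequality $h_{\text{top}}(f)\geq h_\mu(f)$ is built into the definition and the argument becomes a one-liner.
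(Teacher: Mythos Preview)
Your proof is correct and follows the same route as the paper: apply the variational principle to the measure $\mu$ furnished by Theorem~\ref{t:main1}. The paper simply cites \cite[Theorem~4.5.3]{Katok:95} without further comment, whereas you add the (entirely reasonable) step of restricting to the compact invariant set $\mathrm{supp}(\mu)$ to make the citation strictly applicable on the non-compact cylinder; this is a refinement rather than a different argument.
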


We note that the best lower bound above on the topological entropy is $\log 2/2$, thus not a good estimate for maps far from integrable, e.g. for the Standard map with large $k$. The reason for this is that our proof is optimized for maps close to integrable, and we embed the $2$-symbol shift in an invariant set of $f$. One could adjust the argument in the case of large $\Delta_1$ by embedding a shift with more symbols. For example, for large $k$ in the case of the Standard map, we can embed a $O(k)$-symbol shift, and so recover the lower bound $\log k + O(1)$ on the topological entropy as in \cite{Knill:96}.

Finally, we can approximate $(y_0,p_0)$ arbitrarily well by positive entropy measures, with known a-priori bounds from below on the entropy of measures in the sequence as a function of distance from $(y_0,p_0)$:

\begin{thm} \label{t:main2}
	There exists a sequence of ergodic positive entropy invariant measures $\mu_n$, weak$^*$-converging to $\delta_{(y_0,p_0)}$. Furthermore,
\begin{equation}
h_{\mu_n}(f)\geq \dfrac{\kappa_3}{N}  W_1\left( \mu_n,\delta_{(y_0,p_0)} \right) . \label{r:wasserstein}
\end{equation}
\end{thm}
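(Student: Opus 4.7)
The plan is to slow down the horseshoe that Theorem \ref{t:main1} embeds in $f$, by inserting long dwelling intervals near the hyperbolic fixed point $(y_0,p_0)$ between successive homoclinic excursions. The resulting family $\{\mu_n\}$ will have most of its mass concentrated near $(y_0,p_0)$, yet each member still carries the symbolic complexity of a full $2$-shift, with entropy decaying proportionally to the Wasserstein distance.

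First I would revisit the construction behind Theorem \ref{t:main1}: on a set $\Lambda$ invariant under $f^{2N}$, two distinct homoclinic excursion types are encoded as symbols $0$ and $1$. For each $n \geq 0$, I would extend this to a $(2N+n)$-block horseshoe $\Lambda_n$ obtained by prepending $n$ dwell iterations to every excursion block, during which the orbit stays in a ball $B_{r_n}(y_0,p_0)$ with $r_n \leq \kappa_1\lambda^{-\lfloor n/2\rfloor}$, using \eqref{r:condhyp} together with standard stable/unstable manifold shadowing near the hyperbolic equilibrium. On $\Lambda_n$ the map $f^{2N+n}$ is conjugate to the full $2$-shift; pushing forward the symmetric Bernoulli measure $\nu_n$ and averaging gives
\begin{equation*}
\mu_n \;:=\; \frac{1}{2N+n}\sum_{k=0}^{2N+n-1}(f^k)_*\nu_n,
\end{equation*}
which is $f$-invariant. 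Its ergodicity for $f$ itself follows from the Bernoulli (hence mixing) property of $\nu_n$ together with the essential disjointness of the fibres $\Lambda_n,f(\Lambda_n),\ldots,f^{2N+n-1}(\Lambda_n)$, which sit at distinct phases of the excursion cycle. The entropy is computed via $h_{\mu_n}(f) = h_{\nu_n}(f^{2N+n})/(2N+n) = \log 2/(2N+n) > 0$.

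Second, to obtain \eqref{r:wasserstein}, let $D$ denote the diameter in the cylinder metric of the compact excursion region used in the horseshoe construction. Out of every $2N+n$ consecutive iterations of an orbit in $\Lambda_n$, exactly $n$ lie in $B_{r_n}(y_0,p_0)$ while the remaining $2N$ lie in the excursion region, so
\begin{equation*}
W_1\bigl(\mu_n,\delta_{(y_0,p_0)}\bigr) \;=\; \int d\bigl((x,p),(y_0,p_0)\bigr)\,d\mu_n \;\leq\; \frac{2N}{2N+n}\,D + \frac{n}{2N+n}\,r_n \;\leq\; \frac{2ND}{2N+n}
\end{equation*}
for all $n$ large enough that $r_n \leq D$. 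This gives weak$^*$ convergence $\mu_n \to \delta_{(y_0,p_0)}$, and rearranging yields
\begin{equation*}
h_{\mu_n}(f) \;=\; \frac{\log 2}{2N+n} \;\geq\; \frac{\log 2}{2D}\cdot \frac{1}{N}\,W_1\bigl(\mu_n,\delta_{(y_0,p_0)}\bigr),
\end{equation*}
which is \eqref{r:wasserstein} with $\kappa_3 = \log 2/(2D)$.

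The main obstacle I foresee is verifying rigorously that the variational/minimax machinery underpinning Theorem \ref{t:main1} tolerates arbitrarily long dwell windows near $(y_0,p_0)$ without merging the two symbolic branches. The condition \eqref{r:Ncond} is engineered so that, over a window of length $2N$, the gradient-flow construction of the paper separates single from double homoclinic excursions by a definite action gap; the added $n$ iterations contribute only an $O(\lambda^{-n})$ perturbation to the action by \eqref{r:condhyp} combined with the uniform bounds on the second derivatives of $V$ in (A3), so they should be absorbable into the safety margin of the original estimate. A secondary, routine point is choosing a bounded strip containing the homoclinic loop to make $D$ finite and to confine all relevant orbits; this is unproblematic because the two homoclinic orbits together with $(y_0,p_0)$ form a compact set.
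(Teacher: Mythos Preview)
Your overall strategy---lengthen the basic block so that most of each period is spent near $(y_0,p_0)$, then observe that both the entropy and the Wasserstein distance scale like the reciprocal of the block length---is exactly the idea behind the paper's proof. However, the paper implements it much more directly: it simply re-runs the construction of Theorem~\ref{t:main1} with $N$ replaced by $nN$. Since condition~\eqref{r:Ncond} only becomes easier to satisfy as the block length grows, Proposition~\ref{p:invariant} applies verbatim, yielding an ergodic $\nu_n\in\mathcal{M}(\hat{\mathcal E},\hat S^{2nN})$; one then sets $\mu_n=\tfrac{1}{2nN}\sum_{k=-nN}^{nN-1}\hat\iota^*(S^k)^*\nu_n$, and the Wasserstein bound follows from the confinement $\boldsymbol{x}\in\mathcal A_{\boldsymbol\omega}$ together with~\eqref{r:condhyp}, via the product coupling with $\delta_{(y_0,p_0)}$. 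In this parametrisation the ``dwell'' iterations are not inserted by hand: they are simply the tails $z_{-nN+1},\dots,z_{-N}$ and $z_{N+1},\dots,z_{nN}$ of the homoclinic configuration, which by~\eqref{r:condhyp} are already exponentially close to $y_0$ (mod~$1$).

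Your plan instead keeps the excursion window at length $2N$ and prepends $n$ separate dwell iterations, invoking ``standard stable/unstable manifold shadowing'' to pin the orbit in $B_{r_n}(y_0,p_0)$. This runs into two real problems. First, Theorem~\ref{t:main1} does not produce a topological horseshoe $\Lambda$ with $f^{2N}|_\Lambda$ conjugate to a shift; it produces an $\hat S^{2N}$-invariant \emph{measure} having the Bernoulli shift as a factor (Proposition~\ref{p:construction}), so there is no set $\Lambda$ of actual orbits to which one can append dwell segments. Second, classical hyperbolic shadowing would require transversality of the homoclinic intersection, which is nowhere assumed---indeed the whole point of the $\Delta_1$ machinery is to bypass it. Your obstacle paragraph correctly anticipates that one must re-verify the gradient-flow invariance of Section~\ref{s:invariant} for the modified block structure; but that is precisely the work the paper sidesteps by the multiplicative reparametrisation $N\mapsto nN$, for which all of Section~\ref{s:invariant} applies without a single change.
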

Here $\delta_{(y_0,p_0)}$ is the Dirac measure concentrated at $(y_0,p_0)$, $W_1$ is the $L^1$-Wasserstein distance of measures, $N$ is as in Theorem \ref{t:main1}, and $\kappa_3$ is a constant depending on $\kappa_1$ and $\lambda$.

All these results can be generalized to action-minimizing periodic orbits satisfying analogues of (N1), (N2), and to irrational rotation numbers as long as the associated Mather set is a Cantori (Section 9.7), by associating a local measure of instability $\Delta_1(\rho)$ to each rotation number $\rho \in \mathbb{R}$. Furthermore, following the approach from \cite{Slijepcevic:16}, one can likely construct positive entropy 'diffusion' invariant measures, and embed Markov chains in a region of instability of an area-preserving twist diffeomorphism as outlined in \cite{MacKay:88}, with explicit estimates of the transport in terms of $\Delta_1(\rho)$.

Let us explain the main idea behind the construction and relate it to other approaches to estimating topological entropy. It was shown by Angenent \cite{Angenent:90,Angenent:92} and Boyland and Hall \cite{Boyland:87} that non-existence of a homotopically non-trivial invariant circle for a single rotation number $\rho \in \mathbb{R}$ implies $h_{top}(f)>0$ (our condition (N2) assumes that for $\rho=0$). In our understanding their proofs do not lead to lower bounds to topological entropy, as the methods are essentially topological. The only rigorous lower bounds on topological entropy of area-preserving twist maps we are aware of use some local uniformly hyperbolic structure and a shadowing argument to embed a shift map. Examples are the Standard map and small $k$, by using the estimates of the angle of intersection of stable and unstable manifolds of the fixed point by Lazutkin, Gelfreich and others \cite{Gelfreich:99}, or for large $k$ as done by Knill \cite{Knill:96}.

In \cite{Slijepcevic:16}, we proposed a method of constructing positive entropy measures of Lagrangian maps and flows, by constructing invariant sets of formally gradient dynamics of the action, without assuming any uniform hyperbolicity. We introduce the semiflow $\xi$ on a subset of $\mathcal{X}\subset \mathbb{R}^{\mathbb{Z}}$ associated to formally gradient dynamics of the action associated to an area-preserving twist map in (\ref{r:flow}). It is well-known that invariant sets of $\xi$ contain {\it configurations} $\boldsymbol{x}\in \mathcal{X}$ which are equilibria of $\xi$, thus correspond to true orbits of $f$ \cite{Gole:01,Slijepcevic:00}. The Angenent's proof of positive topological entropy \cite{Angenent:90,Angenent:92} can be interpreted in that context: Angenent associates to any sequence $\boldsymbol{\omega}\in \lbrace 0,1 \rbrace^{\mathbb{Z}}$ a sub-solution $\underline{\boldsymbol{x}}_{\boldsymbol{\omega}}$ and a super-solution $\overline{\boldsymbol{x}}_{\boldsymbol{\omega}}$ of $\xi$, such that $\underline{\boldsymbol{x}}_{\boldsymbol{\omega}} \leq \overline{\boldsymbol{x}}_{\boldsymbol{\omega}}$ (see Section \ref{s:preliminaries} for the partial order on $\mathcal{X}$). By the monotonicity of the semiflow $\xi$ (see Section \ref{s:construction}), the set $\underline{\boldsymbol{x}}_{\boldsymbol{\omega}} \leq \boldsymbol{x} \leq \overline{\boldsymbol{x}}_{\boldsymbol{\omega}}$ is $\xi$-invariant, thus contains an equilibrium $\boldsymbol{x}_{\boldsymbol{\omega}}$, which leads to establishing conjugacy of a 2-shift and $f^N|_{\mathcal{A}}$ for $N$ large enough, where $\mathcal{A}$ is an invariant set of $f^N$.

Angenent's approach does not extend to higher degrees of freedom, as it relies almost entirely on the ordering structure derived from the first coordinate being $x \in \mathbb{R}$. We use instead energy estimates developed recently to establish local control of dissipative PDEs, such as the Navier-Stokes equation \cite{Gallay:01,Gallay:12,Gallay:14,Gallay:15}, specifically the energy-energy dissipation-energy flux balance law stated here in an appropriate form in Lemma \ref{l:balance}. We refer the reader to \cite{Slijepcevic:16} for comparison of the method to other approaches to study of diffusion and transport in Hamiltonian systems, including the classical Arnold example \cite{Arnold:64,Bessi:96,Simo:00}.

The paper is structured as follows. We first recall the key facts of the Aubry-Mather theory and properties of the Peierls barrier $\Delta_0$. In Sections \ref{s:delta1} and \ref{s:interpretation} we establish positivity of $\Delta_1$ whenever (N1) and (N2) hold, and relate it to other notions of instability. We then recall and adapt the abstract tools of constructing positive entropy invariant measures from \cite{Slijepcevic:16}. The Section \ref{s:invariant} contains the core of the argument: a precise construction of a $\xi$-invariant set associated to any $\boldsymbol{\omega}\in \lbrace 0,1 \rbrace^{\mathbb{Z}}$. We complete the proofs in the next two sections, with some technical results moved to Appendices A and B.

\section{Preliminaries} \label{s:preliminaries}

In this section we recall the key facts of the Aubry-Mather theory \cite{Aubry:88,Mather:82,Sorrentino:10} and its extensions mainly due to Bangert and Gol\'{e} \cite{Bangert:94,Gole:92,Gole:01} needed in the following. We first introduce the notation and the key notions, and then establish the required properties of the minimising rays and $\Delta_0$, and in particular establish that (N1) and (N2) imply $\Delta_0>0$ (the condition (N1) is actually not required, as explained in Section 9.2).
 
Denote by $\mathcal{X}=\mathbb{R}^{\mathbb{Z}}$ equipped with the product topology. Let $\mathcal{E}$ be the set of all $\boldsymbol{x}$ such that for all $k \in \mathbb{Z}$, $V_2(x_{k-1}+h_{k-1},x_k+h_k)+V_1(x_k+h_k,x_{k+1}+h_{k+1})=0$, and then by the twist condition (A2),
\begin{equation}
\iota : \boldsymbol{x} \mapsto (x_0,p_0), \hspace{5ex} p_0=-V_1(x_0,x_1) \label{d:iota}
\end{equation} 
is a homeomorphism of $\mathcal{E}$ and $\mathbb{R}^2$, with $\boldsymbol{x}$ being the sequence of first coordinates of the $F$-orbit of $(x_0,p_0)$. Recall that $\boldsymbol{x}\in \mathcal{X}$ is action minimizing, if for any finite subsegment $n < m$, $V(x_n,x_{n+1},...,x_m) \leq V(x_n,\tilde{x}_{n+1},...\tilde{x}_{m-1},x_m)$ for any $(\tilde{x}_{n+1},...\tilde{x}_{m-1}) \in \mathbb{R}^{m-n-1}$, where $$V(x_n,x_{n+1},...,x_m)=\sum_{k=n}^{m-1} V(x_k,x_{k+1}).$$ The same definition naturally extends to segments $(x_n,x_{n+1},...,x_m)$ and rays $(x_k)_{k \leq k_0}$ or $(x_k)_{k \geq k_0}$.

Assume (N1) throughout the Section. Following Bangert \cite{Bangert:94}, we can study for any $x \in (y_0,y_0+1)$ the function
\begin{equation}
S(\boldsymbol{x})= \inf  \sum_{k=-\infty}^{\infty} (V(x_k,x_{k+1})-V(y_0,y_0)), \label{d:S}
\end{equation}
where infimum is taken over all $\boldsymbol{x} \in \mathbb{R}^{\mathbb{Z}}$ such that 
\begin{equation}
\lim_{k \rightarrow -\infty} x_k = y_0, \hspace{5ex} \lim_{k \rightarrow \infty} x_k = y_0+1, \label{r:Scond}
\end{equation} and such that $x_0=x$. The following is standard \cite{Bangert:94,Gole:01}:

\begin{lemma} For each $x \in (x_0,x_0+1)$, the infimum in (\ref{d:S}) is attained for some $\boldsymbol{x}\in \mathcal{X}$ such that $(x_k)_{k \leq 0}$ and $(x_k)_{k \geq 0}$ are minimizing rays.
\end{lemma}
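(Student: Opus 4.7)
The plan is the direct method of the calculus of variations adapted to the sum (\ref{d:S}). Fix $x\in(y_0,y_0+1)$ and pick a minimizing sequence $\boldsymbol{x}^{(n)}\in\mathcal{X}$ admissible for (\ref{d:S}), i.e.\ satisfying (\ref{r:Scond}) and $x_0^{(n)}=x$, with total renormalized action tending to $S(\boldsymbol{x})$. Replacing each $\boldsymbol{x}^{(n)}$ by an action-decreasing competitor if needed, I would assume each $\boldsymbol{x}^{(n)}$ is action-minimizing on every finite subsegment with fixed endpoints; the twist condition (A2) then supplies the Aubry--Mather--Bangert ordering, so that each $\boldsymbol{x}^{(n)}$ is non-decreasing in $k$. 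Combined with (\ref{r:Scond}) this confines $y_0\leq x_k^{(n)}\leq y_0+1$ for every $k$ and $n$, so Tychonoff plus a diagonal extraction furnishes a coordinatewise limit $\boldsymbol{x}^*\in[y_0,y_0+1]^{\mathbb{Z}}$ with $x_0^*=x$, still non-decreasing. Standard lower semicontinuity of the renormalized action under product-topology convergence (using (A3) to control the density on compacta and the hyperbolic decay supplied by (N1) to control the tails uniformly in $n$) then yields $\sum_k(V(x_k^*,x_{k+1}^*)-V(y_0,y_0))\leq S(\boldsymbol{x})$.

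The main obstacle is verifying the asymptotic conditions (\ref{r:Scond}) for $\boldsymbol{x}^*$, since coordinatewise convergence does not by itself preserve tail behaviour at $\pm\infty$. Monotonicity of $\boldsymbol{x}^*$ ensures that $L:=\lim_{k\to-\infty}x_k^*\in[y_0,x]$ and $R:=\lim_{k\to\infty}x_k^*\in[x,y_0+1]$ exist. Suppose $L>y_0$. By the uniqueness in (N1) of $y_0$ as the (mod $1$) minimum of $x\mapsto V(x,x)$ one has $V(L,L)>V(y_0,y_0)$, so by continuity there is $c>0$ with $V(x_k^*,x_{k+1}^*)-V(y_0,y_0)\geq c$ for all sufficiently negative $k$, forcing the above sum to equal $+\infty$ and contradicting the Fatou bound. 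Hence $L=y_0$; symmetrically $R=y_0+1$, and so $\boldsymbol{x}^*$ is admissible for (\ref{d:S}) and realizes the infimum $S(\boldsymbol{x})$.

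Finally, the ray-minimizing assertion is a standard cut-and-paste argument: were $(x_k^*)_{k\geq 0}$ not a minimizing ray, any strictly lower-action competing right ray with the same starting point $x_0^*$ and same asymptotic limit $y_0+1$ could be spliced to the unaltered left half of $\boldsymbol{x}^*$ to produce an admissible configuration of strictly smaller total action, contradicting the minimality just established. The case of $(x_k^*)_{k\leq 0}$ is entirely symmetric.
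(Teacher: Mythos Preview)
The paper does not prove this lemma; it simply records it as ``standard'' with references to Bangert and Gol\'{e}. Your direct-method argument is indeed the standard route and is essentially sound, but two steps are stated more confidently than the details warrant.

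First, the sentence ``I would assume each $\boldsymbol{x}^{(n)}$ is action-minimizing on every finite subsegment with fixed endpoints; the twist condition (A2) then supplies \ldots\ each $\boldsymbol{x}^{(n)}$ is non-decreasing'' is loose on both counts. With the constraint $x_0^{(n)}=x$ you can only minimize separately on $(-\infty,0]$ and $[0,\infty)$; a segment straddling $0$ cannot be freely minimized without moving $x_0$. More importantly, finite segment minimizers need not be monotone; monotonicity (the Birkhoff property) is a feature of \emph{infinite} minimizers and follows from the Aubry crossing lemma applied to $\boldsymbol{x}$ versus its own shift, not from (A2) alone. What you actually need for compactness is only $y_0\le x_k^{(n)}\le y_0+1$, and that follows from the lattice inequality $V(\max,\max)+V(\min,\min)\le V+V$ (a consequence of (A2)) by comparing with the constant minimizers $\boldsymbol{y}_0$ and $\boldsymbol{y}_0+1$. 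Monotonicity of the \emph{limit} $\boldsymbol{x}^*$ then follows because each half of $\boldsymbol{x}^*$ is a genuine minimizing ray (limits of segment minimizers remain segment minimizers by continuity of $V$), hence Birkhoff.

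Second, the lower-semicontinuity step is the real technical crux, and ``hyperbolic decay supplied by (N1) to control the tails uniformly in $n$'' does not quite do it as stated: hyperbolicity of $(y_0,p_0)$ controls orbits of $f$ near the fixed point, but your $\boldsymbol{x}^{(n)}$ are orbits only on the modified finite window, with uncontrolled original tails outside. A clean fix is to take the structured minimizing sequence $x_k^{(m)}=y_0$ for $k\le -m$, $x_k^{(m)}=y_0+1$ for $k\ge m$, and segment-minimize on $[-m,0]$ and $[0,m]$; then the renormalized action is a finite sum, the sequence is decreasing in $m$, and convergence to $S(x)$ together with attainment follows without any Fatou issue. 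Your argument for $L=y_0$, $R=y_0+1$ and the final cut-and-paste paragraph are fine.
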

Denote by $\mathcal{R} \subset \mathcal{X}$ the set of all $\boldsymbol{x}$ minimizing (\ref{d:S}) and satisfying (\ref{r:Scond}) for some $x_0 \in (y_0,y_0+1)$. We now have the following:
\begin{lemma} \label{l:rayproperties}
	(i) The function $ x \mapsto S(x)$ is continuous on $(y_0,y_0+1)$, bounded from below and above, and attains its minimum and maximum.
	
	(ii) For any $\boldsymbol{x} \in \mathcal{R}$ the following holds: $y_0 < x_k < y_0+1$, and $x_k$, $k \in \mathbb{Z}$ is a strictly increasing sequence.
	
	(ii) If $\boldsymbol{x} \in \mathcal{R}$ is such that $x_0$ is an extremal point of $x \mapsto S(x)$, then $\boldsymbol{x} \in \mathcal{E}$.
\end{lemma}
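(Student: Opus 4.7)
The plan is to treat the three claims in the order (ii), (i), (iii): monotonicity first because it is the structural input, then continuity and compactness of $S$, and finally the Euler-Lagrange conclusion at an extremal $x_0$. For (ii), I would use the classical Aubry monotonicity argument. Suppose $\boldsymbol{x} \in \mathcal{R}$ is not strictly increasing, so $x_k \ge x_{k+1}$ for some $k$. A local swap of two out-of-order entries produces a sequence with the same asymptotics (\ref{r:Scond}) and strictly smaller action, by the four-point inequality $V(a,b) + V(a',b') < V(a,b') + V(a',b)$ valid for $a < a'$ and $b < b'$, which follows from $V_{12} \le -\delta$ in (A2). This contradicts minimality, so $x_k < x_{k+1}$ everywhere, and combined with the asymptotics $x_{-\infty}=y_0$ and $x_{+\infty}=y_0+1$ this forces $y_0 < x_k < y_0+1$ for all $k$.

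For (i), continuity of $S$ is an envelope estimate: given $\boldsymbol{x}^{*}$ realising $S(x)$, replacing only its $0$-th entry by $x'$ gives an admissible competitor for $S(x')$, with action change $O(|x-x'|)$ by (A3); symmetric comparison yields Lipschitz continuity. Boundedness above follows from (ii) and the hyperbolicity in (N1): minimisers are trapped in $[y_0,y_0+1]^{\mathbb{Z}}$ and converge geometrically to the endpoints, so the series (\ref{d:S}) is uniformly summable in $x$; boundedness below is immediate. For the attainment of both extrema inside the open interval $(y_0,y_0+1)$, I would combine continuity with compactness in the product topology. Any sequence $x^{(n)} \in (y_0,y_0+1)$ with $S(x^{(n)})$ converging to $\sup S$ or $\inf S$ lifts to a sequence of minimisers that accumulates at some $\boldsymbol{x}^{\infty} \in \mathcal{R}$ with $x^{\infty}_0 \in [y_0, y_0+1]$; if $x^{\infty}_0$ were an endpoint, one re-indexes (shifts in $k$) this minimiser to place its $0$-th entry back into the interior, and the action is invariant under such shifts, so the extremal value of $S$ is attained at an interior point.

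For (iii), fix $\boldsymbol{x} \in \mathcal{R}$ with $x_0$ extremal. Modifying only its $0$-th coordinate, $x_0 \mapsto x_0 + \varepsilon$, yields an admissible competitor for $S(x_0 + \varepsilon)$, hence
\begin{equation*}
S(x_0+\varepsilon) - S(x_0) \le \varepsilon\bigl[V_2(x_{-1},x_0) + V_1(x_0,x_1)\bigr] + O(\varepsilon^2).
\end{equation*}
If $x_0$ is a minimum of $S$, the left-hand side is $\ge 0$ for both signs of $\varepsilon$, forcing $V_2(x_{-1},x_0) + V_1(x_0,x_1) = 0$. For a maximum I would combine this with the dual lower bound obtained by applying the same envelope construction to a minimiser $\boldsymbol{y}^{(\varepsilon)} \in \mathcal{R}$ with $y^{(\varepsilon)}_0 = x_0+\varepsilon$, and pass to $\varepsilon \to 0$ using the semi-concavity of $S$ (as an infimum of smooth functions) and the compactness established in (i). At every $k \neq 0$ the Euler-Lagrange equation holds automatically because $(x_j)_{j \le 0}$ and $(x_j)_{j \ge 0}$ are minimising rays, so combined with the $k=0$ equation we conclude $\boldsymbol{x} \in \mathcal{E}$. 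The main technical obstacle I anticipate is precisely this maximum case: the one-sided envelope inequality alone does not constrain $\boldsymbol{x}$ directly, and one must argue that \emph{every} minimiser at a maximal $x_0$, and not just a limit of neighbouring minimisers, satisfies the Euler-Lagrange equation.
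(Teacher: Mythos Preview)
The paper does not prove this lemma; it is stated immediately after a result flagged as ``standard'' with references to Bangert and Gol\'e, and is implicitly treated as part of that background material. There is thus no proof in the paper to compare against, and your outline follows the classical route from those references.

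Two places where your sketch would need tightening. In (ii), swapping $x_k$ and $x_{k+1}$ alters $x_0$ when $k\in\{-1,0\}$, so the constraint $x_0=x$ defining $\mathcal R$ is violated; the standard repair is to argue monotonicity of each minimizing ray $(x_k)_{k\le 0}$ and $(x_k)_{k\ge 0}$ separately, via the Aubry crossing lemma applied to the ray against its own shift, or against the constant equilibrium $\boldsymbol y_0$. In (iii), the difficulty you flag at a maximum is genuine: the one-sided envelope inequality and the semi-concavity/limit argument you propose both yield \emph{some} $\boldsymbol x\in\mathcal R\cap\mathcal E$ with $x_0$ at the maximum, but not the statement as literally written, that \emph{every} $\boldsymbol x\in\mathcal R$ with that $x_0$ lies in $\mathcal E$. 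The paper sidesteps this by deferring to the literature, and in fact only the existence of a single minimax homoclinic $\tilde{\boldsymbol z}\in\mathcal E$ is used downstream.
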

From now on we fix $\boldsymbol{z}, \tilde{\boldsymbol{z}}$ such that $z_0$, $\tilde{z}_0$ minimize, respectively maximize $S(x)$ on $(y_0,y_0+1)$. Now $\boldsymbol{z}$ is called minimizing, and $\tilde{\boldsymbol{z}}$ minimax homoclinic. We can now write $\Delta_0 = S(\tilde{x}_0)-S(x_0)$.

It is useful to introduce the partial order on $\mathcal{X}$ with $\boldsymbol{x} \leq \boldsymbol{y}$ and $\boldsymbol{x} \ll \boldsymbol{y}$ if for all $k \in \mathbb{Z}$, $x_k \leq y_k$, resp. $x_k < y_k$. We write $\boldsymbol{x} < \boldsymbol{y}$ if $\boldsymbol{x} \leq \boldsymbol{y}$ and $\boldsymbol{x} \neq \boldsymbol{y}$. We say that $\boldsymbol{x}$ and $\boldsymbol{y}$ are totally ordered if $\boldsymbol{x} \leq \boldsymbol{y}$  or $\boldsymbol{x} \geq \boldsymbol{y}$. Now the Aubry Fundamental Lemma \cite{Aubry:88} implies the following:
\begin{lemma} \label{l:ordered}
	If $z$, $\tilde{z}$ are a minimizing and a minimax homoclinic, then $z$ and $\tilde{z}$ are totally ordered.
\end{lemma}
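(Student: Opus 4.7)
My plan is to deduce total ordering via the classical Aubry min/max exchange, using that both $\boldsymbol{z}$ and $\tilde{\boldsymbol{z}}$ are action-minimizing in the same variational setting with common asymptotic data.

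First I would note that by Lemma \ref{l:rayproperties} both $\boldsymbol{z}$ and $\tilde{\boldsymbol{z}}$ lie in $\mathcal{R}$: each minimizes the sum in (\ref{d:S}) subject to (\ref{r:Scond}) and its own choice of zeroth coordinate. I would then define $\boldsymbol{m}, \boldsymbol{M} \in \mathcal{X}$ pointwise by $m_k = \min(z_k, \tilde{z}_k)$ and $M_k = \max(z_k, \tilde{z}_k)$; both still satisfy (\ref{r:Scond}) and are thus admissible for the infima defining $S(m_0)$ and $S(M_0)$. The twist condition (A2) yields the standard lattice ``swap'' inequality
\[
V(m_k, m_{k+1}) + V(M_k, M_{k+1}) \leq V(z_k, z_{k+1}) + V(\tilde{z}_k, \tilde{z}_{k+1}),
\]
strict exactly when $(z_k - \tilde{z}_k)(z_{k+1} - \tilde{z}_{k+1}) < 0$ (a strict crossing between $k$ and $k+1$). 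Summing over $k \in \mathbb{Z}$, with absolute convergence inherited from the estimates used for $\Delta_0$, gives
\[
\sum_k (V(m_k, m_{k+1}) - V(y_0, y_0)) + \sum_k (V(M_k, M_{k+1}) - V(y_0, y_0)) \leq S(z_0) + S(\tilde{z}_0).
\]

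Next I would observe that $\{m_0, M_0\} = \{z_0, \tilde{z}_0\}$, so the right-hand side equals $S(m_0) + S(M_0)$, while admissibility of $\boldsymbol{m}, \boldsymbol{M}$ bounds the left-hand side below by the same quantity. Equality throughout then forces two consequences: $\boldsymbol{m}$ and $\boldsymbol{M}$ themselves realize $S(m_0)$ and $S(M_0)$, so they lie in $\mathcal{E}$ and satisfy the Euler--Lagrange equations; and the swap inequality is an equality at every $k$, immediately ruling out strict crossings.

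The main obstacle --- the genuinely non-routine step --- is ruling out weak crossings, where $z_j = \tilde{z}_j$ at a single index $j$ with the sign of $z_k - \tilde{z}_k$ flipping across $j$. Suppose for instance $z_{j-1} < \tilde{z}_{j-1}$, $z_j = \tilde{z}_j$, $z_{j+1} > \tilde{z}_{j+1}$. Then locally $\boldsymbol{m} = (\ldots, z_{j-1}, z_j, \tilde{z}_{j+1}, \ldots)$, and subtracting the Euler--Lagrange equation for $\boldsymbol{z}$ at $k = j$ from that for $\boldsymbol{m}$ yields $V_1(z_j, \tilde{z}_{j+1}) = V_1(z_j, z_{j+1})$; by the strict monotonicity $V_{12} \leq -\delta < 0$ this forces $\tilde{z}_{j+1} = z_{j+1}$, contradicting the strict flip. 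A symmetric $V_2$-argument handles the mirror case, while a double equality $z_{j-1} = \tilde{z}_{j-1},\ z_j = \tilde{z}_j$ would propagate via twist monotonicity to $\boldsymbol{z} = \tilde{\boldsymbol{z}}$ globally, excluding any ordering flip at all. Hence no crossings occur and $\boldsymbol{z}, \tilde{\boldsymbol{z}}$ are totally ordered.
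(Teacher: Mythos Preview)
Your argument is correct and is precisely the Aubry Fundamental Lemma adapted to the homoclinic setting, which is exactly what the paper invokes (without detail) as its entire proof of Lemma~\ref{l:ordered}. Your min/max swap plus the twist-based exclusion of strict and weak crossings is the standard content of that lemma; the one point worth making explicit is that $\boldsymbol{m},\boldsymbol{M}\in\mathcal{E}$ follows via Lemma~\ref{l:rayproperties}(iii) because $\{m_0,M_0\}=\{z_0,\tilde z_0\}$ are extremal for $S$, which is what licenses the Euler--Lagrange identity you subtract in the weak-crossing step.
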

From this and Lemma \ref{l:rayproperties} we deduce that without loss of generality, we can choose $\boldsymbol{z}$, $\tilde{\boldsymbol{z}}$ so that
\begin{equation}
y_0< ... <z_{-1}<\tilde{z}_0<z_0<\tilde{z}_1 < z_1 < \tilde{z}_2 < .... < y_0+1
\end{equation}

Denote by $\boldsymbol{y}_0$ the constant configuration $y_0$, thus by (N1) $\boldsymbol{y}_0,\boldsymbol{y}_0+1 \in \mathcal{E}$. Let $\pi_0 : \mathcal{X} \rightarrow \mathbb{R}$ be the projection $\pi_0 : \boldsymbol{x} \mapsto x_0$. We can now deduce the following:
\begin{lemma} \label{l:delta0} If $\Delta_0 = 0$, then the projection $\pi_0 : \mathcal{R}\cup \lbrace \boldsymbol{y}_0,\boldsymbol{y}_0+1 \rbrace \rightarrow [y_0,y_0+1]$ is a homeomorphism.
\end{lemma}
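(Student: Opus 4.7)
The plan is to extract from $\Delta_0 = 0$ enough structural rigidity to make the projection a homeomorphism. Since $z_0$ and $\tilde z_0$ realise the minimum and maximum of $S$ on $(y_0,y_0+1)$, the hypothesis $\Delta_0 = S(\tilde z_0) - S(z_0) = 0$ forces $S$ to be constant on $(y_0,y_0+1)$. Consequently every $x \in (y_0,y_0+1)$ is an extremal point of $S$, and Lemma \ref{l:rayproperties}(iii) then gives $\mathcal{R} \subseteq \mathcal{E}$: every minimizer in $\mathcal{R}$ is a genuine $f$-orbit, so the Euler--Lagrange equation $V_2(x_{k-1},x_k) + V_1(x_k,x_{k+1}) = 0$ holds at every $k$, including $k=0$.

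Next I would verify that $\pi_0$ is a continuous bijection. Surjectivity onto $[y_0,y_0+1]$ follows from the existence of minimizers at every interior $x_0$ (the existence lemma preceding Lemma \ref{l:rayproperties}) together with $\pi_0(\boldsymbol{y}_0)=y_0$, $\pi_0(\boldsymbol{y}_0+1)=y_0+1$. For injectivity, I would first show that any two $\boldsymbol{x}, \boldsymbol{x}' \in \mathcal{R}$ are totally ordered by a standard Aubry crossing argument: their coordinate-wise minimum and maximum $\boldsymbol{u}, \boldsymbol{v}$ preserve the asymptotics with $u_0 = \min(x_0,x'_0)$ and $v_0 = \max(x_0,x'_0)$, and the twist condition $V_{12}<0$ gives
\begin{equation*}
V(u_k,u_{k+1}) + V(v_k,v_{k+1}) \leq V(x_k,x_{k+1}) + V(x'_k,x'_{k+1})
\end{equation*}
term by term, with strict inequality at any crossing index; summing and using $S(u_0)+S(v_0) = S(x_0)+S(x'_0)$ rules out any crossing. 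Now assume $x_0=x'_0$ and $\boldsymbol{x}\leq \boldsymbol{x}'$. Subtracting the Euler--Lagrange identities at $k=0$ yields
\begin{equation*}
V_2(x_{-1},x_0) - V_2(x'_{-1},x_0) = V_1(x_0,x'_1) - V_1(x_0,x_1).
\end{equation*}
The twist condition makes $V_2$ strictly decreasing in its first argument and $V_1$ strictly decreasing in its second, so the left-hand side is $\geq 0$ (since $x_{-1}\leq x'_{-1}$) while the right-hand side is $\leq 0$ (since $x_1 \leq x'_1$). Both sides therefore vanish, giving $x_{-1}=x'_{-1}$ and $x_1=x'_1$; iterating the EL equation forward and backward propagates the equality to every index, so $\boldsymbol{x}=\boldsymbol{x}'$.

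Continuity of $\pi_0$ is the defining property of the product topology, so it remains to show $\pi_0^{-1}$ is continuous. Given $x_n\to x$ in $[y_0,y_0+1]$ I would pass to a monotone subsequence, say $x_n\uparrow x$; injectivity combined with total ordering make $\pi_0$ strictly monotone, so the preimages $\boldsymbol{x}^{(n)}:=\pi_0^{-1}(x_n)$ form an increasing sequence bounded above by $\pi_0^{-1}(x)$, and hence converge coordinate-wise to some $\boldsymbol{x}^\infty$ with $x_0^\infty = x$. The limit is action-minimizing on every finite subsegment by continuity of $V$, and the asymptotics $x_k^\infty \to y_0$, $x_k^\infty \to y_0+1$ are obtained by sandwiching $\boldsymbol{x}^\infty$ between $\boldsymbol{x}^{(n)}$ and $\pi_0^{-1}(x)$. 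Hence $\boldsymbol{x}^\infty \in \mathcal{R}\cup\{\boldsymbol{y}_0,\boldsymbol{y}_0+1\}$, and by injectivity $\boldsymbol{x}^\infty = \pi_0^{-1}(x)$. The main obstacle is the injectivity step: without $\Delta_0=0$ the Euler--Lagrange equation at $k=0$ can fail, allowing distinct parallel minimizers sharing the same $x_0$, so the whole homeomorphism statement ultimately rests on Lemma \ref{l:rayproperties}(iii) being applicable in the extreme case where every point of $(y_0,y_0+1)$ is simultaneously a minimum and a maximum of $S$.
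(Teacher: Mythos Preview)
Your proof is correct and follows the same route as the paper's: surjectivity from the existence of minimizers at every $x_0\in(y_0,y_0+1)$, injectivity from the Aubry non-crossing property of minimizing configurations, and then the homeomorphism conclusion. The paper simply cites the Aubry Fundamental Lemma as a black box for injectivity and invokes the continuous-bijection-from-compact-to-Hausdorff principle for the inverse, whereas you unpack both steps explicitly (the min/max crossing estimate plus the EL-and-twist propagation for injectivity, and a monotone-sequence argument for continuity of $\pi_0^{-1}$).
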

\begin{proof}
By the assumption $\Delta_0=0$, $\pi_0$ is onto $(y_0,y_0+1)$ as for each $w \in (y_0,y_0+1)$ we can find a minimizing ray $\boldsymbol{w} \in \mathcal{R}$, $w_0=w$; thus $\pi_0$ is onto $[y_0,y_0+1]$. As the domain $\mathcal{R}\cup \lbrace \boldsymbol{y}_0,\boldsymbol{y}_0+1 \rbrace$ consists of minimizing configurations, by the Aubry Fundamental Lemma \cite{Aubry:88,Gole:01} any two configurations in $\mathcal{R}\cup \lbrace \boldsymbol{y}_0,\boldsymbol{y}_0+1 \rbrace$ can not intersect, thus $\pi_0$ is injective. By definition $\pi_0$ is continuous, thus it must be a homeomorphism.
\end{proof}

Let $\rho(\boldsymbol{x}) = \lim_{|k| \rightarrow \infty} x_k/|k|$ be the rotation number of $\boldsymbol{x} \in \mathcal{X}$ whenever defined, and let $\rho((x,p))=\rho(\iota^{-1}(x,p))$, $(x,p) \in \mathbb{R}$.

\begin{corollary} \label{c:25} If (N2) holds, then $\Delta_0 > 0$.
\end{corollary}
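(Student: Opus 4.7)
The plan is to argue by contradiction: assuming $\Delta_0=0$, I will assemble $\mathcal{R}$ together with the fixed point into a homotopically non-trivial $f$-invariant circle consisting of orbits with rotation number $0$, directly contradicting (N2).

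First I would use Lemma \ref{l:rayproperties}(i), which says $S$ is continuous on $(y_0,y_0+1)$ with minimum $S(z_0)$ and maximum $S(\tilde z_0)$. Then $\Delta_0=S(\tilde z_0)-S(z_0)=0$ forces $S$ to be constant on $(y_0,y_0+1)$, making every $x$ there an extremal value. By Lemma \ref{l:rayproperties}(iii), this yields $\mathcal{R}\subseteq\mathcal{E}$, so every minimizing ray is a genuine $F$-orbit, heteroclinic from $(y_0,p_0)$ to $(y_0+1,p_0)$. Combining with the homeomorphism $\pi_0$ of Lemma \ref{l:delta0} and continuity of the momentum $p=-V_1(x_0,x_1)$ on $\mathcal{E}$, the composition $\iota\circ\pi_0^{-1}:[y_0,y_0+1]\to\mathbb{R}^2$ is a continuous injection whose image I denote by $C$ and whose projection to the cylinder I denote by $C'$.

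Next I would verify that $C'$ is the forbidden invariant circle. Since $\iota(\boldsymbol{y}_0)$ and $\iota(\boldsymbol{y}_0+1)$ both descend to the same fixed point on $\mathbb{S}^1\times\mathbb{R}$, $C'$ is a topological circle winding exactly once around the cylinder, so it is homotopically non-trivial. For $f$-invariance I would check that the shift $\boldsymbol{x}\mapsto\boldsymbol{x}':=(x_{k+1})_k$ (which corresponds to $F$ via $\iota$) preserves $\mathcal{R}\cup\{\boldsymbol{y}_0,\boldsymbol{y}_0+1\}$: given $\boldsymbol{x}\in\mathcal{R}$, its shift has $x'_0=x_1\in(y_0,y_0+1)$ by Lemma \ref{l:rayproperties}(ii), still satisfies the boundary conditions (\ref{r:Scond}), and carries the identical action sum, which equals $S(x_1)$ by constancy of $S$, so $\boldsymbol{x}'\in\mathcal{R}$. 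Each orbit in $C'$ is either the fixed point itself or homoclinic to it and hence has rotation number $0$, so the existence of $C'$ contradicts (N2) and we conclude $\Delta_0>0$.

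The main subtlety will be confirming that the shift really acts as a bijection of $\mathcal{R}\cup\{\boldsymbol{y}_0,\boldsymbol{y}_0+1\}$, producing a single closed curve rather than a multiply-covered loop; this hinges on the injectivity half of Lemma \ref{l:delta0}, i.e.\ that distinct minimizing heteroclinic rays never share an $x_0$-value, which is itself a consequence of the Aubry Fundamental Lemma applied under the assumption $\Delta_0=0$.
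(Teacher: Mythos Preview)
Your proposal is correct and follows essentially the same route as the paper: assume $\Delta_0=0$, invoke Lemma~\ref{l:delta0} to get the homeomorphism $\pi_0$, push $\mathcal{R}\cup\{\boldsymbol{y}_0,\boldsymbol{y}_0+1\}$ to the cylinder via $\iota$ and the projection, and identify the image as a homotopically non-trivial invariant circle of rotation number $0$, contradicting (N2). You supply more detail than the paper does---in particular the explicit observation that $S$ constant forces $\mathcal{R}\subseteq\mathcal{E}$ via Lemma~\ref{l:rayproperties}(iii), and the verification of shift-invariance of $\mathcal{R}$---but these are exactly the steps the paper's two-line proof leaves implicit.
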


\begin{proof} Assume $\Delta_0=0$. 
Consider the image of $\mathcal{R}\cup \lbrace \boldsymbol{y}_0 \rbrace$ under the composition $\hat{\iota}$ of $\iota$ and the standard projection $\mathbb{R}^2 \mapsto \mathbb{S}^1 \times \mathbb{R}$. Lemma \ref{l:delta0} implies that it is a homotopically non-trivial invariant circle of $f$ consisting of orbits with the rotation number $0$, parametrized by $\hat{\iota} \circ \pi_0^{-1}$, which contradicts (N2).
\end{proof}

\section{Positivity of $\Delta_1$} \label{s:delta1} \label{s:positivity}

We now show:

\begin{proposition} \label{p:positive}
	If (N2) holds, then $\Delta_1 \geq \tilde{\Delta}_1 > 0$.
\end{proposition}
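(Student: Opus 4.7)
My plan is to split the statement: first $\Delta_1\ge\tilde{\Delta}_1$, which is trivial, then $\tilde{\Delta}_1>0$ by contradiction combined with a Palais--Smale / Aubry--Mather argument. The inequality $\Delta_1\ge\tilde{\Delta}_1$ is immediate from $\mathcal{N}(e)\subseteq\tilde{\mathcal{N}}(e)$: the infimum of $\|\nabla E(\boldsymbol{h})\|_{l^2}^2$ over the smaller set is at least that over the larger, and the supremum over $e$ preserves the inequality. The substantive claim is $\tilde{\Delta}_1>0$. To argue by contradiction, suppose $\tilde{\Delta}_1=0$. Then the outer supremum being zero forces the inner infimum to vanish at every $e\in[0,\Delta_0/2]$, and by Corollary~\ref{c:25} the interval $(0,\Delta_0/2]$ is nondegenerate. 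Fix any $e\in(0,\Delta_0/2]$ and take a minimizing sequence $\boldsymbol{h}^{(n)}\in\tilde{\mathcal{N}}(e)$ with $\|\nabla E(\boldsymbol{h}^{(n)})\|_{l^2}^2\to 0$.

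The core analytic step is compactness. First I would show $\boldsymbol{h}^{(n)}$ is bounded in $l^2$, using the constraint $E=e$, the twist condition (A2), the uniform $C^2$ bounds (A3), and whatever built-in structure the definition of $\tilde{\mathcal{N}}(e)$ provides. Then extract a weakly convergent subsequence $\boldsymbol{h}^{(n)}\rightharpoonup\boldsymbol{h}^*\in l^2$, which in particular forces coordinatewise convergence $h_k^{(n)}\to h_k^*$. Since each entry $\nabla E(\boldsymbol{h})_k$ depends continuously on only three nearby entries $h_{k-1},h_k,h_{k+1}$ via $V_1,V_2$, pointwise convergence $\nabla E(\boldsymbol{h}^{(n)})_k\to\nabla E(\boldsymbol{h}^*)_k$ follows, and Fatou combined with $\|\nabla E(\boldsymbol{h}^{(n)})\|_{l^2}^2\to 0$ forces $\nabla E(\boldsymbol{h}^*)=0$. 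Continuity of $E$ on bounded $l^2$-sets, obtained from a second-order Taylor expansion exploiting $\nabla E(\mathbf{0})=0$ (because $\boldsymbol{z}\in\mathcal{E}$) together with the uniform second-derivative bounds from (A3) and a termwise dominated convergence argument, then gives $E(\boldsymbol{h}^*)=e$.

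With the limit in hand, $\boldsymbol{z}+\boldsymbol{h}^*\in\mathcal{E}$ is a critical configuration, and since $h_k^*\to 0$ as $|k|\to\infty$ (automatic for $l^2$ sequences) it is a genuine homoclinic orbit to $(y_0,p_0)$ with action excess exactly $e$. A standard Aubry--Mather argument using (A2) upgrades its one-sided rays to action-minimizing, so $\boldsymbol{z}+\boldsymbol{h}^*\in\mathcal{R}$ with $S(z_0+h_0^*)=e$. Provided the definition of $\tilde{\mathcal{N}}(e)$ forces this limit to be distinct from both $\boldsymbol{z}$ and $\tilde{\boldsymbol{z}}$, letting $e$ sweep $(0,\Delta_0/2]$ produces a continuum of distinct minimizing rays in $\mathcal{R}$; combined with the total ordering of Lemma~\ref{l:ordered}, this continuum glues with $\boldsymbol{y}_0,\boldsymbol{y}_0+1$ into the graph of a homotopically nontrivial invariant circle of rotation number $0$, as in the proof of Lemma~\ref{l:delta0} and Corollary~\ref{c:25}, contradicting (N2).

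The principal obstacle I expect is the compactness step. Without the explicit form of $\tilde{\mathcal{N}}(e)$, the constraint $E(\boldsymbol{h})=e$ alone is unlikely to give $l^2$-control: $E$ is saddle-type near $\boldsymbol{z}$ rather than coercive, and in particular translates of a given $\boldsymbol{h}$ along $\mathbb{Z}$ could preserve $E$ while escaping to infinity and destroying the $l^2$ bound. The definition of $\tilde{\mathcal{N}}(e)$ must therefore break this translation symmetry---for instance by fixing $h_0$ or by an ordering/monotonicity condition relative to $\boldsymbol{z},\tilde{\boldsymbol{z}}$---and the crux of the argument is to exploit that structure both to obtain genuine Palais--Smale compactness and to guarantee that the limiting critical configuration is not identically one of the two already known homoclinics.
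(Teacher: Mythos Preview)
Your reduction $\Delta_1\ge\tilde{\Delta}_1$ via $\mathcal{N}(e)\subseteq\tilde{\mathcal{N}}(e)$ matches the paper. For $\tilde{\Delta}_1>0$, however, the paper takes a much shorter and structurally different route. First, the compactness obstacle you flag does not exist: by definition $\tilde{\mathcal{N}}(e)$ consists of those $\boldsymbol{h}\in l^2(\mathbb{Z})$ with $E(\boldsymbol{h})=e$ \emph{and} $|h_j|\le u_j=2\kappa_1\lambda^{-|j|}$ for all $j$. The exponential envelope $\boldsymbol{u}\in l^2$ makes $\tilde{\mathcal{N}}(e)$ compact in $l^2$ outright, with no Palais--Smale analysis needed. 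Second, rather than arguing by contradiction over all $e$, the paper shows (Lemma~\ref{l:eprop}) that $E$ is $C^2$ with $\dim\operatorname{Ker}D^2E(\boldsymbol{h})\le 2$, then invokes an infinite-dimensional Morse--Sard theorem due to Poho\v{z}aev to conclude that the set of critical values of $E$ has Lebesgue measure zero. Hence there is a \emph{single} regular value $e_0\in(0,\Delta_0/2)$; on the compact set $\tilde{\mathcal{N}}(e_0)$ the continuous function $\|\nabla E\|_{l^2}^2$ is everywhere positive, so its infimum is positive, and $\tilde{\Delta}_1\ge\inf_{\tilde{\mathcal{N}}(e_0)}\|\nabla E\|_{l^2}^2>0$.

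Your proposed route has a genuine gap at the step ``a standard Aubry--Mather argument using (A2) upgrades its one-sided rays to action-minimizing.'' The limit $\boldsymbol{z}+\boldsymbol{h}^*$ you produce is merely a critical point of the action, i.e.\ an orbit of $f$ homoclinic to $(y_0,p_0)$; there is no mechanism forcing such an orbit to be minimizing on rays, and in general it will not be. Without minimality the Aubry ordering lemma does not apply, the configurations need not be mutually ordered, and there is no way to assemble them into an invariant circle as in Lemma~\ref{l:delta0}. Moreover, even if the rays were minimizing you would only get $S(z_0+h_0^*)\le e$, not equality, so distinct values of $e$ could yield the same homoclinic and the ``continuum'' could collapse. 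In effect your contradiction is trying to rule out an interval of critical values of $E$ by bare hands; the paper instead imports exactly the right tool (Morse--Sard--Poho\v{z}aev), which needs only the finite-dimensionality of $\operatorname{Ker}D^2E$ and dispatches the problem in one line.
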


The main tool will be an infinite-dimensional version of the Morse-Sard lemma valid for operators on $l^2(\mathbb{Z})$, due to Poho\v{z}aev \cite{Pohozaev:68}. We first establish some basic properties of the functional $E : l^2(\mathbb{Z}) \rightarrow \mathbb{R}$ defined by (\ref{d:e}), and then define the sets $\mathcal{N}(e)$, $\tilde{\mathcal{N}}(e)$, thus completing the definition of $\Delta_1$, $\tilde{\Delta}_1$. Following that, we recall the Morse-Sard-Poho\v{z}aev lemma, and then complete the proof by a simple compactness argument.

\begin{lemma} \label{l:eprop}
	(i) $E: l^2(\mathbb{Z}) \rightarrow \mathbb{R}$ is well-defined and $C^2$.
	
	(ii) For all $\boldsymbol{h},\boldsymbol{x} \in l^2(\mathbb{Z})$, we have $\nabla E(\boldsymbol{h}) \in l^2(\mathbb{Z})$, and $DE(\boldsymbol{h})\boldsymbol{x}= \langle \nabla E(\boldsymbol{h}),\boldsymbol{x} \rangle $ (the scalar product on $l^2(\mathbb{Z})$).
	
	(iii) For all $\boldsymbol{h} \in l^2(\mathbb{Z})$, the Kernel of $D^2E(\boldsymbol{h})$ is at most 2-dimensional.
\end{lemma}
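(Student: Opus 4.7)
The plan is to reduce every claim to three ingredients: the uniform bounds on the second derivatives of $V$ from (A3), the twist condition (A2), and the fact that the configuration $\boldsymbol{z}$ is a critical point of the action, so $V_2(z_{k-1},z_k)+V_1(z_k,z_{k+1})=0$ for every $k\in\mathbb{Z}$ (this is $\boldsymbol{z}\in\mathcal{E}$).

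For (i), I would expand each term of the defining series by Taylor's theorem around $(z_k,z_{k+1})$:
\[
V(z_k+h_k,z_{k+1}+h_{k+1})-V(z_k,z_{k+1})=V_1(z_k,z_{k+1})h_k+V_2(z_k,z_{k+1})h_{k+1}+R_k(\boldsymbol{h}),
\]
where the quadratic remainder $R_k$ is a weighted combination of $h_k^2$, $h_kh_{k+1}$, $h_{k+1}^2$ with coefficients bounded by the suprema of $V_{11},V_{12},V_{22}$, which are finite by (A3). Hence $\sum_k|R_k(\boldsymbol{h})|\le C\|\boldsymbol{h}\|_{l^2}^2$ converges absolutely. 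For the linear part I would pass to symmetric partial sums $S_N=\sum_{k=-N}^{N-1}(\cdots)$, re-index the sum containing $h_{k+1}$ so that both linear pieces are indexed by $h_k$, and collect coefficients: the interior terms cancel via the Euler--Lagrange identity for $\boldsymbol{z}$, leaving only two boundary contributions $V_1(z_{-N},z_{-N+1})h_{-N}$ and $V_2(z_{N-1},z_N)h_N$. Since $V_1,V_2$ are continuous and therefore bounded on the compact rectangle $[y_0,y_0+1]^2$ which contains every $(z_k,z_{k+1})$, and $h_{\pm N}\to 0$ (because $\boldsymbol{h}\in l^2$ forces $h_k\to 0$), these boundary terms vanish as $N\to\infty$. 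So $E(\boldsymbol{h})=\sum_k R_k(\boldsymbol{h})$ is well-defined with $|E(\boldsymbol{h})|\le C\|\boldsymbol{h}\|_{l^2}^2$, and $C^2$-regularity follows by applying dominated convergence to the first and second formal variations, again using the uniform second-derivative bounds.

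For (ii), one-variable Taylor expansion of $V_1$ and $V_2$ around $(z_k,z_{k+1})$ and $(z_{k-1},z_k)$, combined with $\nabla E(0)_k=0$ from Euler--Lagrange, yields
\[
|\nabla E(\boldsymbol{h})_k|\le C(|h_{k-1}|+|h_k|+|h_{k+1}|),
\]
so $\nabla E(\boldsymbol{h})\in l^2(\mathbb{Z})$ with norm controlled by $\|\boldsymbol{h}\|_{l^2}$. The scalar-product formula $DE(\boldsymbol{h})\boldsymbol{x}=\langle\nabla E(\boldsymbol{h}),\boldsymbol{x}\rangle$ is then obtained by differentiating the series for $E$ term by term along a direction $\boldsymbol{x}\in l^2(\mathbb{Z})$, which is justified by the same quadratic remainder bound from (i). For (iii), the Hessian $D^2E(\boldsymbol{h})$ acts on $\boldsymbol{x}\in l^2(\mathbb{Z})$ as a tridiagonal Jacobi operator: $(D^2E(\boldsymbol{h})\boldsymbol{x})_k$ involves $x_{k-1}$, $x_k$ and $x_{k+1}$ with the coefficient of $x_{k+1}$ equal to $V_{12}(z_k+h_k,z_{k+1}+h_{k+1})$. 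The twist condition (A2) forces this coefficient to be $\le -\delta<0$, so any $\boldsymbol{x}\in\ker D^2E(\boldsymbol{h})$ satisfies a non-degenerate three-term linear recurrence solvable in both directions from any two consecutive entries. Hence the kernel in $\mathbb{R}^{\mathbb{Z}}$ is exactly two-dimensional, and the $l^2$-kernel is a subspace of it, hence at most two-dimensional.

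The one genuine subtlety is in (i): the series defining $E$ is not absolutely convergent as written, so one must carefully handle the rearrangement of the linear part via symmetric partial sums together with the Euler--Lagrange cancellation for $\boldsymbol{z}$. Everything else is a routine consequence of the uniform second-derivative bounds from (A3) and the non-vanishing of $V_{12}$ from (A2).
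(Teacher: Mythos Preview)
Your argument is correct and rests on the same three ingredients as the paper's proof: the uniform bounds on second derivatives from (A3), the Euler--Lagrange identity $\nabla E(\boldsymbol{0})=0$ coming from $\boldsymbol{z}\in\mathcal{E}$, and the twist condition (A2) for the kernel bound. The only difference is packaging: the paper introduces the tridiagonal Hessian operator $A(\boldsymbol{h})$ up front, shows it is bounded on $l^2(\mathbb{Z})$ and continuous in $\boldsymbol{h}$, and then writes $E(\boldsymbol{h})=\tfrac{1}{2}\langle A(\boldsymbol{c})\boldsymbol{h},\boldsymbol{h}\rangle$ and $\nabla E(\boldsymbol{h})=A(\boldsymbol{c}(\cdot))\boldsymbol{h}$ via the integral-remainder form of Taylor, whereas you expand term by term and handle the linear part explicitly through symmetric partial sums and the telescoping Euler--Lagrange cancellation. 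Your treatment of (i) is in fact more careful about the convergence issue you flag as the ``one genuine subtlety''; the paper's mean-value formulation presupposes that $E$ and $\nabla E$ make sense before invoking $\nabla E(\boldsymbol{0})=0$, which your partial-sum argument avoids. For (iii) the two proofs are identical.
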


\begin{proof}
Define for any $\boldsymbol{h} \in l^2(\mathbb{Z})$ a tridiagonal $\mathbb{Z} \times \mathbb{Z}$ matrix $A(\boldsymbol{h})$ given with $A=[a_{i,j}]_{i,j \in \mathbb{Z}}$,
\begin{equation*}
a_{i,j}= \begin{cases} V_{12}(z_{i-1}+h_{i-1},z_i+h_i) & j=i-1, \\
 V_{22}(z_{i-1}+h_{i-1},z_i+h_i) +  V_{11}(z_i+h_i,z_{i+1}+h_{i+1}) & j=i, \\
 V_{12}(z_i+h_i,z_{i+1}+h_{i+1}) & j=i+1, \\
 0 & \text{otherwise}.
 \end{cases}
\end{equation*} 
The condition (A3) implies that $\boldsymbol{x} \rightarrow A(\boldsymbol{h})\boldsymbol{x}$ is a well-defined continuous linear operator on $l^2(\mathbb{Z})$, with the norm
$$||A(\boldsymbol{h})||_{l^2(\mathbb{Z})} \leq 3 \sup_{i \in \mathbb{Z}} (|V_{11}((z_i+h_i,z_{i+1}+h_{i+1})|+|V_{12}((z_i+h_i,z_{i+1}+h_{i+1})|+|V_{22}((z_i+h_i,z_{i+1}+h_{i+1})|).
$$
An analogous estimate of the norm of $A(\tilde{\boldsymbol{h}})-A(\boldsymbol{h})$ and uniform continuity in (A3) implies that $\boldsymbol{h} \mapsto A(\boldsymbol{h})$ is continuous. Now as $E(\boldsymbol{0})=0$ and $\nabla E(\boldsymbol{0})=0$ (as $\boldsymbol{z} \in \mathcal{E}$), it is easy to verify by that for any $\boldsymbol{x} \in l^2(\mathbb{Z})$,
\begin{equation*}
E(\boldsymbol{h})  = \frac{1}{2}\langle A(\boldsymbol{c})\boldsymbol{h},\boldsymbol{h} \rangle, \hspace{5ex}
\nabla E(\boldsymbol{h})\boldsymbol{x}  = A(\boldsymbol{c}(\boldsymbol{x}))\boldsymbol{x}
\end{equation*}
for some $\boldsymbol{c},\boldsymbol{c}(\boldsymbol{x}) \in [0,\boldsymbol{h}]$, thus $E(\boldsymbol{h})$ and 
$\nabla E(\boldsymbol{h})\boldsymbol{x}$ are well defined. By straightforward calculation it follows that $\nabla E(\boldsymbol{h})$ and $A(\boldsymbol{h})$ are representatives in $l^2(\mathbb{Z})$, resp. $L(l^2(\mathbb{Z}),l^2(\mathbb{Z}))$ of the first, respectively second differential of $E$ at $\boldsymbol{h}$, which gives the claims (i) and (ii).

We now note that $\text{Ker }A(\boldsymbol{h})$ corresponds to the set of tangential orbits at $\boldsymbol{z}+\boldsymbol{h}$, thus the set $\boldsymbol{x} \in \mathcal{X}$, $A(\boldsymbol{h})\boldsymbol{x} = 0$ is homeomorphic to $\mathbb{R}^2$ (equivalently, one can uniquely solve $A(\boldsymbol{h})\boldsymbol{x} =0$ for $ \boldsymbol{x} \in \mathcal{X}$ given any $x_0,x_1 \in \mathbb{R}$ by applying (A2) and the definition of $A(\boldsymbol{h})$), which completes the proof of (iii).
\end{proof}
Let $ u_j = 2\kappa_1 \lambda^{-|j|}, j \in \mathbb{Z}$, where $\kappa_1,\lambda$ are as in (\ref{r:condhyp}). Clearly $\boldsymbol{u}\in l^2(\mathbb{Z})$. Define $\tilde{\mathcal{N}}(e)$ as the set of all $\boldsymbol{h} \in l^2(\mathbb{Z}) \cap \mathcal{X}_2$ such that $E(\boldsymbol{h})=e$, and such that for all $j \in \mathbb{Z}$, $|h_j| \leq u_j$. Let $\mathcal{C}(e)$ be the connected component of $0$ of the set $\lbrace \boldsymbol{h} \in l^2(\mathbb{Z}), E(\boldsymbol{h}) \leq e \rbrace$, and let
$$ \mathcal{N}(e)= \tilde{\mathcal{N}}(e) \cap \mathcal{C}(e).$$
It is easy to verify that $\tilde{\mathcal{N}}(e)$ is a compact subset of $l^2(\mathbb{Z})$, and that $\mathcal{N}(e)$ is a closed subset of $\tilde{\mathcal{N}}(e)$, thus compact.

Recall the Morse-Sard-Poho\v{z}aev lemma \cite{Pohozaev:68} holding for Fredholm functionals on real, separable, reflexive Banach spaces, thus valid on $l^2(\mathbb{Z})$:
\begin{lemma} \label{l:pohozaev}
Assume that $\tilde{E}:l^2(\mathbb{Z}) \rightarrow \mathbb{R}$ is a real $C^k$ functional, that $\dim (\operatorname{Ker} D^2  {\tilde{E}}(\boldsymbol{h})) \leq m < \infty$ for any $\boldsymbol{h} \in l^2(\mathbb{Z})$, and that $k \geq \max \lbrace m,2 \rbrace$. Then the set of critical values of $\tilde{E}$ has Lebesgue measure 0.
\end{lemma}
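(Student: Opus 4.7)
The plan is to perform a local Lyapunov--Schmidt reduction at each critical point of $\tilde{E}$, reducing $\tilde{E}$ locally to a $C^k$ function on a finite-dimensional subspace of dimension at most $m$, and then to invoke the classical finite-dimensional Morse--Sard theorem. Separability of $l^2(\mathbb{Z})$ will then glue the local conclusions into the global measure-zero statement for the image $\tilde{E}(C)$ of the critical set $C=\lbrace \boldsymbol{h} \in l^2(\mathbb{Z}) : \nabla \tilde{E}(\boldsymbol{h}) = 0 \rbrace$.

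Fix a candidate point $\boldsymbol{h}_0 \in l^2(\mathbb{Z})$; non-critical points contribute nothing by continuity of $\nabla \tilde{E}$, so I take $\boldsymbol{h}_0 \in C$. The Fredholm assumption, together with self-adjointness of the Hessian $L := D^2\tilde{E}(\boldsymbol{h}_0)$, gives an orthogonal decomposition $l^2(\mathbb{Z}) = K \oplus K^{\perp}$, with $K = \operatorname{Ker} L$ of dimension $d \leq m$ and $L$ restricting to an isomorphism of $K^{\perp}$ onto itself (since for self-adjoint Fredholm $L$ one has $\operatorname{Range}(L) = K^{\perp}$). Writing $\boldsymbol{h} = \boldsymbol{h}_0 + \boldsymbol{a} + \boldsymbol{b}$ with $\boldsymbol{a} \in K$, $\boldsymbol{b} \in K^{\perp}$, and denoting by $P$ the orthogonal projection onto $K^{\perp}$, I would apply the Banach-space implicit function theorem to the $C^{k-1}$ map $F(\boldsymbol{a},\boldsymbol{b}) = P\,\nabla \tilde{E}(\boldsymbol{h}_0 + \boldsymbol{a} + \boldsymbol{b})$, whose $\boldsymbol{b}$-derivative at the origin equals $L|_{K^{\perp}}$ and is therefore invertible. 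This produces a $C^{k-1}$ map $\phi$ from an open neighborhood $V$ of $0$ in $K$ into $K^{\perp}$ with $\phi(0)=0$ and $F(\boldsymbol{a},\phi(\boldsymbol{a}))\equiv 0$.

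Next I define the reduced functional $\Phi:V \rightarrow \mathbb{R}$ by $\Phi(\boldsymbol{a}) = \tilde{E}(\boldsymbol{h}_0 + \boldsymbol{a} + \phi(\boldsymbol{a}))$. Differentiating, using $P\nabla \tilde{E}(\boldsymbol{h}_0+\boldsymbol{a}+\phi(\boldsymbol{a}))=0$ and $\operatorname{Range}(D\phi(\boldsymbol{a})) \subseteq K^{\perp}$, I read off the identity
\begin{equation*}
\nabla \Phi(\boldsymbol{a}) \;=\; (I-P)\,\nabla \tilde{E}\bigl(\boldsymbol{h}_0 + \boldsymbol{a} + \phi(\boldsymbol{a})\bigr),
\end{equation*}
which is $C^{k-1}$ in $\boldsymbol{a}$, so that $\Phi$ is $C^k$ on $V$. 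The same identity shows that $\boldsymbol{a}$ is a critical point of $\Phi$ if and only if $\boldsymbol{h}_0+\boldsymbol{a}+\phi(\boldsymbol{a})$ is a critical point of $\tilde{E}$, and the two critical values coincide. Since $K \cong \mathbb{R}^d$ with $d \leq m$ and $k \geq m \geq d$, the classical finite-dimensional Morse--Sard theorem applied to $\Phi$ yields that the critical values of $\Phi$ form a Lebesgue null set of $\mathbb{R}$, which contains $\tilde{E}(C \cap U_0)$ for some neighborhood $U_0$ of $\boldsymbol{h}_0$. Separability of $l^2(\mathbb{Z})$ allows me to cover all of $l^2(\mathbb{Z})$ by countably many such neighborhoods, and countable unions of null sets are null.

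The hard part is the Lyapunov--Schmidt step and tracking regularity: one must check that (i) the self-adjoint Fredholm Hessian genuinely provides the topological direct sum $l^2(\mathbb{Z}) = K \oplus K^{\perp}$ with $L|_{K^{\perp}}$ invertible, so the implicit function theorem applies; (ii) the implicit function theorem delivers $\phi$ with the claimed $C^{k-1}$ regularity on an actual open neighborhood, not merely a germ; and crucially (iii) the identity for $\nabla \Phi$ indeed upgrades the naive $C^{k-1}$ regularity of the composition $\Phi$ to $C^k$, which is exactly what is needed to match the sharp finite-dimensional Morse--Sard threshold $k \geq d$. Once these points are verified, the Lindel\"of covering and the classical Morse--Sard theorem finish the argument in a routine way.
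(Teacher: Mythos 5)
The paper itself offers no proof of this lemma: it is quoted as the Morse--Sard--Poho\v{z}aev theorem and attributed to the reference [Poho\v{z}aev 1968], so there is no internal argument to compare against. What you propose --- a Lyapunov--Schmidt reduction at each critical point followed by the classical finite-dimensional Morse--Sard theorem and a Lindel\"of covering --- is essentially the standard proof of that cited result, and your execution is sound: the splitting $l^2(\mathbb{Z})=K\oplus K^{\perp}$ with $L|_{K^{\perp}}$ an isomorphism, the $C^{k-1}$ implicit function $\phi$, the identity $\nabla\Phi(\boldsymbol{a})=(I-P)\nabla\tilde{E}(\boldsymbol{h}_0+\boldsymbol{a}+\phi(\boldsymbol{a}))$ which upgrades the reduced functional $\Phi$ to $C^k$ even though $\phi$ is only $C^{k-1}$ (this is exactly the point needed to reach the sharp Morse--Sard threshold $k\geq d$ on $K\cong\mathbb{R}^d$, $d\leq m$), the identification of critical points and values of $\Phi$ with those of $\tilde{E}$ near $\boldsymbol{h}_0$ via the uniqueness part of the implicit function theorem, and the countable covering by separability. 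So the proposal gives a correct, self-contained proof of a statement the paper only cites.

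One caveat you should make explicit: the hypotheses displayed in the lemma only bound $\dim\operatorname{Ker}D^2\tilde{E}(\boldsymbol{h})$, whereas your step (i) also needs $D^2\tilde{E}(\boldsymbol{h}_0)$ to have closed range, i.e.\ to be Fredholm; otherwise $L|_{K^{\perp}}$ is injective with dense but proper range (think of a self-adjoint operator with $0$ in its essential spectrum, such as the discrete Laplacian on $l^2(\mathbb{Z})$), and the implicit function theorem is unavailable. This is precisely what the paper's preceding sentence means by ``Fredholm functionals'', so your reading matches the intended statement, but the Fredholm property should be stated as a hypothesis of your argument rather than invoked silently.
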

Here the critical value of $\tilde{E}$ is any $e \in \mathbb{R}$ such that for some $\boldsymbol{h} \in l^2(\mathbb{Z})$, $D\tilde{E}(\boldsymbol{h})=0$. Clearly by Lemma \ref{l:eprop}, Lemma \ref{l:pohozaev} is applicable to $\tilde{E}=E$ with $k=m=2$.

First note that $\Delta_1 \geq \tilde{\Delta}_1$ follows from the definition (\ref{d:Delta1}) and $\mathcal{N}(e) \subseteq \tilde{\mathcal{N}}(e)$. By Lemma \ref{l:pohozaev}, we now find any non-critical value $0< e_0 < e$ of $E$. As now $||\nabla E(\boldsymbol{h})||^2_{l^2(\mathbb{Z})} > 0$ on $\tilde{\mathcal{N}}(e_0)$, $\boldsymbol{h} \mapsto ||\nabla E(\boldsymbol{h})||^2_{l^2(\mathbb{Z})}$ is continuous and $\tilde{\mathcal{N}}(e_0)$ is compact, we can bound $||\nabla E(\boldsymbol{h})||^2_{l^2(\mathbb{Z})}$ from below away from $0$ on $\tilde{\mathcal{N}}(e_0)$ which completes the proof of Proposition \ref{p:positive}.

\section{Interpretation of $\Delta_1$} \label{s:interpretation}

We now explicitly relate $\Delta_1$ to the angle of splitting of separatrices. We do it by recalling the notion of phonon gap, denoted by $\Delta_2$ below. We first relate $\Delta_2$ to the minimal angle of splitting of stable and unstable bundles along the orbit of the minimizing homoclinic by using the results of Aubry, MacKay and Baesens \cite{Aubry:92}, then show that $\Delta_1 \gg \Delta_2^4$, and finally state a conjecture on the relation of $\Delta_1$ and $\Delta_0$.

Let
\begin{equation}
 \Delta_2 = || D^2 E(\boldsymbol{0})^{-1} ||_{l^2(\mathbb{Z})}^{-1} \label{r:def2}
\end{equation}
We have shown in Lemma \ref{l:eprop},(i) that $E$ is $C^2$, thus $\Delta_2$ is well defined whenever $D^2 E(\boldsymbol{0})$ is invertible; we set $\Delta_2=0$ otherwise. The quantity $\Delta_2$ is exactly the phonon gap of the 'equilibrium state' corresponding to the minimizing homoclinic $(z,p)$, as defined in \cite{Aubry:92}.

\begin{remark} \label{r:41} Assume (N1) and that the stable and unstable manifold of $(y_0,p_0)$ intersect transversally at $(z,p)$. Then as shown in \cite[Proposition 2, (2.56)]{Aubry:92}, $\Delta_2$ can be explicitly bounded from below by
	$$
	\Delta_2 \geq \frac{\lambda - 1}{2 \kappa_1 \alpha \kappa_4},
	$$
where $\lambda, \kappa_1$ are as in (\ref{r:condhyp}), and $\alpha$, $\kappa_4$ are constants inverse proportional to the minimal angle of splitting of stable and unstable bundles along the orbit of $(z,p)$, and constants depending only on the second derivatives of $V$ (refer to \cite{Aubry:92} for a precise definition of $\alpha$, $\kappa_4$). Note that the angle of splitting of stable and unstable bundles is typically minimal exactly at $(z,p)$ \cite{Gelfreich:99}.
\end{remark} 

\begin{lemma} Assume (N1),(N2), and that $V$ is $C^3$. Then
	\begin{equation}
	 \Delta_1 \geq \kappa_5 \Delta_2^4,   
	\end{equation}
where $\kappa_5$ is an explicit constant depending on the third derivatives of $V$.
\end{lemma}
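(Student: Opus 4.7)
The plan is to exploit the $C^3$ regularity of $V$ to Taylor-expand $E$ around $\boldsymbol{0}$ to third order, and to combine the quadratic lower bound on $E$ with a virial-type identity for $\langle \nabla E(\boldsymbol{h}), \boldsymbol{h}\rangle$ via Cauchy--Schwarz. First I would dispose of the trivial case $\Delta_2 = 0$ and then observe that since $\boldsymbol{z}$ is action-minimizing, the Hessian $A_0 := D^2 E(\boldsymbol{0})$ is positive semidefinite on $l^2(\mathbb{Z})$: for any finitely supported $\boldsymbol{h}$ the sum defining $E(\boldsymbol{h})$ reduces to a finite action difference which is non-negative by minimality, and density combined with the $C^2$ regularity of $E$ (Lemma \ref{l:eprop}) extends $E \geq 0$ to all of $l^2(\mathbb{Z})$. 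Assuming $\Delta_2 > 0$, invertibility places the bottom of the spectrum of the symmetric operator $A_0$ at $\Delta_2$, giving the lower bound $\langle A_0 \boldsymbol{h}, \boldsymbol{h}\rangle \geq \Delta_2 \|\boldsymbol{h}\|_{l^2}^2$.

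Next, the envelope condition $|h_j| \leq u_j = 2\kappa_1 \lambda^{-|j|}$ defining $\tilde{\mathcal{N}}(e)$ confines $\boldsymbol{z}+\boldsymbol{h}$ to an $l^\infty$-bounded set on which the third derivatives of $V$ are uniformly bounded by a constant $c_3$. An argument parallel to Lemma \ref{l:eprop}, promoted to third order, yields the Taylor remainders
\[
\bigl| E(\boldsymbol{h}) - \tfrac{1}{2}\langle A_0 \boldsymbol{h}, \boldsymbol{h}\rangle \bigr| \leq c_0 \|\boldsymbol{h}\|_{l^2}^3, \qquad \bigl| \langle \nabla E(\boldsymbol{h}), \boldsymbol{h}\rangle - \langle A_0 \boldsymbol{h}, \boldsymbol{h}\rangle \bigr| \leq c_0' \|\boldsymbol{h}\|_{l^2}^3,
\]
with $c_0, c_0'$ depending only on $c_3$. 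Subtracting gives the virial-type bound $\langle \nabla E(\boldsymbol{h}), \boldsymbol{h}\rangle \geq 2E(\boldsymbol{h}) - c_1 \|\boldsymbol{h}\|_{l^2}^3$. Combined with the first inequality, for any $\boldsymbol{h} \in \tilde{\mathcal{N}}(e)$ with $\|\boldsymbol{h}\|_{l^2} \ll \Delta_2$, the bound $E(\boldsymbol{h}) \geq \tfrac{\Delta_2}{2}\|\boldsymbol{h}\|_{l^2}^2 - c_0 \|\boldsymbol{h}\|_{l^2}^3$ rearranges to $\|\boldsymbol{h}\|_{l^2}^2 \leq 4e/\Delta_2$.

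The finale is to optimize the choice of $e$. Cauchy--Schwarz yields
\[
\|\nabla E(\boldsymbol{h})\|_{l^2} \geq \frac{\langle \nabla E(\boldsymbol{h}), \boldsymbol{h}\rangle}{\|\boldsymbol{h}\|_{l^2}} \geq \frac{2e - c_1 \|\boldsymbol{h}\|_{l^2}^3}{\|\boldsymbol{h}\|_{l^2}}.
\]
Setting $e := c \Delta_2^3$ with $c$ a small absolute constant depending only on $c_0, c_0', c_1$ (and in the substantive regime $c\Delta_2^3 \leq \Delta_0/2$), the upper bound $\|\boldsymbol{h}\|_{l^2} \leq 2\sqrt{c}\,\Delta_2$ makes the numerator at least $c\Delta_2^3$, so $\|\nabla E(\boldsymbol{h})\|_{l^2}^2 \geq (c/4)\Delta_2^4$. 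Taking the infimum over $\boldsymbol{h} \in \mathcal{N}(e) \subseteq \tilde{\mathcal{N}}(e)$ and absorbing constants into $\kappa_5$ gives the claim.

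The main obstacle, I expect, is justifying the \emph{uniform} third-order Taylor expansion over $\tilde{\mathcal{N}}(e)$, which is $l^\infty$-envelope-bounded but not $l^2$-compact in an operator-norm sense; one needs to extend the tridiagonal matrix calculation of Lemma \ref{l:eprop} to get operator-norm Lipschitz continuity of $\boldsymbol{h} \mapsto A(\boldsymbol{h})$, which is precisely where the $C^3$ hypothesis on $V$ (and uniform boundedness of $V^{(3)}$ on the relevant compact set) enters to yield the cubic remainder. A secondary subtlety is ensuring $\mathcal{N}(e)$ is non-empty at the chosen $e$; this follows because $\boldsymbol{0}$ and $\tilde{\boldsymbol{z}}-\boldsymbol{z}$ (at which $E$ attains $\Delta_0 \geq 2e$) lie in the same connected component of the sublevel set, so intermediate values are realized, or alternatively one works throughout with $\tilde{\mathcal{N}}(e)$ to lower-bound $\tilde{\Delta}_1$ and then uses $\Delta_1 \geq \tilde{\Delta}_1$.
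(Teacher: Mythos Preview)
Your approach is essentially the paper's: Taylor-expand $E$ to third order, use the spectral gap $\Delta_2$ to get $E(\boldsymbol{h})\approx \tfrac{\Delta_2}{2}\|\boldsymbol{h}\|^2$, choose $e\sim\Delta_2^3$, and deduce $\|\nabla E(\boldsymbol{h})\|^2\gtrsim\Delta_2^4$. The only cosmetic difference is that the paper bounds $\|\nabla E(\boldsymbol{h})\|\geq\|A_0\boldsymbol{h}\|-O(\|\boldsymbol{h}\|^2)\geq\Delta_2\|\boldsymbol{h}\|-O(\|\boldsymbol{h}\|^2)$ directly, whereas you go through the virial pairing $\langle\nabla E(\boldsymbol{h}),\boldsymbol{h}\rangle$ and Cauchy--Schwarz; both routes give the same scaling.

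There is, however, one genuine soft spot. Your rearrangement of $e\geq\tfrac{\Delta_2}{2}\|\boldsymbol{h}\|^2-c_0\|\boldsymbol{h}\|^3$ into $\|\boldsymbol{h}\|^2\leq 4e/\Delta_2$ presupposes $\|\boldsymbol{h}\|\ll\Delta_2$, which you state as a hypothesis but never justify for \emph{all} $\boldsymbol{h}$ in the set over which you infimize. This is exactly where the connected-component condition in the definition of $\mathcal{N}(e)$ is used (and why the paper ``recalls the definition of $\mathcal{N}(e_0)$'' at that point): on the sphere $\|\boldsymbol{h}\|=r_0:=\Delta_2/(4c_0)$ the Taylor bound forces $E\geq c\,\Delta_2^3$, so for $e<c\,\Delta_2^3$ the connected component $\mathcal{C}(e)$ of $\boldsymbol{0}$ cannot cross that sphere, giving the a priori bound $\|\boldsymbol{h}\|<r_0$ for every $\boldsymbol{h}\in\mathcal{N}(e)$. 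Your suggested alternative of ``working throughout with $\tilde{\mathcal{N}}(e)$ to lower-bound $\tilde\Delta_1$'' does \emph{not} work: the envelope $|h_j|\leq u_j$ only gives $\|\boldsymbol{h}\|_{l^2}\leq\|\boldsymbol{u}\|_{l^2}=O(1)$, independent of $\Delta_2$, so the cubic remainder need not be dominated by the quadratic term and the rearrangement fails. Drop that alternative and insert the connectedness argument, and your proof is complete.
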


\begin{proof} The constants $\kappa_5, \kappa_6$ used below depend only on the bounds on third derivatives of $V$ on $|x-x'| \leq 3$. As $V$ is $C^3$, it is easy to verify as in the proof of Lemma \ref{p:positive} that $E$ is $C^3$ on the interior of $l^2(\mathbb{Z}) \cap \mathcal{X}_3$. We now see that for any $\boldsymbol{h} \in l^2(\mathbb{Z})\cap \mathcal{X}_2$ (and as $\boldsymbol{z} \in \mathcal{X}_1$) we have that 
\begin{align*}
E(\boldsymbol{h})& =\frac{1}{2}\langle D^2E(\boldsymbol{0})\boldsymbol{h},\boldsymbol{h} \rangle/2 + O_V\left( ||\boldsymbol{h}||_{l^2(\mathbb{Z})}^3\right) \geq \frac{1}{2} \Delta_2 ||\boldsymbol{h}||^2_{l^2(\mathbb{Z})} + O_V\left( ||\boldsymbol{h}||_{l^2(\mathbb{Z})}^3 \right), \\
||\nabla E(\boldsymbol{h}) ||_{l^2(\mathbb{Z})} & =\left| \left| D^2 E(\boldsymbol{0})\boldsymbol{h} \right| \right|_{l^2(\mathbb{Z})} + O_V(||\boldsymbol{h}||_{l^2(\mathbb{Z})}^2)  \geq \Delta_2 ||\boldsymbol{h}||_{l^2(\mathbb{Z})} +  O_V\left(||\boldsymbol{h}||_{l^2(\mathbb{Z})}^2 \right),
\end{align*}
thus 
\begin{align*}
\left|\frac{||\nabla E(\boldsymbol{h}) ||^2_{l^2(\mathbb{Z})} }{2 \Delta_2} - E(\boldsymbol{h}) \right| & \leq \kappa_6 ||\boldsymbol{h}||_{l^2(\mathbb{Z})}^3, \\
\left| E(\boldsymbol{h}) - \frac{\Delta_2}{2}||\boldsymbol{h}||_{l^2(\mathbb{Z})}^2\right|& \leq \kappa_6 ||\boldsymbol{h}||_{l^2(\mathbb{Z})}^3.
\end{align*}
Now it is easy to see that it suffices to choose $e_0 = \Delta_2^3 / 32 \kappa_6^2$ and recall the definition of $\mathcal{N}(e_0)$ in the previous section, to obtain that for each $\boldsymbol{h} \in \mathcal{N}(e_0)$ we have that $||\nabla E(\boldsymbol{h}) ||^2_{l^2(\mathbb{Z})} /(2 \Delta_2) \geq e_0 / 2$, which completes the proof with $\kappa_5 = 1 / (32 \kappa_6^2)$.
\end{proof}

\begin{remark} \label{r:43}
Assuming (N1), (N2), one can similarly establish $\Delta_0 \geq \kappa_7 \Delta_2^3$, where $\kappa_7$ is also an explicit constant depending only on the third derivatives of $V$. The proof is omitted.
\end{remark}

Finally, we conjecture the following:

\begin{conjecture}
For a generic $C^2$ area-preserving twist diffeomorphism, $\Delta_1 \geq \Delta_0^2$. 
\end{conjecture}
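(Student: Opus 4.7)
Under genericity (transverse intersection of the stable and unstable manifolds of $(y_0,p_0)$, in the sense of Remark \ref{r:41}), the picture to exploit is that of a Mountain Pass between the strict nondegenerate local minimum $\boldsymbol{0}$ of $E$ and the nondegenerate Morse-index-$1$ critical point $\tilde{\boldsymbol{h}} := \tilde{\boldsymbol{z}} - \boldsymbol{z}$ at level $\Delta_0$. Two preliminary observations frame the setup: by (\ref{r:condhyp}) applied to both $\boldsymbol{z}$ and $\tilde{\boldsymbol{z}}$ one has $|\tilde{h}_j| \leq 2\kappa_1 \lambda^{-|j|-1} < u_j$, so $\tilde{\boldsymbol{h}}$ lies strictly in the interior of the box defining $\tilde{\mathcal{N}}$, and
\begin{equation*}
 \|\tilde{\boldsymbol{h}}\|_{l^2(\mathbb{Z})} \leq 2\kappa_1 \lambda^{-1}(1-\lambda^{-2})^{-1/2}
\end{equation*}
is an explicit $O(1)$ bound on the diameter of the Mountain Pass; moreover, for $e < \Delta_0$, $\tilde{\boldsymbol{h}} \notin \mathcal{C}(e)$, so $\mathcal{N}(e)$ forms a genuine barrier between $\boldsymbol{0}$ and $\tilde{\boldsymbol{h}}$ inside the constraint box.

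The strategy would be to estimate $\inf_{\mathcal{N}(e)} \|\nabla E\|^2$ at $e = \Delta_0/2$ by coupling two trajectories. First, after choosing $e \in (0, \Delta_0/2]$ noncritical via Lemma \ref{l:pohozaev}, compactness of $\mathcal{N}(e)$ gives an infimizer $\boldsymbol{h}_e$; run the $l^2$-gradient descent $\dot{\boldsymbol{\eta}} = -\nabla E(\boldsymbol{\eta})$ from $\boldsymbol{h}_e$. It stays in $\mathcal{C}(e)$ by construction and, by a comparison with the order-preserving semiflow $\xi$ from Section \ref{s:construction}, inside the box; as $t \to \infty$ it converges to the unique critical point $\boldsymbol{0}$ in the component, tracing a path of $l^2$-length $L_{\mathrm{desc}} = \int_0^e d\tau/\|\nabla E(\boldsymbol{\eta}(\tau))\| \geq \|\boldsymbol{h}_e\|_{l^2(\mathbb{Z})}$. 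Second, run gradient ascent from $\boldsymbol{0}$ along the one-dimensional unstable direction of the saddle to $\tilde{\boldsymbol{h}}$, with total length $L_{\mathrm{asc}} = \int_0^{\Delta_0} d\tau/\|\nabla E\| \leq \|\tilde{\boldsymbol{h}}\|_{l^2(\mathbb{Z})}$. The elementary inequality $\sup_{\text{ascent}} \|\nabla E\| \geq \Delta_0 / L_{\mathrm{asc}}$ locates a point $\boldsymbol{h}'_e$ on $\mathcal{N}(\Delta_0/2)$ (at which the ascent crosses the level set) with $\|\nabla E(\boldsymbol{h}'_e)\| \geq c\Delta_0$, provided that $\|\nabla E\|^2$ is unimodal along the ascent --- a property expected from the Morse geometry around the saddle. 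Minimality of $\boldsymbol{h}_e$ would then seem to transfer the bound to $\|\nabla E(\boldsymbol{h}_e)\|^2 \geq c\Delta_0^2$ and hence $\Delta_1 \geq c\Delta_0^2$.

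The main obstacle --- and presumably what distinguishes conjecture from theorem here --- is transferring that ``somewhere on the ascent'' pointwise bound into a valid pointwise lower bound at the specific infimizer $\boldsymbol{h}_e$ on the $\Delta_0/2$-level set. The naive local quadratic expansion at $\boldsymbol{0}$ gives only $\|\nabla E(\boldsymbol{h}_e)\|^2 \gtrsim \Delta_2 e$, which combined with $\Delta_0 \geq \kappa_7 \Delta_2^3$ from Remark \ref{r:43} bottoms out at $\Delta_1 \gtrsim \Delta_0^{4/3}$, strictly weaker than $\Delta_0^2$; so the argument must exploit the nonlinearity of $E$ well away from both the minimum and the saddle. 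Concretely, one likely needs (i) a construction of an explicit optimal Mountain Pass path whose $l^2$-length is bounded by an $O(1)$ constant uniformly as $V$ degenerates to an integrable limit, and (ii) a Morse-theoretic comparison showing that the minimum of $\|\nabla E\|^2$ on $\mathcal{N}(\Delta_0/2)$ is comparable (not merely bounded above by) the harmonic mean of $\|\nabla E\|^2$ along that path. Identifying the correct form of ``generic'' --- whether plain transversality of separatrices suffices or some quantitative non-flatness at the saddle is required --- is, I believe, inseparable from this estimate, and is where the genuine content of the conjecture resides.
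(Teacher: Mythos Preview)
The statement is a \emph{conjecture}, and the paper offers no proof --- only a two-line rationale immediately following it. That rationale is considerably simpler than your proposal: the author considers the straight-line path $s \mapsto \boldsymbol{h}(s) = s(\tilde{\boldsymbol{z}} - \boldsymbol{z})$ on $[0,1]$, notes that $E(\boldsymbol{h}(1)) - E(\boldsymbol{h}(0)) = \Delta_0$, and asserts that $\Delta_1^{1/2}$ is ``expected to be generically proportional to the maximal derivative of $s \mapsto E(\boldsymbol{h}(s))$''. Since the path has bounded $l^2$-length, the mean-value theorem gives a point where $|dE/ds| \gtrsim \Delta_0$, and hence $\|\nabla E\| \gtrsim \Delta_0$ \emph{somewhere} on the segment. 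The unproved step --- and the author does not claim to prove it --- is that this pointwise bound on the segment transfers to the infimum over $\mathcal{N}(e)$.

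Your approach is more elaborate (gradient descent/ascent trajectories in place of the straight line, explicit Mountain Pass geometry), and you correctly isolate the same obstacle: a bound ``somewhere on a path crossing the level set'' does not automatically descend to the infimizer on $\mathcal{N}(e)$. In that sense your diagnosis of why this is a conjecture and not a theorem agrees with the paper's implicit stance. One small correction: your aside that the quadratic expansion ``bottoms out at $\Delta_1 \gtrsim \Delta_0^{4/3}$'' has the inequality the wrong way round --- Remark~\ref{r:43} gives $\Delta_2 \lesssim \Delta_0^{1/3}$, an \emph{upper} bound on $\Delta_2$, so combining it with $\Delta_1 \gtrsim \Delta_2 \cdot e$ (or $\Delta_2^2 \cdot e$) yields no lower bound on $\Delta_1$ in terms of $\Delta_0$ alone. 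This does not affect your overall assessment, which is sound: neither the paper's heuristic nor yours closes the gap, and identifying the right genericity hypothesis is part of the open problem.
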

The rationale for this is that intuitively $\Delta_1^{1/2}$ is expected to be generically proportional to the maximal derivative of $s \mapsto E(\boldsymbol{h}(s))$ on $[0,1]$, where $s \mapsto \boldsymbol{h}(s)$ is the straight line connecting $\boldsymbol{h}(0)=\boldsymbol{0}$ and $\boldsymbol{h}(1)=\tilde{\boldsymbol{z}}-\boldsymbol{z}$, thus $E(\boldsymbol{h}(1))-E(\boldsymbol{h}(0))=\Delta_0$.

\section{Shadowing invariant measures}

We recall the definition of a shadowing invariant measure, introduced in \cite{Slijepcevic:16}. In this section we will always consider a measurable space $(\Omega,\mathcal{F})$, where $\Omega$ is a compact metric space and $\mathcal{F}$ the Borel $\sigma$-algebra. Let $S$ be a homeomorphism on $\Omega$ and $\mu$ a $S$-invariant probability measure on $(\Omega,\mathcal{F})$. 

\begin{defn} Let $\mathcal{G}$ be a $\sigma$-subalgebra of $\mathcal{F}$. We say that a $S$-invariant Borel-probability measure $\nu$ $\mathcal{G}$-shadows a $S$-invariant probability measure $\mu$ on $(\Omega,\mathcal{F})$, if $\mu$ is a factor of $\nu$, and if for each $\mathcal{D} \in \mathcal{G}$, we have $\mu(\mathcal{D})=\nu(\mathcal{D})$. 
\end{defn}

The notion of shadowing measure is an ergodic-theoretical analogue of the notion of a shadowing orbit, as it will be clear from its application later. The following relation of shadowing to ergodicity will be useful.

\begin{lemma} \label{l:muergodic}
	Assume that $\nu$ $\mathcal{G}$-shadows $\mu$, that $\mu$ is $S$-ergodic, and that $\mathcal{G}$ satisfies the following: for each $\mathcal{D} \in \mathcal{G}$, $\theta^{-1}(\mathcal{M}_1 \cap \mathcal{D}) \subset \mathcal{D}$. Then almost every measure in the ergodic decomposition of $\nu$ $\mathcal{G}$-shadows $\mu$.  
\end{lemma}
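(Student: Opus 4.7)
The plan is to invoke the ergodic decomposition of $\nu$ and then use the structural hypothesis on $\mathcal{G}$ to transfer both the factor relation and the measure identities on $\mathcal{G}$ from $\nu$ down to almost every ergodic component.

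First, I would write the ergodic decomposition
$$\nu=\int_\Omega \nu_\omega\,d\nu(\omega), \qquad \nu_\omega:=\theta(\omega)\in \mathcal{M}_1,$$
so that for every $\mathcal{D}\in\mathcal{G}$ the shadowing identity $\mu(\mathcal{D})=\nu(\mathcal{D})$ unfolds as
$$\mu(\mathcal{D})=\int_\Omega \nu_\omega(\mathcal{D})\,d\nu(\omega).$$
Since $\mu$ is an $S$-ergodic factor of $\nu$ via a measurable intertwining map $\pi$, and push-forward commutes with ergodic decomposition, $\mu=\pi_*\nu=\int \pi_*\nu_\omega\,d\nu(\omega)$ must itself be the ergodic decomposition of $\mu$. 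Ergodicity of $\mu$ makes this decomposition trivial, whence $\pi_*\nu_\omega=\mu$ for $\nu$-a.e.\ $\omega$. This secures the ``factor'' half of the shadowing definition for almost every component.

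The core step uses the hypothesis $\theta^{-1}(\mathcal{M}_1\cap\mathcal{D})\subset\mathcal{D}$ simultaneously for $\mathcal{D}$ and for its complement $\mathcal{D}^c\in\mathcal{G}$. Combined with the basic property of ergodic decomposition that $\nu_\omega$ concentrates on its own level set of $\theta$, i.e.\ $\nu_\omega(\theta^{-1}(\{\nu_\omega\}))=1$, the inclusions yield
$$\nu_\omega(\mathcal{D})\geq \mathbf{1}_{\nu_\omega\in\mathcal{D}}, \qquad \nu_\omega(\mathcal{D}^c)\geq \mathbf{1}_{\nu_\omega\in\mathcal{D}^c},$$
so that $\nu_\omega(\mathcal{D})=\mathbf{1}_{\nu_\omega\in\mathcal{D}}\in\{0,1\}$. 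Substituting back into $\mu(\mathcal{D})=\int \nu_\omega(\mathcal{D})\,d\nu(\omega)$ pins down $\mu(\mathcal{D})\in\{0,1\}$ and in turn forces the pointwise equality $\nu_\omega(\mathcal{D})=\mu(\mathcal{D})$ for $\nu$-a.e.\ $\omega$. To conclude for all $\mathcal{D}\in\mathcal{G}$ at once, I would choose a countable family generating $\mathcal{G}$ (using separability of the Borel structure on the compact metric $\Omega$), intersect the countably many $\nu$-null exceptional sets, and extend to the full $\sigma$-algebra by a standard monotone class argument.

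The main obstacle is making the dichotomy $\nu_\omega(\mathcal{D})\in\{0,1\}$ genuinely watertight: one must delicately pair the ergodic-decomposition support statement with the set-theoretic inclusion, and crucially apply the hypothesis to $\mathcal{D}^c$ as well, which uses that $\mathcal{G}$ is a $\sigma$-algebra and not merely a $\pi$-system. A secondary technical point is the reduction to a countable family generating $\mathcal{G}$, which is harmless in the intended application on a compact metric $\Omega$ but would have to be verified in any other setting where the lemma is invoked.
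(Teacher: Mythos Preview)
Your first step --- pushing the ergodic decomposition of $\nu$ through the factor map and using ergodicity of $\mu$ to conclude that almost every ergodic component $\tilde\nu$ still has $\mu$ as a factor --- is correct and is exactly what the paper does.

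The second half, however, rests on a misreading of the objects. In the lemma, $\theta$ is the \emph{factor map} $\Omega\to\Omega$ (with $\theta^*\nu=\mu$), and $\mathcal{M}_1$ is a Borel subset of $\Omega$ with $\mu(\mathcal{M}_1)=1$; it is not the ergodic-decomposition map and not a set of measures. Your line ``$\nu_\omega:=\theta(\omega)\in\mathcal{M}_1$'' and the subsequent ``$\nu_\omega(\theta^{-1}(\{\nu_\omega\}))=1$'' conflate these, so the displayed inequalities $\nu_\omega(\mathcal{D})\ge\mathbf{1}_{\nu_\omega\in\mathcal{D}}$ are not meaningful as written. More seriously, the conclusion $\mu(\mathcal{D})\in\{0,1\}$ is false in the intended application: there $\mu$ is (isomorphic to) the Bernoulli $(1/2,1/2)$ shift and $\mathcal{G}$ contains cylinder-type sets of $\mu$-measure $1/2$. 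So this route cannot succeed, and the countable-generator / monotone-class patch at the end is addressing a difficulty that should not arise.

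The paper's argument for the second half is both simpler and avoids any per-$\mathcal{D}$ exceptional set. Once you know $\theta^*\tilde\nu=\mu$ for a fixed ergodic component $\tilde\nu$, the hypothesis $\theta^{-1}(\mathcal{M}_1\cap\mathcal{D})\subset\mathcal{D}$ and $\mu(\mathcal{M}_1)=1$ give, for \emph{every} $\mathcal{D}\in\mathcal{G}$,
\[
\tilde\nu(\mathcal{D})\ \ge\ \tilde\nu\bigl(\theta^{-1}(\mathcal{M}_1\cap\mathcal{D})\bigr)\ =\ \mu(\mathcal{M}_1\cap\mathcal{D})\ =\ \mu(\mathcal{D}),
\]
and the same with $\mathcal{D}^c$ in place of $\mathcal{D}$; adding the two forces equality. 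No $\{0,1\}$ dichotomy and no countable generating family are needed.
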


For convenience of the reader, we repeat the short proof from \cite{Slijepcevic:16}.

\begin{proof} Consider the ergodic decomposition of $\nu$, i.e. a Borel-probability measure $\chi$ on the compact, metrizable space of probability measures $\mathcal{M}(\Omega)$ (equipped with the weak$^*$-topology), such that $\chi$-a.e. measure is $S$-invariant and ergodic, and such that the usual representation formula for $\nu$ in terms of $\chi$ holds \cite{Walters}. Then it is straightforward to check by verifying the definition of the ergodic decomposition \cite{Walters} that $(\theta^*)^*\chi$ is the ergodic decomposition of $\mu$, where $(\theta^*)^*$ is the double push defined in a natural way. However, the ergodic decomposition is unique, and as $\mu$ is ergodic, $(\theta^*)^*\chi$ must be concentrated on $\mu$. That means that for $\chi$-a.e. measure $\tilde{\nu}$ (i.e. almost every measure in the ergodic decomposition of $\nu$), we have $\theta^*(\tilde{\nu})=\mu$. By construction, $\mu$ is then a factor of $\tilde{\nu}$.
	
	It remains to show the shadowing property. As $\mu(\mathcal{M}_1)=1$, we have
	\begin{equation}
	\tilde{\nu}(\mathcal{\mathcal{D}}) \geq \tilde{\nu}(\theta^{-1}(\mathcal{M}_1 \cap \mathcal{D})) = \mu ( \mathcal{M}_1 \cap \mathcal{D}) = \mu (\mathcal{D}), \label{r:factor}
	\end{equation}
	and analogously $\tilde{\nu}(\mathcal{D}^c)\geq \mu (\mathcal{D}^c)$. However, $1 = \mu (\mathcal{D})+\mu (\mathcal{D}^c)= \tilde{\nu}(\mathcal{\mathcal{D}}) +\tilde{\nu}(\mathcal{D}^c)=1$. We conclude that the equality in (\ref{r:factor}) must hold.
\end{proof}
As entropy is always non-increasing under factor maps \cite{Walters}, the following follows directly from the definition:

\begin{lemma} If $\nu$ $\mathcal{G}$-shadows $\mu$, then $h_{\nu}(S) \geq h_{\mu}(S)$.
	\label{l:entropy}
\end{lemma}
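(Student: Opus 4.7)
The plan is to extract from the definition of $\mathcal{G}$-shadowing precisely what we need for the entropy inequality, which is very little: only the first clause of Definition 5.1 is actually used. By hypothesis, $\nu$ $\mathcal{G}$-shadows $\mu$, and by Definition 5.1 this entails (among other things) that $\mu$ is a measure-theoretic factor of $\nu$, i.e.\ there exists a measurable map $\theta : (\Omega,\mathcal{F},\nu) \to (\Omega,\mathcal{F},\mu)$ intertwining $S$ with itself and pushing $\nu$ forward to $\mu$. The second clause concerning agreement on $\mathcal{G}$ is irrelevant here and does not need to be touched.

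Once the factor relation is in place, I would invoke the standard textbook fact (cited in the sentence preceding the lemma, attributed to Walters) that measure-theoretic entropy is monotone non-increasing under factor maps: if $(\Omega,\mathcal{F},\mu,S)$ is a factor of $(\Omega,\mathcal{F},\nu,S)$, then $h_{\nu}(S) \geq h_{\mu}(S)$. Applied to $\theta$, this gives the desired inequality $h_{\nu}(S) \geq h_{\mu}(S)$ immediately.

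There is essentially no obstacle to overcome; the lemma is a one-line consequence of the definition plus an off-the-shelf result. The only point worth being careful about is that the factor map $\theta$ is only guaranteed to exist $\nu$-a.e., but this is precisely the setting in which the classical entropy-under-factors inequality is formulated, so no adaptation is required. Accordingly, the proof reduces to a single sentence: \emph{by Definition 5.1, $\mu$ is a factor of $\nu$, and hence $h_{\nu}(S) \geq h_{\mu}(S)$ by monotonicity of entropy under factor maps.}
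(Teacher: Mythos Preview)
Your proposal is correct and matches the paper's own argument exactly: the paper states that the lemma ``follows directly from the definition'' together with the fact that entropy is non-increasing under factor maps, which is precisely what you wrote.
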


\section{Construction of shadowing invariant measures} \label{s:construction}

In this section we introduce the formally gradient semiflow $\xi$ of the action, and then use it to explicitly construct invariant measures of $f$ with a-priori specified ergodic properties. We do it by constructing invariant (Borel probability) measures of $\mathcal{X}$ with respect to the shift $S$, $(S\boldsymbol{x})_k=x_{k+1}$, as equilibria of the induced semiflow $\xi^*$ on the space of $S$-invariant measures on $\mathcal{X}$.

Let $\mathcal{X}_K$, $K \in \mathbb{N}$ to be the subset of all configurations $\boldsymbol{x} \in \mathcal{X}$, such that $\sup_{j \in \mathbb{Z}}|x_j-x_{j+1}| \leq K$. We consider the system of equations on $\mathcal{X}_K$
\begin{equation}
\frac{dx_j(t)}{dt}=-V_2(x_{j-1}(t),x_j(t))-V_1(x_j(t),x_{j+1}(t)). \label{r:flow}
\end{equation}
The flow can be understood as formally gradient, as it can be formally written as
$$ \frac{d\boldsymbol{x}(t)}{dt}=-\nabla \Phi (\boldsymbol{x}(t)),$$
where $\Phi(x)=\sum_{j \in \mathbb{Z}}V(x_j,x_{j+1})$ is a formal sum - the action of $x=(x_j)_{j\in \mathbb{Z}}$. Clearly $\Phi(x)$ is typically not well-defined; and the system of equations (\ref{r:flow}) does not typically generate a gradient flow. Intricacies of its dynamics, and the related theory of extended gradient systems, is discussed in some detail in \cite{Slijepcevic:13}. Let $\hat{\mathcal{X}}=\mathcal{X}/\mathbb{Z}$, where the quotient with respect to $\mathbb{Z}$ is obtained by identifying $\boldsymbol{x}+n$ for all $n\in \mathbb{Z}$, and let $\hat{\mathcal{X}}_K = \mathcal{X}_K /\mathbb{Z}$. Denote by $\hat{S}$ the induced map $S$ on $\hat{\mathcal{X}}$, $\hat{\mathcal{X}}_K$, and let $\hat{\mathcal{E}}=\mathcal{E}/\mathbb{Z}$ and $\hat{\iota}=\hat{\pi} \circ \iota$, $\hat{\pi} : \mathcal{X} \rightarrow \tilde{\mathcal{X}}$ the projection. It is well-known that (\ref{r:flow}) generates continuous semiflows $\xi$ and $\hat{\xi}$ on $\mathcal{X}_K$, resp.  $\hat{\mathcal{X}}_K$ for all $K \in \mathbb{N}$. (In particular, $\mathcal{X}_K$ and $\hat{\mathcal{X}}_K$ are invariant, see e.g. \cite{Slijepcevic:13}.)

Denote by $\mathcal{M}(\hat{\mathcal{A}}, \hat{S})$ the set of Borel probability $\hat{S}$-invariant measures on a Borel-measurable subset $\hat{\mathcal{A}}$ of $\hat{\mathcal{X}}$. Furthermore, the semiflow $\xi$ has the well-known order-preserving property (\cite{Gole:01} and references therein) analogous to the maximum principle for parabolic semilinear scalar differential equations, needed later in the following form for rays:

\begin{lemma} \label{l:order}
	Assume $\tilde{\boldsymbol{x}} \in \mathcal{E}$ and $\lambda >0$. Then:
	(i) If 
	\begin{equation}
	x_{-j}(t)  \leq \tilde{x}_{-j}, \: \: j \geq 0 \label{r:left}
	\end{equation}
	for $t=t_0$, and $x_0(t) \leq \tilde{x}_0$ for $t \in [t_0,t_0+\lambda]$, we have that (\ref{r:left}) holds for all $t \in [t_0,t_0+\lambda]$.
	
	\vspace{1ex}
	\noindent(ii) If
	\begin{equation} 
	x_j(t)  \geq \tilde{x}_j,  \: \: j \geq 1, \label{r:right}
	\end{equation}
	for $t=t_0$, and $x_1(t) \geq \tilde{x}_1$ for $t \in [t_0,t_0+\lambda]$, we have that (\ref{r:left}) holds for all $t \in [t_0,t_0+\lambda]$.
\end{lemma}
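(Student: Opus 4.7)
The plan is to apply the standard discrete-space maximum principle, which is available here because the twist condition (A2) makes the linearization of (\ref{r:flow}) at $\tilde{\boldsymbol{x}}$ a cooperative system. I focus on (i); (ii) is obtained verbatim by reflection. Set $u_j(t) := x_{-j}(t) - \tilde{x}_{-j}$ for $j \geq 0$. Since the defining equation of $\mathcal{E}$ is exactly the vanishing of the right-hand side of (\ref{r:flow}), $\tilde{\boldsymbol{x}}$ is an equilibrium of $\xi$. Subtracting the two evolution equations at site $-j$ and applying the integral form of the mean value theorem to $V_1$ and $V_2$ yields, for $j \geq 1$,
\[
\frac{du_j}{dt}\;=\;p_j(t)\,u_{j+1}(t)\;+\;q_j(t)\,u_j(t)\;+\;r_j(t)\,u_{j-1}(t),
\]
where $p_j(t)$ and $r_j(t)$ are integrals of $-V_{12}$ along the straight interpolating segments, hence bounded below by $\delta > 0$ by (A2), and $|q_j(t)|$ is uniformly bounded by (A3). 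This is the \emph{cooperative} structure. The hypotheses translate to $u_j(t_0) \leq 0$ for all $j \geq 0$ and $u_0(t) \leq 0$ on $[t_0, t_0 + \lambda]$; the target is $u_j(t) \leq 0$ on all of $\{j \geq 0\} \times [t_0, t_0 + \lambda]$.

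The one real obstruction is the noncompactness of the ray: a naive first-crossing argument can fail if the supremum in question is not attained at a finite index. Since $\boldsymbol{x}(t), \tilde{\boldsymbol{x}} \in \mathcal{X}_K$ for some $K$, one has the linear bound $|u_j(t)| \leq 2Kj + |u_0(t)|$, which I defeat with the barrier $v_j^\varepsilon(t) := u_j(t) - \varepsilon(1+j^2)\,e^{M(t-t_0)}$ for $\varepsilon > 0$, with $M$ chosen strictly larger than a uniform upper bound on $(p_j + q_j + r_j) + 2|p_j - r_j| + (p_j + r_j)$, finite by (A3). The quadratic-in-$j$ factor forces $v_j^\varepsilon(t) \to -\infty$ as $j \to \infty$, so any supremum of $v_j^\varepsilon$ on $\{j \geq 0\} \times [t_0, t_0+\lambda]$ is attained; the initial and boundary hypotheses give $v_j^\varepsilon(t_0) \leq -\varepsilon < 0$ and $v_0^\varepsilon(t) < 0$. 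If some $v_j^\varepsilon$ ever reached zero, then at the earliest such time $t^*$ and the attaining index $j^* \geq 1$, a short calculation (whose sign is exactly what the choice of $M$ is designed to guarantee) yields $\tfrac{d}{dt}v_{j^*}^\varepsilon(t^*) < 0$, contradicting the definition of $t^*$. Sending $\varepsilon \to 0^+$ gives $u_j(t) \leq 0$, as required.

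The main technical hurdle is thus the calibration of this barrier: ensuring simultaneously (a) attainment of the supremum at a finite index, demanding a superlinear-in-$j$ weight, and (b) a strict negative sign in the differential inequality at any first-crossing, demanding an exponential-in-$t$ weight with $M$ large enough against the (A3)-bounds on the coefficients. Conceptually the argument is folklore---the discrete-space analogue of the parabolic maximum principle already invoked in the paper---but the calibration is where an error could enter. Part (ii) is handled identically after mirror reflection: set $u_j(t) := \tilde{x}_j - x_j(t)$ for $j \geq 1$, use $x_1(t) \geq \tilde{x}_1$ as the Dirichlet-type datum at $j = 1$, and rerun the cooperative comparison on the right ray.
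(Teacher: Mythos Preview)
The paper does not actually prove Lemma~\ref{l:order}: it states the result as ``well-known,'' cites Gol\'{e} \cite{Gole:01}, and notes the analogy with the parabolic maximum principle. Your proposal is a correct and complete fleshing-out of precisely that maximum-principle argument --- the cooperative linearization via (A2), followed by a Phragm\'en--Lindel\"of style barrier $\varepsilon(1+j^2)e^{M(t-t_0)}$ to force attainment of the supremum on the noncompact ray --- so there is nothing to compare: you have supplied what the paper outsourced to the literature, and by exactly the mechanism the paper names.
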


We first establish why it suffices to consider $\hat{S}$-invariant measures on $\hat{\mathcal{E}}$.

\begin{lemma} \label{l:push}
Let $\mu \in \mathcal{M}(\hat{\mathcal{E}}, \hat{S})$. Then the pushed measure $\hat{\iota}^* \mu$ is an $f$-invariant measure isomorphic to $\mu$.
\end{lemma}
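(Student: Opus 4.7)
The plan is to show that $\hat{\iota}\colon \hat{\mathcal{E}} \to \mathbb{S}^1 \times \mathbb{R}$ is a homeomorphism that conjugates the shift $\hat{S}$ with $f$; the two claims of the lemma (invariance and isomorphism) then follow by a standard pushforward argument.

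First I would verify the $\mathbb{Z}$-equivariance of $\iota$, so that it descends to the quotient. By (A1), for $\boldsymbol{x} \in \mathcal{E}$ and $n \in \mathbb{Z}$, $\boldsymbol{x}+n \in \mathcal{E}$ as well, and
\begin{equation*}
\iota(\boldsymbol{x}+n) = \bigl(x_0+n,\, -V_1(x_0+n,x_1+n)\bigr) = \bigl(x_0+n,\, -V_1(x_0,x_1)\bigr),
\end{equation*}
which is the image of $\iota(\boldsymbol{x})$ under the integer translation of the first coordinate of $\mathbb{R}^2$. Since $\iota$ is already known to be a homeomorphism $\mathcal{E} \to \mathbb{R}^2$ and the $\mathbb{Z}$-quotient maps are continuous and open, $\hat{\iota} = \hat{\pi}\circ\iota$ descends to a homeomorphism $\hat{\mathcal{E}} \to \mathbb{S}^1\times \mathbb{R}$.

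Second, I would check the conjugacy $\hat{\iota} \circ \hat{S} = f \circ \hat{\iota}$. At the level of $\iota$, for $\boldsymbol{x}\in\mathcal{E}$, $\iota(S\boldsymbol{x}) = (x_1, -V_1(x_1,x_2))$; the equilibrium equation at index $1$ gives $-V_1(x_1,x_2) = V_2(x_0,x_1)$, which by the implicit equations defining $f$ is exactly the second coordinate of $F(\iota(\boldsymbol{x}))$. Hence $\iota\circ S = F\circ\iota$ on $\mathcal{E}$, and since both $S$ and $F$ commute with the integer translation this passes to the quotient.

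Third, given a Borel homeomorphism intertwining the two dynamics, the conclusion is routine: for any Borel set $B\subset \mathbb{S}^1\times\mathbb{R}$,
\begin{equation*}
(\hat{\iota}^*\mu)\bigl(f^{-1}B\bigr) = \mu\bigl(\hat{\iota}^{-1}f^{-1}B\bigr) = \mu\bigl(\hat{S}^{-1}\hat{\iota}^{-1}B\bigr) = \mu\bigl(\hat{\iota}^{-1}B\bigr) = (\hat{\iota}^*\mu)(B),
\end{equation*}
so $\hat{\iota}^*\mu$ is $f$-invariant; and since $\hat{\iota}$ is a homeomorphism, it is a bimeasurable bijection, giving an isomorphism of measure-preserving systems $(\hat{\mathcal{E}},\mu,\hat{S}) \cong (\mathbb{S}^1\times\mathbb{R},\hat{\iota}^*\mu,f)$. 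There is no real obstacle here; the statement is essentially an unpacking of the definition of $\iota$ together with the periodicity (A1), and the only item worth flagging explicitly is the $\mathbb{Z}$-equivariance that enables passage to $\hat{\mathcal{X}}$.
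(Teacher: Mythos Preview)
Your proof is correct and follows the same approach as the paper, which simply states that the lemma follows from the easily verified facts that $\hat{\iota}:\hat{\mathcal{E}}\to\mathbb{S}^1\times\mathbb{R}$ is a homeomorphism and that $\hat{\iota}\circ\hat{S}=f\circ\hat{\iota}$. You have spelled out precisely these two verifications (the $\mathbb{Z}$-equivariance via (A1) and the conjugacy via the equilibrium equation) and the routine pushforward consequence, which is exactly what the paper leaves implicit.
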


\begin{proof}
	This follows from easily verifiable facts that $\hat{\iota} : \hat{\mathcal{E}} \rightarrow \mathbb{S}^1 \times \mathbb{R}$ is a homeomorphism, and that $\hat{\iota} \circ \hat{S} = f \circ \hat{\iota}$.
\end{proof}

Assume now $\mu \in \hat{\mathcal{X}}$, and let $\mathcal{G}$ be a $\sigma$-subalgebra of Borel sets of $\hat{\mathcal{X}}$, satisfying the following:

\begin{itemize}
	\item[(M1)] {\bf The separation property.} There exists a Borel-measurable set $\mathcal{M}_1 \subset \hat{\mathcal{X}}$ such that $\mu(\mathcal{M}_1)=1$, and such that $\left\lbrace \mathcal{D} \cap \mathcal{M}_1, \: \mathcal{D} \in \mathcal{G} \right\rbrace$ generates all Borel-measurable sets on $\mathcal{M}_1$. Specifically, for each $q \in \mathcal{M}_1$, there exists $\mathcal{D}_q \in \mathcal{G}$, $q \in \mathcal{D}_q $ such that 
	if $q,\tilde{q} \in \mathcal{M}_1$, $q \neq \tilde{q}$, then
	$\mathcal{D}_q \cap \mathcal{D}_{\tilde{q}} = \emptyset$. Furthermore, for any $q \in \mathcal{M}_1$, $\mathcal{D}_{\hat{S}(q)}=\hat{S}(\mathcal{D}_q)$.
	
	\item[(M2)] {\bf Measurability.} If $\mathcal{M}_2 = \cup_{q \in \mathcal{M}_1}{\mathcal{D}_q}$, then the map $\hat{\theta} : \mathcal{M}_2 \rightarrow \mathcal{M}_1$ given with $\hat{\theta}(\mathcal{D}_q)=q$ is Borel-measurable. Specifically, $\mathcal{M}_2$ is Borel-measurable.
	
	\item[(M3)] {\bf The closed-sets property.} There exists a family $\mathcal{D}_i \in \mathcal{G}$ of closed sets, $i \in \mathcal{I}$, such that $\mathcal{G}$ is generated by this family Furthermore, for each $i_1 \in \mathcal{I}$ there exists a sequence $i_n \in \mathcal{I}$, $n \in \mathbb{N}$ such that $\mathcal{D}_{i_n}$ are pairwise disjoint, and such that $\mu(\cup_{n=1}^{\infty}\mathcal{D}_{i_n})=1$. 
	
	\item[(M4)] {\bf The $\xi$-invariance.} For each $q \in \mathcal{M}_1$ and each $\mathcal{D} \in \mathcal{G}$, if $q \in \mathcal{D}$, then for all $t \geq 0$, $\hat{\xi}^t(q) \in \mathcal{D}$.
\end{itemize}

In applications, (M1)-(M3) will typically trivially follow from the construction, with the core of the argument being in demonstrating (M4).

\begin{proposition} \label{p:construction}
		Assume $\mu \in \mathcal{M}(\hat{\mathcal{X}}_K, \hat{S})$ for some $K \in \mathbb{N}$. Let $\mathcal{G}$ be a $\sigma$-subalgebra of Borel sets on $\hat{\mathcal{X}}$ satisfying (M1)-(M4). Then there exists $\nu \in \mathcal{M}(\hat{\mathcal{E}}, \hat{S})$ which $\mathcal{G}$-shadows $\mu$.
		
		Furthermore, if $\mu$ is $\hat{S}$-ergodic, we can choose $\nu$ to be $\hat{S}$-ergodic.
\end{proposition}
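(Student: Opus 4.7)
The plan is to realize $\nu$ as a weak-$^*$ accumulation point of time averages of $\mu$ under the semiflow $\hat{\xi}$ pushed forward to measures. Since (\ref{r:flow}) is translation invariant, $\hat{\xi}^t$ commutes with $\hat{S}$, so every push-forward $(\hat{\xi}^t)_*\mu$ lies in $\mathcal{M}(\hat{\mathcal{X}}_K,\hat{S})$ (using that $\hat{\mathcal{X}}_K$ is $\hat{\xi}$-invariant), and I set
\[
\mu_T = \frac{1}{T}\int_0^T (\hat{\xi}^t)_*\mu \, dt \in \mathcal{M}(\hat{\mathcal{X}}_K,\hat{S}).
\]
The first key observation is that (M4) yields $\mathcal{M}_1 \cap \mathcal{D} \subset (\hat{\xi}^t)^{-1}\mathcal{D}$ for every $\mathcal{D} \in \mathcal{G}$ and $t \geq 0$; since $\mu(\mathcal{M}_1)=1$, this gives $\mu((\hat{\xi}^t)^{-1}\mathcal{D}) \geq \mu(\mathcal{D})$, and applying the same bound to $\mathcal{D}^c \in \mathcal{G}$ forces equality. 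Hence $\mu_T(\mathcal{D}) = \mu(\mathcal{D})$ for all $T > 0$ and every $\mathcal{D} \in \mathcal{G}$.

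The quotient $\hat{\mathcal{X}}_K$ is compact in the product topology (any representative with $x_0 \in [0,1)$ satisfies $|x_j|\leq 1 + K|j|$), so $\{\mu_T\}$ is weak-$^*$ precompact; extract a subsequence $\mu_{T_n} \to \nu$. A standard change-of-variable computation, comparing $\int_0^T$ with $\int_s^{T+s}$, yields $|\int f\,d(\hat{\xi}^s)_*\mu_T - \int f\,d\mu_T| = O(s\|f\|_\infty/T)$ for continuous $f$, so $\nu$ is invariant under $(\hat{\xi}^t)_*$ for all $t \geq 0$. The main obstacle will be to prove that any such $\hat{S}$-invariant, $(\hat{\xi}^t)_*$-invariant probability measure on $\hat{\mathcal{X}}_K$ is concentrated on $\hat{\mathcal{E}}$. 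For this I would use the average action density $A(\sigma) = \int V(x_0,x_1)\,d\sigma$ as a Lyapunov function: a short calculation using $\hat{S}$-invariance of $\sigma$ and (\ref{r:flow}) gives
\[
\frac{d}{dt} A\bigl((\hat{\xi}^t)_*\sigma\bigr) = -\int \bigl(V_2(x_{-1},x_0)+V_1(x_0,x_1)\bigr)^2 \,d(\hat{\xi}^t)_*\sigma,
\]
so $(\hat{\xi}^t)_*$-invariance of $\nu$ forces the integrand to vanish $\nu$-a.s., identifying $\nu$ as an element of $\mathcal{M}(\hat{\mathcal{E}},\hat{S})$; this is the dissipative mechanism developed in \cite{Slijepcevic:13,Slijepcevic:16}.

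It remains to verify shadowing, the factor property, and ergodicity. For each closed generator $\mathcal{D}_i$ from (M3), Portmanteau gives $\nu(\mathcal{D}_i) \geq \limsup_n \mu_{T_n}(\mathcal{D}_i) = \mu(\mathcal{D}_i)$; applied to a pairwise disjoint countable family with $\mu$-full union, $\sigma$-additivity forces equality on every member, and a monotone class argument extends it to all of $\mathcal{G}$. The factor property $\hat{\theta}_*\nu = \mu$ follows from (M1)--(M2): $\hat{\theta}$ is a measurable retraction sending the partition $\{\mathcal{D}_q\}_{q \in \mathcal{M}_1}$ to its labels, and the shadowing identity just established translates into $\nu(\hat{\theta}^{-1} A) = \mu(A)$ for Borel $A \subset \mathcal{M}_1$. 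Finally, if $\mu$ is $\hat{S}$-ergodic, Lemma \ref{l:muergodic} applied to the ergodic decomposition of $\nu$ (whose hypothesis $\hat{\theta}^{-1}(\mathcal{M}_1 \cap \mathcal{D}) \subset \mathcal{D}$ is immediate from the separation property) shows that $\chi$-almost every ergodic component of $\nu$ still $\mathcal{G}$-shadows $\mu$; selecting any such component gives the required ergodic $\nu$.
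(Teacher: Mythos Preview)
Your proof is correct and follows essentially the same strategy as the paper: the same Lyapunov function $\sigma\mapsto\int V(x_0,x_1)\,d\sigma$ is used to force the limiting measure onto $\hat{\mathcal{E}}$, and the shadowing verification via (M3), Portmanteau, and the factor map $\hat\theta$ is virtually identical. The only (inessential) difference is that the paper obtains $\nu$ directly as a point of the $\omega$-limit set of $\mu$ under $\hat{\xi}^*$ via the LaSalle principle, whereas you take a Krylov--Bogolyubov time average to produce a $\hat{\xi}^*$-fixed measure; both routes feed into the same Lyapunov computation.
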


The proof of Proposition \ref{p:construction} is analogous to the proof of \cite[Theorem 4.4]{Slijepcevic:16}. For convenience of the reader, we give it in the Appendix A.

\begin{remark} \label{r:N}
	Let $N \geq 1 $ be an integer. Then the statements of Lemma \ref{l:push} and Proposition \ref{p:construction} hold if we replace $\hat{S}$, $f$ with $\hat{S}^N$, $f^N$ everywhere, including the shift-invariance in (M1); the proofs are analogous. 
\end{remark}

\section{Invariant sets of the formally gradient semiflow} \label{s:invariant}

Let $\Omega=\lbrace 0,1 \rbrace^{\mathbb{Z}}$ equipped with the product topology. The main result of this section is a construction of a non-empty, closed, $\xi$-invariant subset of $\mathcal{X}_1$ associated to any $\boldsymbol{\omega}=(\omega_i)_{i \in \mathbb{Z}} \in \Omega$. 

We first define the notation, and then state the result. The 'nesting' two-step approach to proving $\xi$-invariance is specified in Lemma \ref{l:nesting}. The two steps are based on an order-preserving, and an energy argument.

Fix $\boldsymbol{\omega} \in \Omega$. Let $j_{k}, k \in \mathbb{Z}$ be the increasing sequence of positions of $1$ in $\boldsymbol{\omega}$ such that $j_0 \leq 0 < j_1$ (the construction can be adapted in a straightforward way to the case of finitely many or no ones in $\boldsymbol{\omega}$, thus not discussed separately). Let $N>0$ be an integer constant specified later (we omit dependency of $N$ in the notation introduced below for simplicity).

The idea is that we 'glue' $2N$-large pieces of the minimizing equilibrium $\boldsymbol{y}_0 \equiv y_0$ whenever $\omega_i = 0$, and of the minimizing homoclinic $\boldsymbol{z}$ whenever $\omega_i = 1$. Specifically, let $m \in \mathbb{Z}$, $-N+1 \leq j ... \leq N$, and $k$ such that $j_k \leq m < j_{k+1}$ (e.g. for positive $m$, $k$ is the number of 'ones' between 0 and $m$). Then we define
\begin{align*}
 x^{\boldsymbol{\omega}}_{2mN+j} = \begin{cases}
   y_0 + k & \omega_m = 0,  \\
   z_j + k & \omega_m = 1.
 \end{cases}
\end{align*}
Furthermore, we combine minimax homoclinics, to be useful shortly:
\begin{align*}
 x^{\boldsymbol{\omega},-}_i&=\tilde{z}_j+k & i&=2i_kN+j, \: k\in \mathbb{Z}, \: j=1,...,2(i_{k+1}-i_k)N, \\
 x^{\boldsymbol{\omega},+}_i&=\tilde{z}_{-j+1}+k  & i&=2i_kN-j, \: k\in \mathbb{Z}, \: j=0,...,2(i_k-i_{k-1})N-1.
\end{align*}
Clearly $ \boldsymbol{x}^{\omega,-} < \boldsymbol{x}^{\omega} < \boldsymbol{x}^{\omega,+}$. Let
$ \mathcal{A}_{\boldsymbol{\omega}}$ be the set of all $\boldsymbol{x} \in \mathcal{X}_1$ such that $ \boldsymbol{x}^{\omega,-} \leq \boldsymbol{x} \leq \boldsymbol{x}^{\omega,+}$.

Let $\boldsymbol{h} \mapsto \boldsymbol{h}^{\tau}$ be the truncation map $\mathcal{X} \rightarrow l^2(\mathbb{Z})$ given with
\begin{equation}
h^{\tau}_i = \begin{cases} h_i & -N+1 \leq i \leq N, \\
0 & \text{otherwise}.
\end{cases}
\end{equation}
Now by the definition of $\Delta_1$, we can choose $0\leq e_0 \leq \Delta_0/2$ such that for all $\boldsymbol{h} \in \mathcal{N}(e_0)$, we have
\begin{equation}
|| \nabla E(\boldsymbol{h}) ||^2_{l^2(\mathbb{Z})} \geq \Delta_1 / 2. \label{r:lowdelta1}
\end{equation}
We define $\mathcal{B}_{\boldsymbol{\omega}}$ to be the set of all $\boldsymbol{x} \in \mathcal{X}_1$ such that
$\boldsymbol{x}^{\tau,k}:=\left[ S^{2Nj_k} \boldsymbol{x}-k - \boldsymbol{z} \right]^{\tau}$ satisfies 
\begin{equation*}
 E\left( \boldsymbol{x}^{\tau,k} \right) \leq e_0,
\end{equation*}
and furthermore that $\boldsymbol{x}^{\tau,k}$ is in the connected component of $\boldsymbol{0}$ of the set $\lbrace E(\boldsymbol{h})\leq e_0, \: \boldsymbol{h} \in l^2(\mathbb{Z}) \rbrace$, for all $k \in \mathbb{Z}$. We have that $\boldsymbol{x}^{\boldsymbol{\omega}}\in \mathcal{B}_{\boldsymbol{\omega}}$ as by definition $(\boldsymbol{x}^{\boldsymbol{\omega}})^{\tau,k}=\boldsymbol{0}$ and $E(\boldsymbol{0})=0$.
\begin{proposition} \label{p:invariant}
Assume $N$ satisfies (\ref{r:Ncond}). Then for any $\boldsymbol{\omega} \in \Omega$, $\mathcal{A}_{\omega} \cap \mathcal{B}_{\omega}$ is a $\xi$-invariant, closed set, non-empty as $\boldsymbol{x}_{\boldsymbol{\omega}} \in \mathcal{A}_{\omega} \cap \mathcal{B}_{\omega}$. 
\end{proposition}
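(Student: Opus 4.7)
The plan is to verify three things in order: closedness, non-emptiness, and $\xi$-invariance, with the last handled by a two-step nesting scheme as in Lemma \ref{l:nesting}. Non-emptiness is immediate: the text already records $\boldsymbol{x}^{\omega,-}<\boldsymbol{x}^\omega<\boldsymbol{x}^{\omega,+}$ so $\boldsymbol{x}^\omega\in\mathcal{A}_\omega$, and by construction $(\boldsymbol{x}^\omega)^{\tau,k}=\boldsymbol{0}$, so $E=0\leq e_0$ and the component constraint is trivial. Closedness of $\mathcal{A}_\omega$ is immediate in the product topology of $\mathcal{X}_1$. For $\mathcal{B}_\omega$, the truncation $\boldsymbol{x}\mapsto\boldsymbol{x}^{\tau,k}$ is continuous from $\mathcal{X}_1$ to $l^2(\mathbb{Z})$ since only finitely many coordinates are nonzero; composing with the continuous functional $E$ (Lemma \ref{l:eprop}(i)) gives closedness of $\{E(\boldsymbol{x}^{\tau,k})\leq e_0\}$, and the connected-component-of-$\boldsymbol{0}$ condition is preserved under $l^2$-limits because a connected component of a closed set in a metric space is itself closed.

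For invariance, I would first establish that $\mathcal{A}_\omega$ is $\xi$-invariant by the order-preserving Lemma \ref{l:order}. Between any two consecutive glue indices $2Nj_k$ and $2Nj_{k+1}$, each of $\boldsymbol{x}^{\omega,\pm}$ coincides with a shift of the minimax homoclinic $\tilde{\boldsymbol{z}}\in\mathcal{E}$, which is an equilibrium of $\xi$. Supposing the two-sided enclosure fails for the first time at some time $t^\ast$ and position $i^\ast$: if $i^\ast$ is interior to a block, applying Lemma \ref{l:order} on the half-lines emanating from $i^\ast$ with $\tilde{\boldsymbol{x}}$ the local shift of $\tilde{\boldsymbol{z}}$ yields an immediate contradiction; if $i^\ast$ is a glue point, induction on $k$ propagates the ordering from block to block by alternately applying parts (i) and (ii) of Lemma \ref{l:order} between anchor glue points.

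For invariance of $\mathcal{A}_\omega\cap\mathcal{B}_\omega$, I would run an energy-energy dissipation-energy flux balance (Lemma \ref{l:balance}) for each $k$ separately. Differentiating $E(\boldsymbol{x}^{\tau,k}(t))$ along (\ref{r:flow}) and regrouping, one expects
\[
\frac{d}{dt}E(\boldsymbol{x}^{\tau,k}(t))=-\|\nabla E(\boldsymbol{x}^{\tau,k})\|^{2}_{l^2(\mathbb{Z})}+\mathcal{J}^k_{-N}(t)-\mathcal{J}^k_{N}(t),
\]
where $\mathcal{J}^k_{\pm N}$ are boundary fluxes arising from the mismatch between the gradient of the truncated functional (which uses $h_{-N}=h_{N+1}=0$) and the actual flow values at the truncation window boundary. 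Within $\mathcal{A}_\omega$, the enclosure by shifts of $\tilde{\boldsymbol{z}}$ combined with the hyperbolic decay (\ref{r:condhyp}) bounds the offending coordinates by $\kappa_1\lambda^{-N}$, giving $|\mathcal{J}^k_{\pm N}|\leq\kappa_1\kappa_2\lambda^{-N}\,\|\nabla E\|_{l^2(\mathbb{Z})}$ with $\kappa_2$ the constant from (\ref{r:balance2}). By the choice of $e_0$ in (\ref{r:lowdelta1}), on $\{E=e_0\}$ one has $\|\nabla E\|^2_{l^2(\mathbb{Z})}\geq\Delta_1/2$, and (\ref{r:Ncond}) yields $4\kappa_1\kappa_2\lambda^{-N}\leq\Delta_1$, so the dissipation dominates twice the boundary flux. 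Therefore $E(\boldsymbol{x}^{\tau,k}(t))$ cannot cross $e_0$ from below, and continuity of the flow preserves the connected-component constraint for each $k$.

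The main obstacle I expect is setting up the flux estimate cleanly: correctly identifying the terms $\mathcal{J}^k_{\pm N}$ from the chain-rule computation, and then verifying that the $\mathcal{A}_\omega$-enclosure really does pin the truncation error at indices near $\pm N$ to $\kappa_1\lambda^{-N}$ via (\ref{r:condhyp}), so that the constant $\kappa_2$ produced by the chain-rule boundary terms matches exactly the one entering (\ref{r:balance2}) and (\ref{r:Ncond}). A secondary subtlety is ensuring that the glue-point induction in the $\mathcal{A}_\omega$-invariance argument threads together consistently across infinitely many blocks, which should follow from the equilibrium property of $\tilde{\boldsymbol{z}}$ applied blockwise but needs to be justified uniformly in $k$.
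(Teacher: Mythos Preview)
Your plan mis-assigns the roles of $\mathcal{A}_\omega$ and $\mathcal{B}_\omega$ in the nesting scheme. You propose first to show $\mathcal{A}_\omega$ is $\xi$-invariant on its own via Lemma \ref{l:order}, and then to run the energy balance only to keep the orbit inside $\mathcal{B}_\omega$. But $\mathcal{A}_\omega$ is \emph{not} $\xi$-invariant by itself: the bounding configurations $\boldsymbol{x}^{\omega,\pm}$ are not global sub/supersolutions. On each block they coincide with a shift of $\tilde{\boldsymbol{z}}\in\mathcal{E}$, but at the glue index the shifted equilibrium changes. For instance, when $i_0=0$ one has $x_0^{\omega,+}=\tilde{z}_1$ (from block $k=0$) while $x_1^{\omega,+}=\tilde{z}_{2-2i_1N}+1$ (from block $k=1$); a direct check using $V_{12}<0$ and the equilibrium relation for $\tilde{\boldsymbol{z}}+1$ shows the right-hand side of (\ref{r:flow}) evaluated at $\boldsymbol{x}^{\omega,+}$ is strictly negative at $i=1$, so $\boldsymbol{x}^{\omega,+}$ fails to be a supersolution there. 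Lemma \ref{l:order} controls a ray only once its single boundary value is controlled, and your ``induction on $k$'' is circular: controlling the boundary at one glue point presupposes the ordering on the adjacent block, which needs its own boundary control, with no base case and no uniform time step.

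The paper breaks this circularity by reversing the roles. It proves (B1) that $\mathcal{B}_\omega$ is $\mathcal{A}_\omega$-relatively invariant (this is where the energy balance Lemma \ref{l:balance} is used), and (B2) that membership in $\mathcal{A}_\omega\cap\mathcal{B}_\omega$ implies persistence in $\mathcal{A}_\omega$ for a uniform short time $\lambda$. The point of (B2) is that the $\mathcal{B}_\omega$-constraint forces $x_0(t_0)\in E_0$ and $x_1(t_0)\in E_1$, intervals bounded away from $\tilde{z}_1$ by a fixed $\delta>0$; this buffer, together with the uniform bound on $|dx_k/dt|$ on $\mathcal{X}_1$, yields a uniform $\lambda>0$ during which $x_0(t)\leq\tilde{z}_1\leq x_1(t)$, and only \emph{then} does Lemma \ref{l:order} apply on each half-line anchored at the glue point. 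Without $\mathcal{B}_\omega$ an orbit in $\mathcal{A}_\omega$ may sit with $x_0$ or $x_1$ exactly at $\tilde{z}_1$, and there is no buffer. A secondary point: your flux bound carries an extra factor of $\|\nabla E\|$ not present in Lemma \ref{l:balance}; the paper obtains $|F(t)|\leq\kappa_1\kappa_2\lambda^{-N}$ outright, and comparing this directly with $\|\nabla E\|^2\geq\Delta_1/2$ is exactly what matches condition (\ref{r:Ncond}).
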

The proof is based on the following nesting trick from \cite{Slijepcevic:16} (for the convenience of the reader, we repeat a short proof of Lemma \ref{l:nesting} below). Assume for the moment that $\xi$ is an abstract continuous semiflow on a separable metric space $\mathcal{X}$, and let $\mathcal{A}$, $\mathcal{B}$ be subsets of $\mathcal{X}$. Assume they satisfy the following for every semiorbit $q(t), t\geq t_0$ of the semiflow $\xi$:

\begin{itemize}
	\item[(B1)] $\mathcal{B}$ is $\mathcal{A}$-relatively $\xi-$invariant set. It means that if $q(t_0) \in \mathcal{B}$, and if there exists $t_1 > t_0$ such that for all $t \in [t_0,t_1]$, $q(t)\in  \mathcal{A}$, then for all $t \in [t_0,t_1]$, $q(t)\in  \mathcal{B}$.
	
	\vspace{1ex}
	\item[(B2)] There exists $\lambda >0$ such that, if $q(t_0) \in \mathcal{A} \cap \mathcal{B}$, then for all $t \in [t_0,t_0+\lambda]$,  $q(t) \in \mathcal{A}$.
\end{itemize}

\begin{lemma} \label{l:nesting}
	Assume $\mathcal{A}$, $\mathcal{B}$ are subsets of a separable metric space $\mathcal{X}$ satisfying (B1), (B2) with respect to a continuous semiflow $\xi$ on $\mathcal{X}$. Then $\mathcal{A} \cap \mathcal{B}$ is $\xi$-invariant.
\end{lemma}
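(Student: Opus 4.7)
The plan is to prove invariance by a bootstrapping argument that alternates between (B2) and (B1) on successive intervals of length $\lambda$. The only ingredients are the two hypotheses and the semigroup property of $\xi$; separability of $\mathcal{X}$ plays no role at this level (it matters only for the applications).

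Fix $q_0 \in \mathcal{A} \cap \mathcal{B}$ and write $q(t)=\xi^t(q_0)$. I will show by induction on $n \geq 0$ that
\[
q(t) \in \mathcal{A} \cap \mathcal{B} \quad \text{for all } t \in [n\lambda, (n+1)\lambda].
\]
The base case $n=0$ is the key observation that captures the entire idea. Apply (B2) at $t_0=0$: since $q(0) = q_0 \in \mathcal{A}\cap \mathcal{B}$, we obtain $q(t) \in \mathcal{A}$ for $t \in [0,\lambda]$. Now apply (B1) with $t_0=0$ and $t_1=\lambda$: since $q(0) \in \mathcal{B}$ and $q(t) \in \mathcal{A}$ throughout $[0,\lambda]$, the hypothesis (B1) forces $q(t) \in \mathcal{B}$ for all $t \in [0,\lambda]$. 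In particular $q(\lambda) \in \mathcal{A} \cap \mathcal{B}$.

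For the inductive step, assume $q(n\lambda) \in \mathcal{A}\cap \mathcal{B}$. Running the exact same argument starting from time $n\lambda$ — i.e. applying (B2) at $t_0=n\lambda$ to pull the orbit into $\mathcal{A}$ on $[n\lambda,(n+1)\lambda]$, then applying (B1) on the same interval to pull it into $\mathcal{B}$ — yields $q(t) \in \mathcal{A}\cap \mathcal{B}$ on $[n\lambda,(n+1)\lambda]$ and in particular at the right endpoint, closing the induction. Since $[0,\infty) = \bigcup_{n \geq 0}[n\lambda,(n+1)\lambda]$, this establishes $\xi$-invariance of $\mathcal{A} \cap \mathcal{B}$.

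There is no real obstacle; the content is entirely in the interlocking form of (B1) and (B2). The subtle point to be aware of is that (B1) is a conditional statement requiring one to have already shown membership in $\mathcal{A}$ on the whole interval before one can conclude membership in $\mathcal{B}$; this is exactly why (B2) is needed to provide a quantitative positive time $\lambda$ during which the $\mathcal{A}$-side is guaranteed. Without (B2), one could not bootstrap a finite-length interval of joint $\mathcal{A} \cap \mathcal{B}$-membership out of the starting point, and the induction would fail at $n=0$.
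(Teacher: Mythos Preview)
Your proof is correct and uses essentially the same mechanism as the paper: apply (B2) to secure $\mathcal{A}$-membership on an interval of length $\lambda$, then invoke (B1) to upgrade to $\mathcal{B}$-membership on that interval, and iterate. The paper packages this as a contradiction argument via the supremum of times for which the orbit stays in $\mathcal{A}\cap\mathcal{B}$, whereas you run a direct induction on the intervals $[n\lambda,(n+1)\lambda]$; the content is identical.
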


\begin{proof}
	Assume the contrary and find a semi-orbit $q(t)$ of $\xi$, $t \geq t_0$, $q(t_0) \in \mathcal{A}\cap \mathcal{B}$ which violates the conclusion of the Lemma. Let 
	$$ t_2 = \sup \left\lbrace t_1\geq 0, \: q(t)\in \mathcal{A}\cap \mathcal{B}
	\text{ for all } t\in [0,t_1] \right\rbrace .$$
	Then if $t_3 = \max \lbrace t_0, t_2-\lambda/2 \rbrace$, by construction $q(t_3) \in \mathcal{A}\cap \mathcal{B}$. Now by (B2), for all $t \in [t_3,t_3 + \lambda]$, we have $q(t) \in \mathcal{A}$, and by (B1), for all $t \in [t_3,t_3 + \lambda]$ we obtain $q(t) \in \mathcal{B}$. But $t_3 + \lambda > t_2$, which is a contradiction. 
\end{proof} 
 
To complete the proof of Proposition \ref{p:invariant}, it suffices to show that $\mathcal{A}_{\boldsymbol{\omega}}$ and $\mathcal{B}_{\boldsymbol{\omega}}$ satisfy (B1), (B2).

As noted in the introduction, in the first step we use an energy argument adapted from applications to PDEs, specifically the energy-energy dissipation-energy flux balance law, where in our case energy is the truncated action.

\begin{lemma} \label{l:balance}  {\bf The action flux balance law.} Assume $k=0$, $j_0=0$, and let $\boldsymbol{h}(t)=\boldsymbol{x}(t)^{\tau,0}=[\boldsymbol{x}(t)-\boldsymbol{z}]^{\tau}$. Then
\begin{equation}
\frac{dE(\boldsymbol{h}(t))}{dt} = F(t) - || \nabla E(\boldsymbol{h}(t)) ||^2_{l^2(\mathbb{Z})}, \label{r:balance}
\end{equation}
where $F(t)$ satisfies
\begin{align}
|F(t)| & \leq \kappa_2  \max \lbrace |x_i(t)-z_i|, \: i\in \lbrace -N,-N+1,N,N+1 \rbrace \rbrace, \label{r:balance1} \\
\kappa_2 & = 8 \max \lbrace |V_1(x,y)|,|V_2(x,y)|, \: |x-y| \leq 2  \rbrace \cdot \max \lbrace |V_{12}(x,y)|,|x-y| \leq 2 \rbrace. \label{r:balance2}
\end{align}
\end{lemma}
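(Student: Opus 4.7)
I would prove the balance law by directly differentiating $E(\boldsymbol{h}(t))$ along the semiflow and isolating, as $F(t)$, the boundary defects caused by the truncation. The chain rule together with (\ref{d:nablae}) yields $\frac{d}{dt}E(\boldsymbol{h}(t)) = \sum_{j \in \mathbb{Z}} \nabla E(\boldsymbol{h})_j \, \dot{h}_j(t)$, and since $h_j \equiv 0$ (hence $\dot{h}_j \equiv 0$) for $j \notin [-N+1, N]$, the sum collapses to $-N+1 \leq j \leq N$. The plan is to show that in the interior ($-N+2 \leq j \leq N-1$) we have exactly $\dot{h}_j = -\nabla E(\boldsymbol{h})_j$ (so those terms contribute $-(\nabla E(\boldsymbol{h})_j)^2$), while at the two endpoints of the window and at the two ``leakage'' indices $j \in \{-N,N+1\}$ (where $\nabla E(\boldsymbol{h})_j$ is nonzero yet $\dot{h}_j = 0$) the mismatch yields exactly $F(t)$.

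\textbf{Key computations.} In the interior, $h_{j\pm 1}$ coincides with $x_{j\pm 1} - z_{j\pm 1}$, hence $\nabla E(\boldsymbol{h})_j = V_2(x_{j-1},x_j)+V_1(x_j,x_{j+1}) = -\dot{x}_j = -\dot{h}_j$ by (\ref{r:flow}), producing the bulk of $-\|\nabla E(\boldsymbol{h})\|^2_{l^2}$. At $j = -N+1$, the flow uses $x_{-N}$ while $\nabla E(\boldsymbol{h})_{-N+1}$ uses $z_{-N}$ (since $h_{-N}=0$), so a mean-value computation gives
\begin{equation*}
\dot{h}_{-N+1} + \nabla E(\boldsymbol{h})_{-N+1} = V_2(z_{-N}, x_{-N+1}) - V_2(x_{-N}, x_{-N+1}) =: \epsilon_{-N+1},
\end{equation*}
bounded by $\max|V_{12}|\cdot|x_{-N}-z_{-N}|$; the index $j=N$ is symmetric with $|\epsilon_N| \leq \max|V_{12}|\cdot|x_{N+1}-z_{N+1}|$. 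At $j=-N$, using $\boldsymbol{z}\in\mathcal{E}$ to replace $V_2(z_{-N-1},z_{-N})$ by $-V_1(z_{-N},z_{-N+1})$ one rewrites $\nabla E(\boldsymbol{h})_{-N} = V_1(z_{-N}, z_{-N+1}+h_{-N+1}) - V_1(z_{-N}, z_{-N+1})$, bounded both by $\max|V_{12}|\cdot|h_{-N+1}|$ (MVT) and by $2\max|V_1|$ (triangle inequality); similarly for $j=N+1$. Assembling the contributions,
\begin{equation*}
\frac{dE(\boldsymbol{h}(t))}{dt} = -\|\nabla E(\boldsymbol{h})\|^2_{l^2(\mathbb{Z})} + (\nabla E(\boldsymbol{h})_{-N})^2 + (\nabla E(\boldsymbol{h})_{N+1})^2 + \nabla E(\boldsymbol{h})_{-N+1}\,\epsilon_{-N+1} + \nabla E(\boldsymbol{h})_N \,\epsilon_N,
\end{equation*}
and I would identify $F(t)$ with the last four terms.

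\textbf{Bounding $F(t)$ and the main obstacle.} With $M = \max_{i\in\{-N,-N+1,N,N+1\}}|x_i(t)-z_i|$, I treat each of the four boundary contributions as a product bounded on one factor by $2\max|V_1|$ (triangle inequality) and on the other by $\max|V_{12}|\cdot|x_i - z_i|$ (MVT), so every term is at most $2\max|V_1|\max|V_{12}|\cdot|x_i - z_i|$, yielding $|F(t)| \leq 8\max|V_1|\max|V_{12}|\cdot M = \kappa_2 M$. The main subtlety, and the step I expect to require the most care, is that $(\nabla E(\boldsymbol{h})_{-N})^2$ and $(\nabla E(\boldsymbol{h})_{N+1})^2$ look naively quadratic in $h$; the trick of bounding one factor uniformly by $2\max|V_1|$ and extracting the linear smallness only from the other factor is essential for the \emph{linear} dependence on $M$ that the estimate (\ref{r:balance1}) demands. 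A minor bookkeeping point is that for $|V_1|,|V_2|$ at arguments such as $(z_{-N},x_{-N+1})$ to fall under $\max|V_1|$ on $|x-y|\leq 2$, one uses $\boldsymbol{x},\boldsymbol{z}\in\mathcal{X}_1$ together with $|x_i-z_i|\leq 1$ at the four boundary indices, which also explains the choice of radius $2$ in the definition (\ref{r:balance2}) of $\kappa_2$.
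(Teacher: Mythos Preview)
Your approach is correct and essentially identical to the paper's proof in Appendix~B. The paper expands $\|\nabla E(\boldsymbol{h})\|^2$ and $dE/dt$ into explicit sums, labels the boundary summands $F_1,\dots,F_8$, and then groups them as $F_1$, $F_2+F_5+F_6$, $F_4$, $F_3+F_7+F_8$; your four terms $(\nabla E(\boldsymbol{h})_{-N})^2$, $\nabla E(\boldsymbol{h})_{-N+1}\epsilon_{-N+1}$, $(\nabla E(\boldsymbol{h})_{N+1})^2$, $\nabla E(\boldsymbol{h})_N\epsilon_N$ are exactly these four groups, and the ``one factor by $2\max|V_i|$, the other by MVT'' trick is precisely how the paper obtains each $\frac{\kappa_2}{4}|x_i-z_i|$ bound. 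Only a cosmetic point: where you write $2\max|V_1|$ you should write $2\max\{|V_1|,|V_2|\}$ to match the definition (\ref{r:balance2}) of $\kappa_2$, since e.g.\ $|\nabla E(\boldsymbol{h})_{-N+1}|\le |V_2|+|V_1|$.
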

The proof is a straightforward calculation, given in Appendix B. We will now show that the flux term $F(t)$ is by (\ref{r:condhyp}) and (\ref{r:balance1}) exponentially small $\sim \lambda^{-N}$ whenever $x(t)\in \mathcal{A}_{\boldsymbol{\omega}}$ (in the case $j_0=0$). We use this to bound $|F(t)|$ from above, and the definition of $\mathcal{B}_{\boldsymbol{\omega}}$ to bound the dissipation term $|| \nabla E(\boldsymbol{h}(t_2)) ||^2_{l^2(\mathbb{Z})}$ in (\ref{r:balance}) from below, and to establish the following:

\begin{lemma}  Assume (\ref{r:Ncond}) holds. Then the sets $\mathcal{A}_{\boldsymbol{\omega}}$ and $\mathcal{B}_{\boldsymbol{\omega}}$ satisfy (B1).
\end{lemma}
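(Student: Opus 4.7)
The plan is to verify (B1) by contradiction: suppose a semi-orbit $\boldsymbol{x}(t)$ satisfies $\boldsymbol{x}(t_0)\in\mathcal{B}_{\boldsymbol{\omega}}$ and $\boldsymbol{x}(t)\in\mathcal{A}_{\boldsymbol{\omega}}$ on $[t_0,t_1]$, but $\boldsymbol{x}(t)\notin\mathcal{B}_{\boldsymbol{\omega}}$ for some $t\in(t_0,t_1]$. I will derive a contradiction by applying the action flux balance law (Lemma~\ref{l:balance}) block-by-block at the first such failure time $t^*$, bounding the flux using the hyperbolic decay (\ref{r:condhyp}) and controlling the dissipation via the choice (\ref{r:lowdelta1}) of $e_0$.

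Since $t\mapsto \boldsymbol{h}^k(t)$ is continuous in $l^2(\mathbb{Z})$ and $\mathcal{C}(e_0)$ is a connected component of the closed set $\{E\leq e_0\}$, one cannot leave $\mathcal{C}(e_0)$ along such a path without first crossing the level $\{E=e_0\}$. Combined with closedness of $\mathcal{B}_{\boldsymbol{\omega}}$, this forces $\boldsymbol{x}(t^*)\in\mathcal{B}_{\boldsymbol{\omega}}$ with $E(\boldsymbol{h}^{k^*}(t^*))=e_0$ for some $k^*\in\mathbb{Z}$, while $E(\boldsymbol{h}^{k^*}(t))\leq e_0$ on $[t_0,t^*]$; hence the left-derivative of $t\mapsto E(\boldsymbol{h}^{k^*}(t))$ at $t^*$ is $\geq 0$.

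The technical heart of the argument consists of two uniform smallness estimates at $t^*$ flowing from (\ref{r:condhyp}) and the explicit form of $\boldsymbol{x}^{\boldsymbol{\omega},\pm}$. \emph{Boundary smallness}: at positions $i=2Nj_{k^*}+j$ with $j\in\{-N,-N+1,N,N+1\}$, the envelope $[\boldsymbol{x}^{\boldsymbol{\omega},-}_i,\boldsymbol{x}^{\boldsymbol{\omega},+}_i]$ has both endpoints within $\kappa_1\lambda^{-N}$ of the same integer translate of $y_0$, as is $z_j+k^*$; so $|x_i(t^*)-k^*-z_j|\leq 2\kappa_1\lambda^{-N}$, and (\ref{r:balance1})--(\ref{r:balance2}) together with (\ref{r:Ncond}) give $|F(t^*)|\leq 2\kappa_1\kappa_2\lambda^{-N}\leq \Delta_1/2$. \emph{Interior smallness}: at each $j\in\{-N+1,\dots,N\}$ the envelope has width at most $\kappa_1(\lambda^{-(j+1)}+\lambda^{-(2N-j)})\leq u_j=2\kappa_1\lambda^{-|j|}$ because $\lambda>1$. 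Thus $\boldsymbol{h}^{k^*}(t^*)\in\tilde{\mathcal{N}}(e_0)\cap\mathcal{C}(e_0)=\mathcal{N}(e_0)$, and by (\ref{r:lowdelta1}), $\|\nabla E(\boldsymbol{h}^{k^*}(t^*))\|^2_{l^2(\mathbb{Z})}\geq \Delta_1/2$.

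Plugging these into Lemma~\ref{l:balance} yields $(dE(\boldsymbol{h}^{k^*})/dt)|_{t^*}=F(t^*)-\|\nabla E(\boldsymbol{h}^{k^*}(t^*))\|^2\leq 0$, and this is in fact strict because the boundary bound carries a factor $(1+\lambda^{-1})<2$ upon retaining both $\lambda^{-N}$ and $\lambda^{-(N+1)}$ terms from (\ref{r:condhyp}). This contradicts the left-derivative being $\geq 0$, establishing (B1). The main obstacle I anticipate is the interior smallness bound at intermediate $j$: the $\mathcal{A}_{\boldsymbol{\omega}}$-envelope there is macroscopic, and the pointwise comparison with $u_j$ depends delicately on the decay estimates of $\tilde{\boldsymbol{z}}$ from both ends of the block, possibly forcing $\kappa_1$ to be taken larger than the minimal constant making (\ref{r:condhyp}) hold.
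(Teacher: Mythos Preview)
Your proposal is correct and follows essentially the same route as the paper's proof: take the first failure time, use the $\mathcal{A}_{\boldsymbol{\omega}}$-envelope together with the hyperbolic decay (\ref{r:condhyp}) to place $\boldsymbol{h}^{k^*}(t^*)$ in $\mathcal{N}(e_0)$ and to bound the flux $F$, then invoke (\ref{r:lowdelta1}) and the balance law (\ref{r:balance}) to force $dE/dt<0$ at $t^*$, contradicting minimality. The only cosmetic difference is in bookkeeping of constants: the paper records $|F|\le \kappa_1\kappa_2\lambda^{-N}$, which with (\ref{r:Ncond}) gives the strict inequality $dE/dt\le \Delta_1/4-\Delta_1/2<0$ directly, whereas your looser bound $|F|\le 2\kappa_1\kappa_2\lambda^{-N}$ lands exactly at $\Delta_1/2$ and needs your $(1+\lambda^{-1})$ refinement to get strictness; either way the argument closes, and your anticipated ``obstacle'' about $u_j$ at intermediate $j$ is not one --- (\ref{r:condhyp}) at $j=0$ already forces $2\kappa_1\ge\lambda>1$, which dominates the envelope width there.
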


\begin{proof}
Let $k=0$, assume $j_0=0$, and set $\boldsymbol{h}(t)=\boldsymbol{x}(t)^{\tau,0}=[\boldsymbol{x}(t)-\boldsymbol{z}]^{\tau}$.
 Assume $\boldsymbol{x}(t) \in \mathcal{A}_{\boldsymbol{\omega}}$ for all $t \in [t_0,t_1]$, $t_1 > t_0$, and that
\begin{equation}
 \boldsymbol{h}(t) \text{ is in the connected component of } \boldsymbol{0} \text{ of the set } \lbrace E(\boldsymbol{g})\leq e_0, \: \boldsymbol{g} \in l^2(\mathbb{Z}) \rbrace \label{r:property}
\end{equation} 
for $t=t_0$. We now show  (\ref{r:property}) holds for all $t \in [t_1,t_2]$.  
 Assume the contrary, i.e. that (\ref{r:property}) fails for some $t \in [t_0,t_1]$. Let $t_2$ be the infimum of such $t$, and then $t_0 < t_2 < t_1$ and by assumption $E(\boldsymbol{h}(t_2))=e_0$. 
 
 We first show that 
 \begin{equation} 
 \boldsymbol{h}(t_2) \in \mathcal{N}(e_0). \label{r:Nclaim}
\end{equation}
 It suffices to show that for all $k \in \mathbb{Z}$, $|h_i(t_2)| \leq u_i = 2\kappa_1 \lambda^{-|i|}$. We use that $\boldsymbol{x}(t_2) \in \mathcal{A}_{\boldsymbol{\omega}}$, thus 
 $$
 \boldsymbol{x}^{\boldsymbol{\omega},-} \leq \boldsymbol{x}(t_2) \leq \boldsymbol{x}^{\boldsymbol{\omega},+},
 $$
 and by definition for any $-N+1 \leq i \leq N$,
 $$
 x^{\boldsymbol{\omega},-}_i \leq z_i \leq x^{\boldsymbol{\omega},+}_j,
 $$
 thus for all $-N+1 \leq i \leq N$,
 \begin{equation}
 |h_i(t_2)| \leq x^{\boldsymbol{\omega},+}_i - x^{\boldsymbol{\omega},-}_i. \label{r:esth}
 \end{equation}
 Consider $-N+1 \leq i \leq 0$. Then inserting $i=-2i_{-1}N+j$ in the definition of $x^{\boldsymbol{\omega},-}$ and $i=2i_0N-j=-j$ in the definition of $x^{\boldsymbol{\omega},+}$, and using the fact that $j \mapsto \tilde{z}_j$ is increasing and $i_{i_{-1}}\leq -1$, we get
 \begin{align*}
 x_i^{\boldsymbol{\omega},-} & \geq \tilde{z}_{-2i_{-1}N+i} -1 \geq \tilde{z}_{2N+i}-1 \geq \tilde{z}_{-i} -1 \geq \tilde{z}_{|i|-1}-1, \\
 x_i^{\boldsymbol{\omega},+} & \leq \tilde{z}_{i+1}=\tilde{z}_{-(|i|-1)},
 \end{align*}
which combined with (\ref{r:condhyp}) yields $x^{\boldsymbol{\omega},+}_i - x^{\boldsymbol{\omega},-}_i \leq 2 \kappa \lambda^{-|i|}$, as required. Analogously we show the same for $1 \leq i \leq N$.
As by definition, $h_i(t_2)=0$ for any $i$ outside $-N+1,...,N$, we have shown (\ref{r:Nclaim}).

Now we see that by (\ref{r:lowdelta1}),
 \begin{equation}
 || \nabla E(\boldsymbol{h}(t_2)) ||^2_{l^2(\mathbb{Z})} \geq \Delta_1 / 2. \label{r:lowerdiss}
\end{equation}
Now we bound the flux $|F(t_2)|$ from above. Analogously as above, we deduce that for $i \in \lbrace -N,-N+1,N,N+1 \rbrace$, we have that
$
|x_i(t)-z_i| \leq \kappa_1 \lambda^{-N},
$
thus by (\ref{r:balance1}),
\begin{equation}
 |F(t_2)| \leq \kappa_1 \kappa_2 \lambda^{-N}. \label{r:upperflux}
\end{equation}
Inserting that in (\ref{r:balance}), we get that
$$
\frac{dE(\boldsymbol{h}(t_2))}{dt}  \leq \kappa_1 \kappa_2 \lambda^{-N} - \frac{\Delta_1}{2},
$$
which is $<0$ whenever (\ref{r:Ncond}) holds, thus (\ref{r:property}) holds on some interval $[t_2,t_2+\delta]$, $\delta >0$, which is a contradiction. Thus (\ref{r:property}) holds for all $t \in [t_1,t_2]$, as claimed. We prove an analogous claim for any combination of $k,j_k$ (e.g. by reducing it to the case $k=0$, $j_0=0$ by a translation), which yields (B1).
\end{proof}

\begin{lemma} \label{l:two} Assume (\ref{r:Ncond}) holds. Then the sets $\mathcal{A}_{\boldsymbol{\omega}}$ and $\mathcal{B}_{\boldsymbol{\omega}}$ satisfy (B2).
\end{lemma}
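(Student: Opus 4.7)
The plan is to derive (B2) from the order-preserving property of $\xi$ (Lemma~\ref{l:order}), applied blockwise using true orbits in $\mathcal{E}$ that extend the pieces forming $\boldsymbol{x}^{\boldsymbol{\omega},\pm}$. For each $k\in\mathbb{Z}$, I define $\tilde{\boldsymbol{z}}^{(k)}\in\mathcal{E}$ by $\tilde{z}^{(k)}_i := \tilde{z}_{i-2j_kN+1}+k$ and $\underline{\boldsymbol{z}}^{(k)}\in\mathcal{E}$ by $\underline{z}^{(k)}_i := \tilde{z}_{i-2j_kN}+k$; these are shifts of the minimax homoclinic (and lie in $\mathcal{E}$ by translation invariance), and coincide respectively with $\boldsymbol{x}^{\boldsymbol{\omega},+}$ on its $k$-th block $[2j_{k-1}N+1,2j_kN]$ and with $\boldsymbol{x}^{\boldsymbol{\omega},-}$ on its $k$-th block $[2j_kN+1,2j_{k+1}N]$. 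Using $\tilde{z}_j\in(y_0,y_0+1)$ and the block structure one verifies the global inequalities $\boldsymbol{x}^{\boldsymbol{\omega},+}\leq\tilde{\boldsymbol{z}}^{(k)}$ on $(-\infty,2j_kN]$ and $\boldsymbol{x}^{\boldsymbol{\omega},-}\geq\underline{\boldsymbol{z}}^{(k)}$ on $[2j_kN+1,+\infty)$ (with equality exactly on the corresponding block, strict elsewhere).

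For any $\boldsymbol{x}(t_0)\in\mathcal{A}_{\boldsymbol{\omega}}\cap\mathcal{B}_{\boldsymbol{\omega}}$ I would apply Lemma~\ref{l:order}(i) with $\tilde{\boldsymbol{x}}=\tilde{\boldsymbol{z}}^{(k)}$ anchored so that its ``index $0$'' sits at $i_0=2j_kN$: provided the boundary condition $x_{2j_kN}(t)\leq\tilde{z}^{(k)}_{2j_kN}=x^{\boldsymbol{\omega},+}_{2j_kN}$ holds throughout $[t_0,t_0+\lambda]$, the lemma yields $x_i(t)\leq\tilde{z}^{(k)}_i$ for $i\leq 2j_kN$ and in particular $x_i(t)\leq x^{\boldsymbol{\omega},+}_i$ on the $k$-th block of $\boldsymbol{x}^{\boldsymbol{\omega},+}$. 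A symmetric application of Lemma~\ref{l:order}(ii) with $\underline{\boldsymbol{z}}^{(k)}$ anchored at $2j_kN+1$ yields $x_i(t)\geq x^{\boldsymbol{\omega},-}_i$ on the $k$-th block of $\boldsymbol{x}^{\boldsymbol{\omega},-}$. Ranging over all $k$ reduces the proof of (B2) to verifying, uniformly in $t\in[t_0,t_0+\lambda]$, in $k$, and in $\boldsymbol{x}(t_0)\in\mathcal{A}_{\boldsymbol{\omega}}\cap\mathcal{B}_{\boldsymbol{\omega}}$, the boundary conditions at the block-boundary indices $2j_kN$ and $2j_kN+1$.

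The main obstacle is precisely this uniform verification, since at $t_0$ one only has $x_{2j_kN}(t_0)\leq x^{\boldsymbol{\omega},+}_{2j_kN}$ with possible equality, and continuity of $\xi$ in $t$ alone cannot prevent an instantaneous violation. The hypothesis $\boldsymbol{x}(t_0)\in\mathcal{B}_{\boldsymbol{\omega}}$ supplies the missing strict margin. Indeed, for $\boldsymbol{x}(t_0)$ to have $x_{2j_kN}(t_0)$ equal (or close) to $x^{\boldsymbol{\omega},+}_{2j_kN}=\tilde{z}_1+k$, the central coordinate $h^{\tau,k}_0 = x_{2j_kN}(t_0)-z_0-k$ of the truncated displacement would have to be (close to) the fixed positive value $\tilde{z}_1-z_0$; together with the $\mathcal{A}_{\boldsymbol{\omega}}$-imposed trapping $|h^{\tau,k}_j|\leq 2\kappa_1\lambda^{-|j|}$ at the outer indices, this would place $\boldsymbol{h}^{\tau,k}$ either outside $\{E\leq e_0\}$ or else in a connected component of that sublevel set separated from $\boldsymbol{0}$ by the Peierls saddle at value $\Delta_0>2e_0$, violating membership in $\mathcal{C}(e_0)$. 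Compactness of $\mathcal{N}(e_0)$ in $l^2(\mathbb{Z})$, continuity of the coordinate evaluation $\boldsymbol{h}\mapsto h_0$, and shift-invariance of the construction then yield a uniform gap $\epsilon_0>0$ with $x^{\boldsymbol{\omega},+}_{2j_kN}-x_{2j_kN}(t_0)\geq\epsilon_0$ and the symmetric lower gap, valid for every $\boldsymbol{x}(t_0)\in\mathcal{A}_{\boldsymbol{\omega}}\cap\mathcal{B}_{\boldsymbol{\omega}}$ and every $k$. Joint continuity of $\xi$ on finite windows then provides a uniform $\lambda>0$ guaranteeing $|x_{2j_kN}(t)-x_{2j_kN}(t_0)|<\epsilon_0/2$ on $[t_0,t_0+\lambda]$, which verifies the boundary conditions above and establishes (B2).
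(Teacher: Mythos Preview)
Your approach is essentially the same as the paper's: both reduce (B2) to the order-preserving Lemma~\ref{l:order} applied to shifted copies of the minimax homoclinic, with the crux being a uniform strict gap between $x_{2j_kN}(t_0)$ and $\tilde z_1+k$ (and the symmetric gap at index $2j_kN+1$), extracted from membership in $\mathcal{B}_{\boldsymbol{\omega}}$ and then propagated for a short time via the uniform bound on $|\dot x_j|$.

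Two points where your write-up is looser than the paper's and would benefit from tightening. First, your compactness step invokes $\mathcal{N}(e_0)$, but $\boldsymbol{h}^{\tau,k}(t_0)$ need not satisfy $E=e_0$; the relevant compact set is the full $\{|h_j|\le u_j\}\cap\mathcal{C}(e_0)$ (still compact in $l^2$ as a closed subset of the Hilbert cube $\{|h_j|\le u_j\}$). Second, and more substantively, your ``Peierls saddle separates the connected components in $l^2$'' claim is asserted rather than proved. The paper makes this step precise by projecting to the $0$-th coordinate via the one-dimensional function $S$: since $E(\boldsymbol{h})\ge S(z_0+h_0)-S(z_0)$ for $\boldsymbol{h}\in l^2$, any continuous path in $\{E\le e_0\}$ starting at $\boldsymbol{0}$ projects to a path in $\{S\le S(z_0)+e_0\}$ starting at $z_0$, hence stays in the connected component $E_0=[a_0,b_0]\subset(\tilde z_0,\tilde z_1)$; the gap is then the explicit $\delta=\min\{\tilde z_1-b_0,\,a_1-\tilde z_1\}>0$. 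Supplying this projection argument (or an equivalent) would close the one real gap in your proposal.
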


\begin{proof}  We first prove the following claim (we will later see that it suffices): if $\boldsymbol{x}(t_0) \in \mathcal{B}_{\boldsymbol{\omega}}$ in the case $i_0=0$, and if
\begin{equation} \begin{aligned}
x_{-j}(t) & \leq \tilde{z}_{-j+1},& j &\geq 0,  \\
 x_j(t) & \geq \tilde{z}_j, & j& \geq 1, 
\end{aligned} \label{r:whatever} \end{equation}
for $t=t_0$, then there is an absolute $\lambda > 0$ such that (\ref{r:whatever}) holds for $t \in [t_0,t_0+ \lambda]$. 

Let $E_0,E_1 \subset (y_0,y_0+1)$ be the connected components of $z_0$, $z_1$ respectively, of the set $\lbrace S(x) \leq e_0 \rbrace$. By continuity of $S$, $E_0 = [a_0,b_0]$, $E_1 = [a_1,b_1]$, and by the definition of $e_0$, $\hat{\boldsymbol{z}}$, $\boldsymbol{z}$ and $S$, we have that
\begin{equation}
\tilde{z}_0 < a_0 \leq b_0 < \tilde{z}_1 \label{r:E0}
\end{equation}
 and $\tilde{z}_1 < a_1 \leq b_1$. Let $\delta = \min \lbrace \tilde{z}_1-b_0,a_1-\tilde{z}_1 \rbrace$, $\delta > 0$. It is easy to check from (\ref{r:flow}) that $|d x_k(t)/dt|$ is bounded uniformly in $k\in \mathbb{Z}$, $t$ on $\mathcal{X}_1$, thus there exists $\lambda > 0$ such that 
\begin{equation}
\begin{aligned}
&\text{If }x_0(t_0) \in E_0, &\text{ then for all }t \in [t_0,t_0+\lambda], \:\: &x_0(t)\leq \tilde{z}_1, \\
&\text{If }x_1(t_0) \in E_1, &\text{ then for all }t \in [t_0,t_0+\lambda], \: \: &x_1(t)\geq \tilde{z}_1. 
\end{aligned} \label{r:claim} \end{equation}
Finally, it is easy to check that if $\boldsymbol{x}(t_0) \in \mathcal{A}_{\boldsymbol{\omega}} \cap \mathcal{B}_{\boldsymbol{\omega}}$ in the case $i_0$, then $[\boldsymbol{x}(t_0)]^{\tau,0} \in \mathcal{N}(e_0)$, thus by the definition of $S$ we have that $x_0(t_0) \in E_0$ and $x_1(t_0) \in E_0$. From this, (\ref{r:claim}) and the order-preserving property Lemma \ref{l:order} of the semiflow for rays we deduce that (\ref{r:whatever}) holds for $t \in [t_0,t_0+\lambda]$. 

Now, it is easy to check that $\boldsymbol{x}(t_0) \in \mathcal{A}_{\boldsymbol{\omega}}$ in the case $i_0=0$ implies (\ref{r:whatever}) for $t=t_0$. We show an analogous claim for an arbitrary $k,j_k$ by substituting $\boldsymbol{x}$ with $(S^{2Nj_k}{\boldsymbol{x}}-k)$ in (\ref{r:whatever}). Finally, if (\ref{r:whatever}) holds for $t \in [t_0,t_0 + \lambda]$ for all $(S^{2Nj_k}{\boldsymbol{x}}-k)$, $k\in \mathbb{Z}$, by definition $\boldsymbol{x}(t) \in \mathcal{A}_{\boldsymbol{\omega}}$ for $t \in [t_0,t_0 + \lambda]$.
\end{proof}

\begin{remark} \label{rem:E0} Assume $\boldsymbol{x} \in \mathcal{A}_{\boldsymbol{\omega}} \cap \mathcal{B}_{\boldsymbol{\omega}}$. We have established in the proof of Lemma \ref{l:two} that if  $\omega_0 = 1$, then $x_0 \in E_0=[a_0,b_0]$, where $a_0,b_0$ satisfy (\ref{r:E0}). It is easy to check from the definition of $\mathcal{A}_{\boldsymbol{\omega}}$ that if $\omega_0 =0$, then $\hat{z}_1-1 \leq x_0 \leq \hat{z}_0$. We will need that later.
\end{remark}

\section{Proofs of the main results}

Consider the dynamical system Bernoulli shift $(\Omega,\mathcal{B}(\Omega),\sigma,\mu_{\Omega})$, where $\Omega=\lbrace 0,1 \rbrace^{\mathbb{Z}}$ as in Section \ref{s:invariant}, $\mathcal{B}(\Omega)$ is the Borel $\sigma$-algebra, $\sigma$ is the shift map, and $\mu_{\Omega}$ the product (Borel probability) measure on $(\Omega, \mathcal{B}(\Omega))$. It is well-known \cite{Katok:95} that the metric entropy $h_{\mu_{\Omega}}(\sigma)=\log 2$. We will prove the main results by constructing a $f^{2N}$-invariant measure which has $(\Omega,\mathcal{B}(\Omega),\sigma,\mu_{\Omega})$ as a factor, by combining Lemma \ref{l:push} and Propositions \ref{p:construction}, \ref{p:invariant}, in the light of Remark \ref{r:N}.

\begin{proof}[Proof of Theorem \ref{t:main1}] Fix the smallest integer $N \geq 1$ satisfying (\ref{r:Ncond}). Let $\chi : \Omega \rightarrow \mathcal{X}_1$, $\chi(\boldsymbol{\omega})=\boldsymbol{x}^{\boldsymbol{\omega}}$, and let $\hat{\chi}: \Omega \rightarrow \hat{\mathcal{X}}_1$, $\hat{\chi}=\pi \circ \chi$, $\boldsymbol{\omega} \mapsto \hat{\boldsymbol{x}}^{\boldsymbol{\omega}}$, $\hat{\mu}_{\Omega}=\hat{\chi}^*\mu_{\Omega}$. Then by construction, $(\hat{\mathcal{X}}_1, \mathcal{B}(\hat{\mathcal{X}}_1),\hat{S}^{2N}, \hat{\mu}_{\Omega})$ is conjugate to $(\Omega,\mathcal{B}(\Omega),\sigma,\mu_{\Omega})$, thus 
\begin{equation}
 h_{\hat{\mu}_{\Omega}}(\hat{S}^{2N})=\log 2.
\end{equation}
We now construct $\nu \in \mathcal{M}(\hat{\mathcal{E}},\hat{S}^{2N})$ $\mathcal{G}$-shadowing $\hat{\mu}_{\Omega}$, where $\mathcal{G}$ is a $\sigma$-subalgebra satisfying (M1)-(M4) in the sense of Remark \ref{r:N}, to be constructed below. Recall the definition of the interval $E_0 = [a_0,b_0]$ in the proof of Lemma \ref{l:two}. Let $\hat{\mathcal{D}}_{\omega,k}$, $(\omega,k) \in \mathcal{I}$, $\mathcal{I}=\lbrace 0,1 \rbrace \times \mathbb{Z}$, be the set of all $\hat{x} \in \hat{\mathcal{X}}_1$ such that
\begin{equation*}
  \hat{x}_{2Nk} \in \begin{cases} [a_0,b_0] \mod 1 & \omega = 1, \\
 [ \tilde{z}_1-1,\tilde{z}_0] \mod 1 & \omega=0 .
 \end{cases}
\end{equation*}
Let $\mathcal{G}$ be the $\sigma$-algebra generated by $\hat{\mathcal{D}}_{\omega,k}$, $(\omega,k)\in \mathcal{I}$. By (\ref{r:E0}), for all $k \in \mathbb{Z}$, $\hat{\mathcal{D}}_{0,k}$ and $\hat{\mathcal{D}}_{1,k}$ are disjoint, and by definition closed, which yields (M3). Let
\begin{align*}
\mathcal{M}_1 &=\lbrace \hat{\boldsymbol{x}}^{\boldsymbol{\omega}}, \: \boldsymbol{\omega}\in \Omega \rbrace, \\
 \hat{\mathcal{D}}_{\hat{\boldsymbol{x}}^{\boldsymbol{\omega}}} & = \cap_{k \in \mathbb{Z}} \hat{\mathcal{D}}_{\omega_k,k}, \\
 \mathcal{M}_2 & = \cup_{\boldsymbol{\omega} \in \Omega} \hat{\mathcal{D}}_{\hat{\boldsymbol{x}}^{\boldsymbol{\omega}}}.
\end{align*}
It is straightforward to verify (M1),(M2). Let $\hat{\mathcal{C}}_{\boldsymbol{\omega}}=\pi (\mathcal{A}_{\boldsymbol{\omega}} \cap \mathcal{B}_{\boldsymbol{\omega}})$. By Proposition \ref{p:invariant}, $\hat{\mathcal{C}}_{\boldsymbol{\omega}}$ is $\hat{\xi}$-invariant for any $\boldsymbol{\omega}\in \Omega$. 
By construction  we have that $\hat{\boldsymbol{x}}^{\boldsymbol{\omega}} \in \hat{\mathcal{C}}_{\boldsymbol{\omega}}$, and by Remark \ref{rem:E0} and the definition of the sets we have $\hat{\mathcal{C}}_{\boldsymbol{\omega}} \subset \hat{\mathcal{D}}_{\hat{\boldsymbol{x}}^{\boldsymbol{\omega}}}$ which yields (M4).

Now by Proposition \ref{p:construction} we establish existence of an ergodic $\nu \in \mathcal{M}(\hat{\mathcal{E}},\hat{S}^{2N})$ which $\mathcal{G}$- shadows $\hat{\mu}_{\Omega}$, thus by Lemma \ref{l:entropy}, $h_{\nu}(\hat{S}^{2N})\geq \log 2$. By Lemma \ref{l:push} and the variational principle for metric and topological entropy \cite[Theorem 4.5.3]{Katok:95}, we deduce that $h_{\text{top}}(f^{2N}) \geq \log 2$. To complete the proof, it suffices to use $h_{\text{top}}(f^{2N})=2N h_{\text{top}}(f)$ \cite[Proposition 3.1.7]{Katok:95} and (\ref{r:Ncond}).
\end{proof}

Recall now the definition of the $L_1$-Wasserstein distance of measures, required in the proof below. Let $d$ be the canonical metric on $\mathbb{S}^1 \times \mathbb{R}$, and $\mu$, $\nu$ be two Borel probability measures on $\mathbb{S}^1 \times \mathbb{R}$. Then 
$$
W_1 (\mu,\nu) = \inf_{\gamma \in \Gamma(\mu,\nu)} \int_{(\mathbb{S}^1 \times \mathbb{R}) \times (\mathbb{S}^1 \times \mathbb{R})} d(x,y) d \gamma(x,y),
$$
where $\Gamma(\mu,\nu)$ is the collection of all Borel probability measures on $(\mathbb{S}^1 \times \mathbb{R}) \times (\mathbb{S}^1 \times \mathbb{R})$ whose marginals are $\mu$, respectively $\nu$. It is well-known that $W_1$ induces weak$^*$-topology on the set of probability measures on $\mathbb{S}^1 \times \mathbb{R}$.

\begin{proof}[Proof of Theorem \ref{t:main2}]
Let $N \geq 1$ be the smallest integer satisfying (\ref{r:Ncond}), and construct $\nu_n \in \mathcal{M}(\hat{\mathcal{E}},\hat{S}^{2nN})$ as in the proof of Theorem \ref{t:main1}, with $nN$ instead of $N$, $n \geq 1$ an integer. Then by construction and Lemma \ref{l:push}, the measure
\begin{equation*}
\mu_n= \dfrac{1}{2nN}\sum_{k=-nN}^{nN-1}\hat{i}^*(S^k)^*\nu_n
\end{equation*}
is a $f$-invariant measure. Choose the following coupling
$$
 \dfrac{1}{2nN}\sum_{k=-nN}^{nN-1} \left( \hat{i}^*(S^k)^*\nu_n \right) \otimes \delta{(y_0,p_0)},
$$
and insert in the definition of $W_1$. We obtain the bound (\ref{r:wasserstein}) by using the definition $\mathcal{A}_{\boldsymbol{\omega}}$ and the bounds (\ref{r:condhyp}). We omit the details of the routine calculation.
\end{proof}

\section{Remarks and examples} \label{s:remarks}

\noindent 1. The condition (A3) could be replaced with a weaker one of convergence at infinity uniformly in $x$ of $\lim_{|y| \rightarrow \infty} V(x,x+y)= \infty$. Indeed, in construction we need values of $V(x,y)$ and its derivatives only on the set $|x-y| \leq 2$; we can always modify $V$ outside this set so that (A1-3) hold.

\vspace{1ex}

\noindent 2. The condition (N1), assumed for simplicity of the statements and the proofs, is not needed for Theorems \ref{t:main1} and \ref{t:main2} to hold. Hyperbolicity of $(y_0,p_0)$ is used in (\ref{r:condhyp}) and implicitly in the fact that $|u_j|$ used in the definition of $\mathcal{N}(e)$ is in $l^2(\mathbb{Z})$. The condition (\ref{r:condhyp}) could be replaced with a more general one of finding $N$ such that $|\tilde{z}_{-N}-y_0| \ll \Delta_1$, $|\tilde{z}_N-y_0-1| \ll \Delta_1$; the definition of the $\mathcal{N}(e)$ should then be appropriately adjusted (we omit the details). Uniqueness of the action-minimizing fixed point is also not needed: one could construct required invariant measures in a gap between two action-minimizing fixed points as long as (N2) holds.

\vspace{1ex}

\noindent 3. In the case of the Standard map, it is elementary to check that if $k \neq 0$, then $(0,0)$ is the unique action-minimizing fixed point. The fact that (N2) holds for small $|k|>0$ follows from the result by Lazutkin and Gelfreich that the angle of splitting of separatrices is $>0$ whenever $k\neq 0$ \cite{Gelfreich:99}, Remarks \ref{r:41}, \ref{r:43} and Corollary \ref{c:25}. For larger $k$, it can be established e.g. by applying the criteria of Angenent \cite{Angenent:92} for non-existence of invariant circles.

\vspace{1ex}

\noindent 4. Our definition of the Peierls barrier differs from the one chosen by Aubry and Mather \cite{Aubry:88,Gole:01,Sorrentino:10} in the case of rational rotation numbers, as they consider difference in action between the minimax periodic and the minimizing periodic orbit. It is, however, the same in the case of irrational rotation numbers, to which our approach generalizes without much difficulty (see 9.7 below).

\vspace{1ex}

\noindent 5. In \cite{MacKay:88}, the flux through a gap of a Mather set was considered for irrational rotation numbers only, in which case the Mather set is a Cantori whenever it has a gap. We use the fact that the definition of $\Delta_0$, as used here, extends naturally also to rational rotation numbers, in this paper $\rho=0$.

\vspace{1ex}

\noindent 6. One can relatively easily show by elementary methods that in the case of the Standard map, for $\boldsymbol{h} \in \cup_{0 \leq e \leq \Delta_0} \mathcal{N}(e)$, we have $E(\boldsymbol{h})=-k \cos (2\pi (z_0+h_0))/2\pi + O(1)$, $||\nabla E(\boldsymbol{h})||_{l^2(\mathbb{Z})}= k |\sin (2\pi (z_0+h_0))| + O(1)$, which yields $\Delta_1= k^2 + O(k)$.

\vspace{1ex}

\noindent 7. The results of the paper can be extended to irrational rotation numbers. Analogously to the approach in Section \ref{s:positivity}, one can show that if the Mather set with the rotation number $\rho \in \mathbb{R} \setminus \mathbb{Q}$ is a Cantori, then $\Delta_1(\rho) >0$. We conjecture that the topological entropy can then be bounded from below by an expression depending on $\Delta_1(\rho)$, on number-theoretical properties of $\rho$ (related to recurrence properties and return times of the dynamics restricted to the Cantori), and on the size of the paratingent cones of the Mather set as a generalized notion of hyperbolicity of Mather sets in line with the results of M.-C. Arnaud \cite{Arnaud:11}.

\section*{Appendix A: proof of Proposition \ref{p:construction}}

Assume all the notation from Section \ref{s:construction}. The proof of Proposition \ref{p:construction} is in two steps: we first construct a measure $\nu \in \mathcal{M}(\hat{\mathcal{E}})$ by applying the LaSalle principle for gradient-like flows, and then show that it indeed $\mathcal{G}$-shadows $\mu$.

Let $\hat{\xi}^*$ be the pushed semi-flow on $\mathcal{M}(\hat{\mathcal{X}}_K)$. It is continuous and well-defined (i.e. $\hat{\xi}^{*t} \in \mathcal{M}(\hat{\mathcal{X}}_K)$ for $t \geq 0$), as $\hat{\xi}$ and $\hat{S}$ commute and as $\hat{\mathcal{X}}_K$ is $\hat{\xi}$-invariant.

\begin{lemma}
	(i) The semi-flow $\hat{\xi}^*$ is gradient-like, with the Lyapunov function
	$$ L(\mu)=\int_{\hat{\mathcal{X}}_K}V(x_0,x_1)d\mu(\boldsymbol{x}),$$
	$L$ continuous and bounded.
	
	(ii) The set of equilibria of $\hat{\xi}^*$ is $\mathcal{M}(\hat{\mathcal{E}}\cap \hat{\mathcal{X}}_K)$.
	
	(iii) For each $\mu$ in $\mathcal{M}(\hat{\mathcal{X}}_K)$, the $\omega$-limit set $\omega(\mu)$ with respect to $\hat{\xi}^*$ is a non-empty subset of $\mathcal{M}(\hat{\mathcal{E}}\cap \hat{\mathcal{X}}_K)$.
\end{lemma}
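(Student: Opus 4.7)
The plan is to prove parts (i)--(iii) by establishing a single lattice Lyapunov identity and then invoking LaSalle's invariance principle.

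First I set up the continuity and boundedness of $L$. By (A1), the map $(x_0,x_1)\mapsto V(x_0,x_1)$ descends to $\hat{\mathcal{X}}_K$, and on this quotient one may pick representatives with $x_0 \in [0,1)$, hence by the $\mathcal{X}_K$-constraint also $x_1 \in [-K,K+1]$; on this compact strip $V$ is continuous and bounded by (A3). The evaluation $[\boldsymbol{x}]\mapsto V(x_0,x_1)$ is thus a bounded continuous function on $\hat{\mathcal{X}}_K$ in the product topology, which makes $L(\mu)=\int V(x_0,x_1)\,d\mu$ bounded and weak-$*$ continuous on $\mathcal{M}(\hat{\mathcal{X}}_K)$.

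For the Lyapunov property I fix an $\hat{S}$-invariant $\mu$ (as in Proposition \ref{p:construction}) and note that $\hat{\xi}^{*t}\mu$ remains $\hat{S}$-invariant because $\hat{\xi}$ and $\hat{S}$ commute. Writing $a_j(\boldsymbol{x}):=V_2(x_{j-1},x_j)+V_1(x_j,x_{j+1})$, so that (\ref{r:flow}) reads $\dot x_j = -a_j$, the chain rule together with differentiation under the integral (justified by (A3) and the compactness of $\hat{\mathcal{X}}_K$) gives
$$
\frac{d}{dt} L(\hat{\xi}^{*t}\mu) = -\int \bigl[ V_1(x_0,x_1)\,a_0 + V_2(x_0,x_1)\,a_1 \bigr]\,d\mu_t .
$$
Expanding $a_0$ and $a_1$ produces two cross terms $V_1(x_0,x_1)V_2(x_{-1},x_0)$ and $V_2(x_0,x_1)V_1(x_1,x_2)$ that coincide under a unit shift, together with $V_1(x_0,x_1)^2$ that pairs with $V_1(x_1,x_2)^2$ after a unit shift; using $\hat{S}$-invariance of $\mu_t$ to carry out these identifications collapses the right-hand side to $-\int a_1^2\,d\mu_t \leq 0$. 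This lattice analogue of the energy-dissipation identity proves (i).

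Parts (ii) and (iii) are then standard Lyapunov arguments. If $\mu$ is $\hat{\xi}^*$-invariant, then $(d/dt)L \equiv 0$ on its $\hat{\xi}^*$-orbit, forcing $a_1=0$ $\mu$-a.e.; by $\hat{S}$-invariance $a_j=0$ $\mu$-a.e.\ for every $j$, so $\mathrm{supp}\,\mu \subset \hat{\mathcal{E}}$ after modifying on a null set, while conversely any measure on $\hat{\mathcal{E}}$ is trivially an equilibrium since $\hat{\xi}^t$ is the identity there. For (iii), $\hat{\mathcal{X}}_K$ is compact in the product topology (nearest-neighbour difference bound plus the quotient by $\mathbb{Z}$), hence $\mathcal{M}(\hat{\mathcal{X}}_K,\hat{S})$ is weak-$*$ compact and $\omega(\mu)$ is nonempty; monotonicity and boundedness of $L$ give $L(\hat{\xi}^{*t}\mu) \downarrow c$, and weak-$*$ continuity of $L$ combined with the $\hat{\xi}^*$-invariance of $\omega(\mu)$ then forces $L$ to remain equal to $c$ along the $\hat{\xi}^*$-orbit of any $\mu_\infty \in \omega(\mu)$, so $\mu_\infty$ is an equilibrium and by (ii) lies in $\mathcal{M}(\hat{\mathcal{E}} \cap \hat{\mathcal{X}}_K)$.

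The main obstacle is the telescoping computation in the second paragraph: one must choose the correct per-cell Lyapunov functional so that the formal "divergence" terms arising from the ill-defined infinite sum $\sum_j V(x_j,x_{j+1})$ really do cancel under the $\hat{S}$-average. Once this identity is in hand, the remainder is a direct application of LaSalle to a continuous semiflow on a compact metric space of measures.
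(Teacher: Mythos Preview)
Your proof is correct and follows essentially the same route as the paper: differentiate $L$ under the integral, expand using $\dot{x}_j=-a_j$, and use $\hat{S}$-invariance of $\mu_t$ to telescope the cross terms into a perfect square, then invoke LaSalle. The only cosmetic difference is that your shift lands on $-\int a_1^2\,d\mu_t$ whereas the paper shifts the other summand to obtain $-\int a_0^2\,d\mu_t$; these coincide by $\hat{S}$-invariance, and your slightly more explicit treatment of (iii) just spells out the LaSalle principle that the paper cites by name.
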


\begin{proof}
	First note that $\mathcal{M}(\hat{\mathcal{X}}_K)$ is a compact metrizable space (with metric e.g. the Wasserstein $L_1$-distance), thus standard results for continuous semiflows apply.
	
	As because of (A1), $V(x_0+n,x_1+n)=V(x_0,x_1)$ for an integer $n$, $L$ is well-defined, i.e. the integrand is independent of a representative of $\boldsymbol{x} \in \hat{\mathcal{X}}$ in $\mathcal{X}$ (this also holds for all the integrands in the calculation below). Continuity of $L$ follows from the definition of weak$^*$-topology, and boundedness from boundedness of $\boldsymbol{x} \mapsto V(x_0,x_1)$ on compact $\hat{\mathcal{X}}_K$.
	
	Differentiating with respect to $t$, we get
	\begin{align*}
	\frac{d L(\mu(t))}{dt} & = \int_{\hat{\mathcal{X}}_K}\left( V_1(x_0,x_1)\dot{x}_0 + V_2(x_0,x_1)\dot{x}_1 \right) d\mu(t)(\boldsymbol{x}) \\
	 & = - \int_{\hat{\mathcal{X}}_K}V_1(x_0,x_1)(V_2(x_{-1},x_0)+V_1(x_0,x_1))d\mu(t)(\boldsymbol{x}) \\
	 & \hspace{20ex} - \int_{\hat{\mathcal{X}}_K}
	V_2(x_0,x_1)(V_2(x_0,x_1)+V_1(x_1,x_2))  d\mu(t)(\boldsymbol{x}) \\
	& = - \int_{\hat{\mathcal{X}}_K}V_1(x_0,x_1)(V_2(x_{-1},x_0)+V_1(x_0,x_1))d\mu(t)(\boldsymbol{x}) \\
	& \hspace{20ex} - \int_{\hat{\mathcal{X}}_K}
	V_2(x_{-1},x_0)(V_2(x_{-1},x_0)+V_1(x_0,x_1)) d\mu(t)(\boldsymbol{x}) \\
	& = - \int_{\hat{\mathcal{X}}_K}(V_2(x_{-1},x_0)+V_1(x_0,x_1))^2 d\mu(t)(\boldsymbol{x}) = - \int_{\hat{\mathcal{X}}_K}\dot{x}_0(t)^2 d\mu(t)(\boldsymbol{x}),
	\end{align*}
where in the first row we swapped the derivative and the integral (possible because of (A1-3) and compactness of $\hat{\mathcal{X}}_K$); and in the third row we applied $\hat{S}$-invariance of $\mu(t)$. We see that $dL(\mu(t))/dt \leq 0$. Applying again $\hat{S}$-invariance to the last row, we deduce that $dL(\mu(t))/dt= 0$ if and only if $\mu(t)(\hat{\mathcal{E}})=0$, which is equivalent to $\mu(t)$ being an equilibrium of $\hat{\xi}^*$, which gives (i) and (ii). (iii) now follows from the LaSalle principle for continuous gradient-like semiflows on compact metric spaces.
\end{proof}

\begin{lemma} Assume (M1)-(M4). Then any $\nu \in \omega(\mu)$ $\mathcal{G}$-shadows $\mu$.
\end{lemma}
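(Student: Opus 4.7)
The plan is to pass from the weak$^*$ convergence $\hat{\xi}^{*t_n}\mu\to\nu$ (for some $t_n\to\infty$ realizing $\nu\in\omega(\mu)$) to set-wise measure equality on $\mathcal{G}$ and then to the factor relation. The dynamical input comes entirely from (M4); the rest is measure-theoretic bookkeeping using (M1)--(M3) and the portmanteau theorem. The first step is a one-sided inequality on closed $\mathcal{G}$-sets: (M4) rephrases as $\mathcal{D}\cap\mathcal{M}_1\subseteq\hat{\xi}^{-t}(\mathcal{D})$ for every $\mathcal{D}\in\mathcal{G}$ and $t\geq 0$, and since $\mu(\mathcal{M}_1)=1$ this yields $\mu(\mathcal{D})\leq(\hat{\xi}^{*t}\mu)(\mathcal{D})$ uniformly in $t$; for closed $\mathcal{D}$ the portmanteau theorem applied along $\hat{\xi}^{*t_n}\mu\to\nu$ then gives $\nu(\mathcal{D})\geq\limsup_n(\hat{\xi}^{*t_n}\mu)(\mathcal{D})\geq\mu(\mathcal{D})$.

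Next I would promote this inequality to equality on generators using (M3). Every generator $\mathcal{D}_{i_1}$ lies in a countable disjoint family $\{\mathcal{D}_{i_n}\}_{n\geq 1}$ of closed generators with $\mu(\bigcup_n\mathcal{D}_{i_n})=1$. Applying Step~1 to each $\mathcal{D}_{i_n}$ and summing forces
\[
1=\sum_n\mu(\mathcal{D}_{i_n})\leq\sum_n\nu(\mathcal{D}_{i_n})\leq 1,
\]
so equality must hold term by term: $\mu(\mathcal{D}_{i_n})=\nu(\mathcal{D}_{i_n})$ for every $n$, and in particular $\nu\bigl(\bigcup_n\mathcal{D}_{i_n}\bigr)=1$.

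Equality on generators propagates to all of $\mathcal{G}$ by a monotone-class / Dynkin argument, establishing the set-wise shadowing condition $\mu(\mathcal{D})=\nu(\mathcal{D})$ for every $\mathcal{D}\in\mathcal{G}$. For the factor relation, the previous step combined with (M1) yields $\nu(\mathcal{M}_2)=1$: each member of the disjoint full-$\mu$-measure family from (M3) intersects $\mathcal{M}_1$ in a union of atoms $\mathcal{D}_q\subseteq\mathcal{M}_2$, and the $\nu$-measures match those of $\mu$ by Step~2. The map $\hat{\theta}:\mathcal{M}_2\to\mathcal{M}_1$ from (M2) is then Borel, intertwines $\hat{S}$ by the last clause of (M1), and the identity $\hat{\theta}^{-1}(\mathcal{D}\cap\mathcal{M}_1)=\mathcal{D}\cap\mathcal{M}_2$ together with the set-wise equality gives $\hat{\theta}_*\nu=\mu$, exhibiting $\mu$ as a factor of $\nu$.

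The main obstacle is this last step: the generating family from (M3) need not be closed under finite intersections, so the standard Dynkin theorem does not directly apply; one must bootstrap the extension via (M3)'s disjoint full-measure decompositions (or propagate equality atomwise along the partition $\{\mathcal{D}_q\}_{q\in\mathcal{M}_1}$), and verifying $\nu(\mathcal{M}_2)=1$ requires the same compatibility between the $\mathcal{D}_i$-partitions of (M3) and the $\mathcal{D}_q$-atoms of (M1). Everything else is a routine combination of the portmanteau inequality with $\hat{\xi}$-invariance of $\mathcal{G}$-sets modulo $\mu$-null sets.
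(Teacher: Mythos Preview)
Your proposal is correct and follows essentially the same route as the paper: use (M4) together with $\mu(\mathcal{M}_1)=1$ to obtain the one-sided inequality $(\hat{\xi}^{*t}\mu)(\mathcal{D})\geq\mu(\mathcal{D}\cap\mathcal{M}_1)=\mu(\mathcal{D})$, apply the portmanteau theorem on the closed generators $\mathcal{D}_i$ from (M3), upgrade to equality via the disjoint full-measure family in (M3), and then pass to the factor map $\hat{\theta}$ using $\hat{\theta}^{-1}(\mathcal{D}_i\cap\mathcal{M}_1)=\mathcal{D}_i$. The paper is terser on the final extension from generators to all of $\mathcal{G}$ and on verifying $\nu(\mathcal{M}_2)=1$, whereas you explicitly flag the $\pi$-system issue; otherwise the arguments coincide.
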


\begin{proof}
	It suffices show that $\hat{\theta}: \mathcal{M}_2 \rightarrow \mathcal{M}_1$ defined by (M2) is the factor map, and that for all $\mathcal{D} \in \mathcal{G}$, $\mu(\mathcal{D})=\nu(\mathcal{D})$. By (M2), $\hat{\theta}$ is measurable, and by (M1), $\hat{\theta} \circ \hat{S} = \hat{S} \circ \hat{\theta}$ on $\mathcal{M}_2$. Denote by $\mu(t)=\hat{\xi}^{*t}\mu$. By (M4), $\hat{\xi}^t(\mathcal{M}_1 \cap \mathcal{D})\subset  \mathcal{D}$, thus by definition 
	\begin{equation}
	\mu(t)(\mathcal{D}) \geq  \mu(\mathcal{M}_1 \cap \mathcal{D}). \label{r:geq}
	\end{equation}
	
	Take any $\mathcal{D}_i$, $i \in \mathcal{I}$ as in (M3). As $\mathcal{D}_i$ is closed, by the characterization of the weak$^*$ convergence, and as $\mu(t_n) \rightarrow \nu$ for some sequence $t_n$, we have that 
	\begin{equation}
	\nu(\mathcal{D}_i) \geq \limsup_{n \rightarrow \infty}\mu(t_n)(\mathcal{D}_i) \geq \mu(\mathcal{M}_1 \cap \mathcal{D}_i). \label{r:geq2}
	\end{equation}
		
	We now show that $\mu(\mathcal{M}_1 \cap \mathcal{D}_i) = \mu(\mathcal{D}_i)$ for any $i \in \mathcal{I}$. Indeed, by (M3) and $\mu(\mathcal{M}_1)=1$ given in (M1),
	$$
	1 = \mu \left( \cup_{j =1 }^{\infty} \mathcal{D}_{i_j} \right) = \sum_{j =1 }^{\infty} \mu(\mathcal{D}_{i_j}) \geq \sum_{j =1 }^{\infty} \mu(\mathcal{D}_{i_j} \cap \mathcal{M}_1) =\mu \left( \left( \cup_{j =1 }^{\infty} \mathcal{D}_{i_j}\right)\cap \mathcal{M}_1 \right) =1,
	$$
	thus equality must hold everywhere. Analogously from (\ref{r:geq2}) we obtain 
	$$
	1 \geq \nu \left( \cup_{j =1 }^{\infty} \mathcal{D}_{i_j} \right) = \sum_{j =1 }^{\infty} \nu(\mathcal{D}_{i_j}) \geq \sum_{j =1 }^{\infty} \mu(\mathcal{D}_{i_j} \cap \mathcal{M}_1) =\mu \left( \left( \cup_{j =1 }^{\infty} \mathcal{D}_{i_j}\right)\cap \mathcal{M}_1 \right) =1,
	$$
	thus $\nu(\mathcal{D}_i)=\mu(\mathcal{D}_i)=\mu(\mathcal{D}_i \cap \mathcal{M}_1)$ for all $i \in \mathcal{I}$. As by (M1) and (M2), $\hat{\theta}^{-1}(\mathcal{D}_i \cap \mathcal{M}_1)= \mathcal{D}_i$, as $(\mathcal{D}_i, i \in \mathcal{I})$ generates $\mathcal{G}$, and as $(\mathcal{D}_i \cap \mathcal{M}_1, i \in \mathcal{I})$ generates the Borel $\sigma$-algebra on $\mathcal{M}_1$, this completes the proof that $\hat{\theta}$ is a factor map, and of the shadowing property $\mu(\mathcal{D})=\nu(\mathcal{D})$.	
\end{proof}

\section*{Appendix B: proof of Lemma \ref{l:balance}}

We use the notation from the statement of Lemma \ref{r:balance}, and write $\boldsymbol{h}$, $\boldsymbol{x}$ instead of $\boldsymbol{h}(t_2)$, $\boldsymbol{x}(t_2)$. The derivative is always evaluated at $t=t_2$. 

As $\boldsymbol{z} \in \mathcal{E}$, by definition $|| \nabla E(\boldsymbol{h}) ||^2_{l^2(\mathbb{Z})}$ is the finite sum
\begin{align*}
|| \nabla E(\boldsymbol{h}) ||^2_{l^2(\mathbb{Z})}  =& (V_2(z_{-N-1},z_{-N})+V_1(z_{-N},x_{-N+1}))^2  + (V_2(z_{-N},x_{-N+1})+V_1(x_{-N+1},x_{-N+2}))^2 \\
& + \sum_{j =-N+2}^{N-1} (V_2(x_{j-1},x_j)+V_1(x_j,x_{j+1}))^2 \\
& + (V_2(x_{N-1},x_N)+V_1(x_N,z_{N+1}))^2 + (V_2((x_N,z_{N+1}))+V_1(z_{N+1},z_{N+2}))^2.
\end{align*}
Denote the first two and the last two summands by $F_1,F_2,F_3,F_4$. Differentiating carefully we see that
\begin{align*}
 \frac{dE(\boldsymbol{h})(t)}{dt}  = &-V_2(z_{-N},x_{-N+1})(V_2(x_{-N},x_{-N+1})+V_1(x_{-N+1},x_{-N+2})) \\
 & - V_1(x_{-N+1},x_{-N+2})(V_2(x_{-N},x_{-N+1})+V_1(x_{-N+1},x_{-N+2})) \\
 & - \sum_{j =-N+2}^{N-1} (V_2(x_{j-1},x_j)+V_1(x_j,x_{j+1}))^2 \\
  & -  V_2(x_{N-1},x_N)(V_2(x_{N-1},x_N)+V_1(x_N,x_{N+1}) \\
  &- V_1(x_N,z_{N+1})(V_2(x_{N-1},x_N)+V_1(x_N,x_{N+1}).
\end{align*}
Denote the first two and the last two summands by $F_5,F_6,F_7,F_8$. We now see that (\ref{r:balance}) holds with $F(t_2)=F_1+...+F_8$. Now as $\boldsymbol{z} \in \mathcal{E}$, we have that $V_2(z_{-N-1},z_{-N})=-V_1(z_{-N},z_{-N+1})$, thus
\begin{align*}
|F_1 |  & = (V_1(z_{-N},x_{-N+1})-V_1(z_{-N},z_{-N+1}))^2 \\& \leq  |V_{12}(z_{-N},c_1)||x_{-N+1}-z_{-N+1}| \cdot (|V_1(z_{-N},x_{-N+1})|+|V_1(z_{-N},z_{N+1})|) 
\\ &  \leq \frac{\kappa_2}{4}|x_{-N+1}-z_{-N+1}|,
\end{align*}
where $c_1 \in [z_{-N+1},x_{-N+1}] $ and $\kappa_2$ as in (\ref{r:balance2}).
Now by an analogous argument in the second row below, we get
\begin{align*}
|F_2+F_5+F_6| & = |V_2(z_{-N},x_{-N+1})-V_2(x_{-N},x_{-N+1}|\cdot |V_2(z_{-N},x_{-N+1})+V_1(x_{-N+1},x_{-N+2})|
\\ &  \leq \frac{\kappa_2}{4}|x_{-N}-z_{-N}|.
\end{align*}
We similarly deduce analogous bounds on $|F_4|$, $|F_3+F_7+F_8|$, and by the triangle inequality obtain (\ref{r:balance1}).

{\small
}

{\small
{\em Authors' addresses}:
{\em Sini\v{s}a Slijep\v{c}evi\'{c}}, Department of Mathematics, Bijeni\v{c}ka 30, University of Zagreb, Croatia
 e-mail: \texttt{slijepce@\allowbreak math.hr}.

}

\end{document}